\def\d{{\partial}}
\def\eps{\varepsilon}
\def\R{{\mathbb R}}
\def\N{{\mathbb N}}
\def\T{{\mathbb T}}
\def\O{\mathcal O}
\def\dd{\mathrm d}
\def\si{\sigma}
\def\IM{\operatorname{Im}}
\def\RE{\operatorname{Re}}
\DeclareMathOperator{\diver}{div}
\DeclareMathOperator{\supp}{supp}
\def\uphi{{\breve \phi}}
\def\ua{{\breve a}}
\def\Tend#1#2{\mathop{\longrightarrow}\limits_{#1\rightarrow#2}}
\def\<{{\langle}}
\def\>{{\rangle}}
\def\({\left(}
\def\){\right)}
\theoremstyle{plain}
\newtheorem{theorem}{Theorem}[section]
\newtheorem{lemma}[theorem]{Lemma}
\newtheorem{proposition}[theorem]{Proposition}
\theoremstyle{remark}
\newtheorem{remark}[theorem]{Remark}
\numberwithin{equation}{section}
\begin{document}
\title[Pathological set with loss for NLS]{Pathological set with loss
  of regularity for nonlinear 
  Schr\"odinger equations}
\address{Univ Rennes, CNRS\\ IRMAR - UMR 6625\\ F-35000
  Rennes, France}
\author[R. Carles]{R\'emi Carles}
\email{Remi.Carles@math.cnrs.fr}

\author[L. Gassot]{Louise Gassot}
\email{louise.gassot@univ-rennes1.fr}

\begin{abstract}
  We consider the mass-supercritical, defocusing, nonlinear
  Schr\"odinger equation. We prove  loss of regularity
 in arbitrarily short times  for regularized initial data belonging to a dense set of any fixed
  Sobolev space for which the nonlinearity is supercritical. The proof
  relies on the construction of initial data as a superposition of
  disjoint  bubbles at different scales. We get an approximate 
  solution with a time of existence bounded from below, provided by the compressible Euler equation, which enjoys
  zero speed of propagation. Introducing suitable renormalized modulated energy
  functionals, we prove spatially localized estimates
  which make it possible to obtain the loss of regularity. 
\end{abstract}

\thanks{The authors are supported by Centre Henri Lebesgue,
  program ANR-11-LABX-0020-0. A CC-BY public copyright license has been applied by the authors to the present document and will be applied to all subsequent versions up to the Author Accepted Manuscript arising from this submission.}

\maketitle

\section{Introduction}
\label{sec:intro}

We consider the defocusing nonlinear Schr\"odinger equation, for $m\in \N$:
\begin{equation}
  \label{eq:nls}
  i\d_t \psi+\frac{1}{2}\Delta \psi = |\psi|^{2m}\psi,\quad \psi_{\mid t=0}=f_0,
\end{equation}
where $x\in \R^d$ and $f_0 \in H^s(\R^d)$. We assume that the
nonlinearity is $L^2$-supercritical,
\begin{equation*}
  s_c=\frac{d}{2}-\frac{1}{m}>0,
\end{equation*}
and we suppose that $0<s<s_c$: the nonlinearity is $H^s$-supercritical.
For technical reasons, we also assume $s\le 2$ (which is no further
restriction when $d\le 4$). Our goal is to
improve~\cite[Theorem~1.4]{CaGa23}, recalled below, and prove some
loss on regularity in the spirit of \cite{Lebeau05}. We emphasize the
fact that our analysis remains valid for compact geometries, typically
for \eqref{eq:nls} on the torus $\T^d$.


\subsection{Context}
It is known that when $s\geq s_c$, then the Cauchy
problem~\eqref{eq:nls} is locally well-posed in $H^s(\R^d)$
(see~\cite{CW90}), whereas when $s<s_c$, then the Cauchy problem is
ill-posed, as established initially in \cite{KPV01}. In \cite{CCT2},
the notion of norm inflation was introduced, and proven in the case of
\eqref{eq:nls}: there exists a sequence of initial data
$(f^h_0)_{h\in(0,1]}$ in $ \mathcal{S}(\R^d)$ going to zero in $H^s$,
but such that the corresponding maximal solutions
$(\psi^h)_{h\in(0,1]}$ are defined on $[0,t^h]$ for some $t^h\to 0$
and $\psi^h(t^h)$ goes to infinity in $H^s$: 
\[
\|f^h_0\|_{H^s}\Tend{h}{0} 0,
\quad
\|\psi^h(t^h)\|_{H^s}\Tend{h}{0} +\infty.
\]
It turns out that this norm inflation mechanism also occurs around any
initial data, as proven initially in \cite{SuTz20a} in the case of the
wave equation (see also \cite{Tz19,Xia21}), and more recently in~\cite{Xia22}
for a fourth-order Schrödinger equation. 

The question of norm inflation becomes more delicate
when replacing the
sequence $(f_0^h)_h$ of initial data by the sequence $(\iota_h\ast
f_0)_h$, where the approximate identity $\iota_h$ is given by
\[
\iota_{h}(x)=\frac{1}{h^d}\iota\left(\frac{x}{h}\right),
\]
with $\iota\in\mathcal{C}_{c}^{\infty}(\R^d)$ and $\int_{\R^d}\iota(x)\dd x=1$.
In this case, it has been shown in~\cite[Theorem~1.4]{CaGa23}, in the case $d=3$, that there exists a dense set of functions $f_0\in H^s(\R^3)$ (called {\it pathological set}) such that norm inflation holds for this sequence of regularized initial data:
\[
\|\iota_{h_k}\ast f_0-f_0\|_{H^s}\Tend{k}{\infty} 0,
\quad
\|\psi^{h_k}(t^{h_k})\|_{H^s}\Tend{k}{\infty} +\infty.
\]
The construction of such a pathological set was first evidenced in
\cite{SuTz20} for the wave equation, then extended to Schr\"odinger
equations in~\cite{CaGa23} by removing a finite speed of propagation
argument. The result is also valid on $\T^3$.
\smallbreak

On the other hand, ill-posedness for the range of exponents $s<s_c$
was strengthened in~\cite{ACMA} as a loss of regularity result
(extending the cubic case from \cite{CaARMA}; see also
\cite{ThomannAnalytic}). More precisely, there exists a sequence of
initial data $(f^h_0)_{h\in(0,1]}\subset \mathcal{S}(\R^d)$,  global
weak solutions $(\psi^h)_{h\in(0,1]}$, and  $t^h\to 0$, such that:
\[
\|f^h_0\|_{H^s}\Tend{h}{0} 0,
\quad
\|\psi^h(t^h)\|_{H^{\si}}\Tend{h}{0} +\infty, \text{ for all }
\si>\frac{s}{1+m(s_c-s)}.
\]
This result is an analogue of the original loss of regularity theorem
from Lebeau~\cite{Lebeau05} concerning energy-supercritical wave
equations ($s_c>1$),
\[
(\partial_{tt}-\Delta)u+u^{2m+1}=0,
\]
which is as follows. There exists $f_0$ in $H^s$ and $t^{h_k}\to 0$ such that the solution $\psi$ to the wave equation satisfies that for every $\si > I(s)$, 
\[
\|\psi(t^{h_k})\|_{H^{\si}}
	\Tend{k}{+\infty} +\infty.
\]
The exponent $I(s)$ is given by $I(s)=\frac{s}{1+m(s_c-s)}$ when $s\ge
s_{\rm sob}$,
\[
  s_{\rm sob} =\frac{dm}{2m+2}\text{ being such that }\dot
  H^{s_{\rm sob}}(\R^d)\hookrightarrow L^{2m+2}(\R^d),
\]
and $I(s)=1$ when $s\le s_{\rm
  sob}$ (note that $I(s_{\rm sob})=1$). The result from
\cite{Lebeau05} uses in a crucial fashion the 
property that (weak) solutions to the nonlinear wave equation enjoy
finite speed of propagation. It is mostly because of this that the
result of \cite{ACMA} concerns a sequence of initial data (one bubble)
rather than some fixed data (superposition of disjoint bubbles) like
in \cite{Lebeau05}. In this paper, we prove a result which is
essentially the same as in \cite{Lebeau05}, by looking at the sequence
of initial data $(\iota_h\ast f_0)_h$ regularized by convolution. Of
course there is no finite speed of propagation for \eqref{eq:nls}. Instead,
our argument takes advantage of a finite propagation speed property at the
level of compressible Euler equations, which naturally appears in the
WKB analysis of the semi-classical version of \eqref{eq:nls}. 

\subsection{Main results}
\color{black}
Our first result is in the spirit of \cite[Theorem~1.33]{Tz19}, and
states that the main result from \cite{ACMA} is valid not only at the
origin, but near any initial datum in $H^s(\R^d)$:
\begin{theorem}[Loss of regularity near any initial datum]\label{thm:0}
  Let $0<s<s_c$, with $s\le 2$. For any $f_0\in H^s(\R^d)$, there
  exists a sequence $f_{0,k}\in \mathcal C_c^\infty(\R^d)$ and times
  $t_k\to 0$ as $k\to \infty$, such that
  \begin{equation*}
    \|f_0-f_{0,k}\|_{H^s} \Tend{k}{\infty} 0,
\quad
\|\psi_k(t_k)\|_{H^{\si}}\Tend{k}{\infty} +\infty,\quad \forall
\si>\frac{s}{1+m(s_c-s)}, 
\end{equation*}
where for any (fixed) $k$, $\psi_k\in L^\infty(\R;H^1\cap L^{2m+2})$
solves~\eqref{eq:nls} with initial datum $f_{0,k}$.
\end{theorem}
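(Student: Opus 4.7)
The strategy is to combine a density argument in $H^s$ with a localized version of the loss-of-regularity construction from \cite{ACMA}, using a spatial localization estimate to compensate for the absence of finite speed of propagation in \eqref{eq:nls}. This parallels the approach used for the wave equation in \cite{Tz19,SuTz20a}, but where the light-cone argument is replaced by the compressible Euler WKB approximation analyzed via a renormalized modulated energy functional, as announced in the abstract.

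First I would approximate $f_0$ in $H^s$ by a sequence $g_k\in\mathcal C_c^\infty(\R^d)$. For each $k$, I would then add a bubble $b_k$ of the type used in \cite{ACMA}, concentrated at a point $x_k$ lying far outside $\supp g_k$ at a small scale $h_k\to 0$, and with $\|b_k\|_{H^s}\le 1/k$. Setting $f_{0,k}:=g_k+b_k\in\mathcal C_c^\infty(\R^d)$ gives $\|f_0-f_{0,k}\|_{H^s}\to 0$, while global existence of $\psi_k\in L^\infty(\R;H^1\cap L^{2m+2})$ follows from the conservation of mass and (defocusing) energy. The bubble $b_k$ is calibrated so that, when evolved alone under \eqref{eq:nls}, the resulting solution $\tilde\psi_k$ obeys $\|\tilde\psi_k(t_k)\|_{H^\sigma}\gtrsim h_k^{-\delta(\sigma)}$ at a time $t_k=\mathcal O(h_k)\to 0$, with $\delta(\sigma)>0$ for every $\sigma>s/(1+m(s_c-s))$. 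This one-bubble bound is in fact saturated on a ball of radius $\mathcal O(h_k)$ around $x_k$, because the underlying WKB ansatz reduces at leading order to a compressible Euler system that enjoys zero speed of propagation on this timescale.

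The main obstacle is to transfer the lower bound from $\tilde\psi_k$ to the true solution $\psi_k$ issued from $g_k+b_k$: since \eqref{eq:nls} has infinite speed of propagation, the smooth background $g_k$ could a priori contaminate the ball where the bubble concentrates. I would address this via the renormalized modulated energy functional, comparing $\psi_k$ to the one-bubble WKB profile on $B(x_k,Ch_k)$ over $[0,t_k]$. Because $g_k$ is smooth and uniformly bounded in $H^1\cap L^{2m+2}$ while $b_k$ is highly oscillatory and concentrated at scale $h_k$, the contribution of $g_k$ to the modulated energy over this small ball is lower order in $h_k$, whereas the bubble's contribution is singular; one thus recovers, on $B(x_k,Ch_k)$, the same inflation rate for $\|\psi_k(t_k)\|_{H^\sigma}$ as for $\tilde\psi_k$, up to absolute constants.

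Finally, picking $h_k\to 0$ fast enough (e.g.\ $h_k\le 2^{-k}$) ensures both $\|f_0-f_{0,k}\|_{H^s}\to 0$ and $\|\psi_k(t_k)\|_{H^\sigma}\to+\infty$ for every $\sigma>s/(1+m(s_c-s))$, since for each such $\sigma$ the rate $h_k^{-\delta(\sigma)}$ diverges. Note that no single $\sigma$-uniform argument is needed: the statement is pointwise in $\sigma$, so a single choice of $h_k$ works for all admissible $\sigma$'s simultaneously.
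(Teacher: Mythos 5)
Your proposal matches the paper's own argument in Appendix B: approximate $f_0$ by a smooth compactly supported $g_{0,k}$, add a single disjointly supported bubble $\varphi_k$ at scale $h_k$ whose $H^s$ norm vanishes as $k\to\infty$, rescale semiclassically, and run the renormalized modulated energy estimate with the ``low mode'' coming from $g_{0,k}$ absorbed into the renormalization, then conclude via the localized $\dot H^\sigma$ lower bound. The only cosmetic differences are that the paper compares $u_k$ to the compressible-Euler WKB profile (not to the one-bubble NLS evolution), and that global existence for $s_c>1$ rests on weak solutions from Ginibre--Velo rather than on conservation laws alone; neither affects the structure of your argument.
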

As a corollary of this result, in the energy-supercritical case
$s_c>1$, taking $s=s_{\rm sob}$ yields $f_{0,k}$ converging to
$f_0$ in the energy space $H^1\cap L^{2m+2}$, while $\psi_k$ is
instantaneously unbounded in $H^\si$ for any $\si>1$, since $I(s_{\rm
  sob})=1$. 
Like in \cite{SuTz20}, one may ask how the above phenomenon depends on
the approximating sequence $(f_{0,k})_k$, and consider a general
approximate identity. Our main result is as follows, and somehow
unites the results from \cite{ACMA,CaARMA,ThomannAnalytic} on the one
hand, and \cite{CaGa23} on the other hand:
\color{black}
\begin{theorem}[Loss of regularity for regularized data]\label{thm:main}
Let $0<s<s_c$, with $s\le 2$. There exists a dense pathological set of initial data $f_0$ in $H^{s}(\R^d)$ such that the following holds.
For $h>0$, let $\psi^h$ be the solution to~\eqref{eq:nls} with initial data $\iota_{h}\ast f_0$.
There exist a sequence of parameters $h_k\to 0$ and times $t_k\to
0$ as $k\to\infty$, such that 
\[
\|\iota_{h_k}\ast f_0-f_0\|_{H^s}\Tend{k}{\infty} 0,
\quad
\|\psi^{h_k}(t_k)\|_{H^{\si}}\Tend{k}{\infty} +\infty,\quad \forall \si>\frac{s}{1+m(s_c-s)}.
\]
\end{theorem}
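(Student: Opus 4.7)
The approach combines the WKB/bubble construction of \cite{ACMA} with the disjoint-bubbles density argument of \cite{CaGa23,SuTz20}. Given a target $g\in H^s(\R^d)$ and $\varepsilon>0$, I would take as initial data an arbitrarily close smooth approximant $g_\varepsilon\in\mathcal C_c^\infty(\R^d)$, plus a superposition of compactly supported WKB bubbles at a sequence of scales $h_k\to 0$ decreasing very fast, centered at pairwise distant points $x_k$:
\[
 f_0(x)=g_\varepsilon(x)+\sum_{k\ge 1}\alpha_k\, h_k^{s-d/2}\,A\!\left(\frac{x-x_k}{h_k}\right)\exp\!\left(\frac{i\phi((x-x_k)/h_k)}{h_k}\right),
\]
with $(A,\phi)\in\mathcal C_c^\infty(\R^d)^2$ a fixed profile and $\alpha_k\to 0$. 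The scaling $h_k^{s-d/2}$ keeps each bubble of bounded $H^s$-norm; density in $H^s$ then follows by letting $\varepsilon\to 0$ and choosing $(\alpha_k)$ in $\ell^2$ with arbitrarily small norm. The locations $x_k$ and radii are chosen so that the bubbles, together with their small-time evolutions under the compressible Euler flow, stay pairwise disjoint.

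\textbf{Semi-classical reduction for a single scale.} Fix $k$ and set $h=h_k$. After convolution with $\iota_h$, the component of $f_0$ at scale $h_k$ survives essentially unchanged, since $\iota$ is Schwartz and convolution at the matching scale merely reshapes amplitude and phase in a controlled way; components at scales $h_j\gg h_k$ look smooth on scale $h_k$ (no oscillation), whereas bubbles at scales $h_j\ll h_k$ are smoothed out and contribute negligibly, provided $(h_k)$ decays sufficiently fast. Zooming in near $x_k$, the equation \eqref{eq:nls} for $\psi^{h_k}$ thus reduces, up to small perturbations, to a semi-classical NLS with a single WKB bubble as data, in the spirit of \cite{ACMA}.

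\textbf{Localized modulated energy and Euler.} The leading-order WKB ansatz $\ua e^{i\uphi/h}$ obeys the defocusing compressible Euler system
\begin{equation*}
\d_t\uphi+\tfrac12|\nabla\uphi|^2+|\ua|^{2m}=0,\qquad \d_t(|\ua|^2)+\diver\!\left(|\ua|^2\nabla\uphi\right)=0,
\end{equation*}
which, for compactly supported $(\ua,\nabla\uphi)$, enjoys zero speed of propagation on a uniform time interval $[0,T]$, $T>0$ independent of $h$. The main task is to justify the WKB approximation \emph{spatially}, on a ball $B$ centered at $x_k$ containing the evolved support of the $k$-th bubble but disjoint from the supports of all other bubbles and from the (slowly varying) contribution of $g_\varepsilon$. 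This calls for a renormalized modulated energy functional, truncated in space, in which a background subtraction absorbs the far-field contributions that would otherwise make a naive modulated energy diverge. Closing the estimate uses the defocusing sign of the nonlinearity together with the disjointness kept alive by the finite-propagation property of Euler; it should yield $\|\psi^{h_k}(t)-\ua_h e^{i\uphi_h/h}\|_{L^2(B)}=o(1)$ uniformly on $[0,T]$ in a suitable sense.

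\textbf{Conclusion and main obstacle.} Once the WKB approximation is validated locally, the loss of regularity follows from the known blow-up in $H^\sigma$ of the WKB profile at a well-chosen time $t_k\in(0,T)$: by \cite{Lebeau05,ACMA}, $\|\ua_{h_k}e^{i\uphi_{h_k}/h_k}\|_{H^\sigma}\Tend{k}{\infty}+\infty$ for every $\sigma>s/(1+m(s_c-s))$, and the disjointness of supports transfers this to $\psi^{h_k}(t_k)$. The principal obstacle is the design and closure of the spatially localized renormalized modulated energy: the classical modulated-energy method is global and naive cut-offs generate commutator terms controlled only by the full (large) norms of the ansatz, so one has to carefully thread the truncation between the WKB profile and the far field and to renormalize the energy so that the background bubbles and $g_\varepsilon$ do not contaminate the estimate. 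This is the technical heart of the argument and the main novelty compared with \cite{CaGa23,ACMA}.
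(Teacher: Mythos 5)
Your overall strategy (superposition of disjoint bubbles at scales $h_k\to 0$, semiclassical reduction at each scale, a spatially localized/renormalized modulated energy functional, and zero speed of propagation for the limiting Euler system) matches the skeleton of the paper's argument. However, there is a genuine and consequential error in the construction of the initial data, and the part you describe as the ``main obstacle'' is left open.

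\textbf{The initial data is not in $H^s$.} You propose bubbles of the form
\[
\alpha_k\, h_k^{s-d/2} A\!\left(\tfrac{x-x_k}{h_k}\right)\exp\!\left(\tfrac{i\phi((x-x_k)/h_k)}{h_k}\right),
\]
and claim the scaling $h_k^{s-d/2}$ keeps each bubble of bounded $H^s$-norm. This is only true \emph{without} the oscillatory phase. By scaling, $\big\|h^{s-d/2}A(\cdot/h)e^{i\phi(\cdot/h)/h}\big\|_{\dot H^s}=\big\|A\,e^{i\phi/h}\big\|_{\dot H^s}\sim h^{-s}$, because every derivative falling on $e^{i\phi/h}$ contributes a factor $1/h$. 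So the $H^s$-norm of the $k$-th bubble is of order $\alpha_k h_k^{-s}$, and with $h_k=e^{-M^k}$ any summable choice of $\alpha_k$ that makes $f_0\in H^s$ must satisfy $\alpha_k\lesssim h_k^s$. But then repeating the inflation estimate gives at best $\|\psi^{h_k}(t_k)\|_{\dot H^\sigma}\gtrsim \alpha_k h_k^{s-\sigma}\varepsilon_k^{-\sigma}\lesssim h_k^{2s-\sigma(1+m(s_c-s))}$, which blows up only for $\sigma>2s/(1+m(s_c-s))$, missing the target threshold by a factor of two. The paper's construction has \emph{no} initial phase: the bubble is $\frac{1}{|\log h_k|}h_k^{s-d/2}\alpha_k(x/h_k)$ with a real profile, hence $O(1/|\log h_k|)$ in $H^s$, and the hydrodynamic system is started with $\phi_{\mid t=0}=0$. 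The $\varepsilon_k$-oscillations are \emph{generated dynamically} by the defocusing nonlinearity (that is the content of the modulated-energy argument, going back to \cite{CaARMA,ACMA}); putting the oscillation into the data both breaks the $H^s$ bound and bypasses the mechanism that produces the sharp exponent.

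\textbf{The localized/renormalized energy is asserted, not constructed.} You correctly identify that a naive modulated energy diverges because of the other bubbles and the slowly varying background, and that one must ``carefully thread the truncation'' and renormalize, but you do not say how. In the paper this is precisely where the work is: the kinetic part is renormalized by subtracting the time-independent gradient of the low modes, $\tilde K_k=\tfrac12\|(\eps_k\nabla -i\tilde V_k)u_k-\eps_k\nabla\varphi\|_{L^2}^2$ with $\varphi=\sum_{\ell<k}\jmath\ast\varphi_{\ell,k}$; the key cancellations then require quantitative smallness of $\tilde V_k$ and its derivatives away from the $k$-th bubble (Lemma~\ref{lem:V-loc}), a localized mass estimate (Lemma~\ref{lem:loc-mass}), and a localized $\dot H^1$ lower bound (Lemma~\ref{lem:H1}). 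Without these, the commutator terms from the cut-off really are uncontrolled, and the Gronwall argument does not close. Also note the paper explicitly does \emph{not} justify WKB for $u_k$ (see Remark~\ref{rem:WKB}): because $u_{0,k}$ is unbounded in $L^2$ (and $\eps_k\nabla u_{0,k}$ can be unbounded if $s>s_{\rm sob}$), only the weaker modulated-energy control, not full WKB validity, is available; your claim to ``justify the WKB approximation spatially'' is stronger than what is proved and most likely not achievable by these methods.
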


\begin{remark}
We shall see in the proof that this result is also valid in $\T^d$.
\end{remark}

\begin{remark}
  Note that as mentioned in~\cite[Theorem~1.4 and Proposition 2.10]{CaGa23}, as soon as the Cauchy problem is globally well-posed in $H^k(\R^d)$ for some $k>\frac{d}{2}$, then the set of initial data $f_0$ satisfying Theorem~\ref{thm:main} contains a dense $G_{\delta}$ set.
 \end{remark}

The pathological set of initial data, on which norm inflation or loss
of regularity happens, is the counterpart of the set of initial data
such that probabilistic well-posedness holds, initiated by
Bourgain~\cite{Bou94,Bou96} for the cubic Schrödinger equation on
$\T^2$, then developed by Burq and Tzvetkov~\cite{BT08a,BT08b} for the
cubic wave equation on manifolds, \textcolor{black}{ and by Burq,
  Thomann and Tzvetkov \cite{BuThTz13} for the nonlinear Schr\"odinger
  equation on $\R$}. Indeed, as mentioned
in~\cite[Theorem~1.3]{CaGa23}, the generic well-posedness result from
B\'enyi, Oh and Pocovnicu \cite{BeOhPo15} on the cubic Schr\"odinger
equation on $\R^3$ implies that for $\frac 14<s<s_c$, there exists a
probability measure $\mu$ supported on $H^s(\R^3)$ and a dense set
$\Sigma\subset H^s(\R^3)$ with full $\mu$-measure such that the
following holds.  For every $f_0\in \Sigma$, the solution $\psi^h$
to~\eqref{eq:nls} with initial data $\iota_h\ast f_0$ is well-defined
up to some time $T(f_0)$ and converge to some limiting distributional
solution $\psi$ to~\eqref{eq:nls} with initial data $f_0$ on
$[0,T(f_0)]$: 
\[
\|\iota_{h}\ast f_0-f_0\|_{H^s}\Tend{h}{0} 0,
\quad
\|\psi^h-\psi\|_{L^\infty([0,T],H^s(\R)^3)}\Tend{h}{0} 0.
\]
This result was improved in \cite{BeOhPo19}, with the
lower bound $s>\frac15$, \textcolor{black}{ and in
  \cite{Spitz23} with the lower bound $s>\frac17$}. 
Consequently, the full-measure set $\Sigma$ must be disjoint from the
pathological set. In particular, if the pathological set contains a
dense $G_{\delta}$ set, this cannot be the case for
$\Sigma$. \textcolor{black}{This conflict between full measure
  (probability one for initial data) and density (of the pathological
  set) is also striking in view of the results of \cite{KiMuVi19}
  (see also \cite{DoLuMe19,Spitz23} where the initial Sobolev
  regularity is lowered), since for the cubic Schr\"odinger equation
  on $\R^4$ (which is an energy-critical equation), it is proved that
  for all initial data in $H^s(\R^4)$ with $\frac{5}{6}<s<1$,
  the nonlinear evolution of the randomization of $f$ is almost surely
  global in time and stable in the sense that it is asymptotically
  linear (scattering).}

\begin{remark}
 For $d\ge 5$, it was proven in \cite{MRRS22} that for
certain values of $m$ at least, concerning energy-supercritical cases
($s_c>1$), there exists a finite co-dimensional manifold of smooth
initial data with spherical symmetry such that the solution blows up
in finite time $T_*>0$, with
\begin{equation*}
  \|\psi(t)\|_{L^\infty(\R^d)}\Tend t {T_*}+\infty. 
\end{equation*}
A common feature of our analysis with the approach in \cite{MRRS22} is
the use of hydrodynamical formulations, measuring a strong interaction
between the phase and the amplitude for the solution of
\eqref{eq:nls}, an aspect already at the origin of the results in
\cite{CaARMA}. In particular, it is proven in \cite{MRRS22} that there
exists $1<\si<s_c$ such that
\begin{equation*}
  \|\psi(t)\|_{H^\si(\R^d)}\Tend t {T_*}+\infty. 
\end{equation*}
Our main result concerns the whole range $\si>\frac{s}{1+m(s_c-s)}$,
which, for $s_c>1$ and $s=s_{\rm sob}$, corresponds exactly to
$\si>1$. 
\end{remark}

\subsection{Scheme of the proof}
\label{sec:scheme}

The general strategy mixes ideas from \cite{ACMA} and
\cite{CaGa23}. In \cite{ACMA}, and like in \cite{BGTENS,CCT2}, the general
data are of the form 
\begin{equation*}
  f_0^h(x) = h^{s-d/2}a\(\frac{x}{h}\).
\end{equation*}
Introducing $\eps=h^{m(s_c-s)}$ and the change of unknown function
\begin{equation*}
  u^\eps(t,x) = h^{d/2-s}\psi^h\(h^2\eps t,hx\),
\end{equation*}
the family of function $(u^\eps)_{0<\eps\le 1}$ solves the
semiclassical version of \eqref{eq:nls},
\begin{equation*}
  i\eps\d_t u^\eps+\frac{\eps^2}{2}\Delta u^\eps =
  |u^\eps|^{2m}u^\eps,
\end{equation*}
with initial data $u^\eps_{\mid t=0}=a$ independent of $\eps$. The
solution $u^\eps$ becomes instantaneously $\eps$-oscillatory, in the
sense that there exists $\tau>0$ (independent of $\eps$) such that
\begin{equation*}
  \|u^\eps(\tau)\|_{\dot H^1}\approx \frac{1}{\eps}. 
\end{equation*}
In principle, this property can be shown thanks to WKB analysis, as in
\cite{CaARMA} for the cubic case $m=1$, and in
\cite{ThomannAnalytic} for analytic data. However the proof
in \cite{ACMA} is cheaper in the sense that it merely requires the use
of modulated energy functionals, without justifying WKB
analysis. Using various interpolation estimates and the property
$\|u^\eps(\tau)\|_{L^2}=\|a\|_{L^2}$ (independent of $\eps$), one
infers
\begin{equation*}
  \|u^\eps(\tau)\|_{\dot H^\si}\approx \frac{1}{\eps^\si},\quad
  \forall \si\ge 0.  
\end{equation*}
The result follows when using the scaling to go back to
$\psi^h$, with $t^h= h^2\eps\tau$.
\smallbreak

\color{black}
The proof of Theorem~\ref{thm:0} can be viewed as a consequence of the
proof of Theorem~\ref{thm:main}, as we explain in
Appendix~\ref{sec:th0}, so we now focus on the proof of the latter.
\color{black}
Instead of starting from one concentrating data, we start from a
superposition of such bubbles, like in \cite{Lebeau05} initially,
along a sequence $h_k\to 0$ as $k\to +\infty$ (see
Section~\ref{sec:set}). As pointed out above, 
we also regularize this sum of bubbles, like in \cite{CaGa23}. We then
adapt the modulated energy analysis along the bubble corresponding to
the scale $h_k$. The more direct approach is interesting under the constraint
$s< s_{\rm sob}$  (Section~\ref{sec:modulated}). It is
improved in Section~\ref{sec:renormalized} by considering a
renormalized modulated energy. However, unlike in the case of a single bubble
recalled above, this is not enough to conclude directly, as the
$L^2$-norm of the rescaled function $u^\eps$ is not uniformly bounded,
due to the (initial) bubbles corresponding to $\ell<k$.  In the spirit
of \cite{Lebeau05}, we pick bubbles 
with initial pairwise disjoint supports.
Using a finite propagation speed for the approximate bubbles
involved in the modulated energy functionals, due to Makino, Ukai and
Kawashima \cite{MUK86} (see Section~\ref{sec:bubble}), we prove
spatially localized estimates in Section~\ref{sec:local}, and then
conclude. In an appendix, we provide an alternative proof of the most
delicate result of Section~\ref{sec:local}, which does not use Fourier
analysis, and is thus more flexible for different geometries.

\section{Pathological set of initial data}
\label{sec:set}

\subsection{Definition}\label{sec:def-set}
Following~\cite{SuTz20,CaGa23}, we consider pathological initial data as a superposition of bubbles displaying norm inflation at different scales, of the form
\begin{equation*}
  f_ 0 =\varphi_0+\sum_{k=k_0}^\infty \varphi_k,
\end{equation*}
for some $k_0\ge 1$. 
The background $\varphi_0\in\mathcal C_c^{\infty}(\R^d)$ automatically satisfies that $\R^d\setminus \operatorname{supp}(\varphi_0)$ is a large set, in particular it contains a ball $\overline{B}(0,r_0)$. In order to extend our result to $\T^d$, one should assume that  $\T^d\setminus \operatorname{supp}(\varphi_0)$ contains an open set $B(0,r_0)$. As such, we will choose bubbles $\varphi_k$ with pairwise disjoint supports inside of $\overline{B}(0,r_0)$.

We fix some parameter $M>1$, and define the scale
\[
h_k:=e^{-M^k}.
\]
Each bubble $\varphi_k$ is a rescaling of a profile
$\alpha_k\in\mathcal{C}_c^{\infty}(\R^d)$ at scale $h_k>0$:
\color{black}
\begin{equation}\label{eq:def-bubble}
  \varphi_k(x) =
 \frac{1}{ |\log(h_k)|}h_k^{s-\frac{d}{2}}\alpha_k\(\frac{x}{h_k}\).
\end{equation}
\color{black}
We assume for simplicity that 
\[
\alpha_k(x)=\alpha\left(x-\frac{x_k}{h_k}\right) 
\]
for some fixed profile $\alpha\in\mathcal{C}_c^{\infty}(\R^d)$, where
the position $x_k\in\R^d$ will be characterized later on.
\color{black}
The radial
case can be handled by considering instead, for instance,
$\alpha_k(x)=\alpha(|x|^2-r_k^2/h_k^2)$ with $\alpha\in \mathcal C_c^\infty(\R_+)$.
\color{black}
\color{black}
The logarithmic factor aims at guaranteeing the convergence of the above
series in $H^s$, but its presence can be forgotten to grasp the main
ideas and details of the computations.
\color{black}
Let $\psi_k$ solve~\eqref{eq:nls} with regularized initial data
defined from $f_0$ as 
\begin{equation*}
  {\psi_k}_{\mid t=0}:=  \iota_{h_k/100}\ast f_0.
\end{equation*}
\color{black}
We note that for any fixed $k$, we can find such a global in time solution,
$\psi_k\in L^\infty(\R;H^1\cap L^{2m+2})$, as a weak solution, from
\cite{GV85c}. If $s_c\le 1$, $\psi_k$ is actually the unique, global, mild
solution, and it is smooth for all time by propagation of
regularity. If $s_c>1$, as there exists a unique local solution to~\eqref{eq:nls} in $H^N(\R^d)$ for $N>d/2$ by standard arguments,  at
least locally in time, the weak solution is the unique, smooth,
solution. Note that this means that for any $k$, $\psi_k$ remains
smooth locally in time, on a time interval which may shrink to
$\{0\}$ as $k$ goes to infinity, in agreement with the result from \cite{MRRS22}.  
\color{black}
We expect that this regularized solution will display a norm inflation in $H^{\si}$ around the $k$-th bubble at some time $t_k$ to be defined in the next paragraph.

\subsection{Semiclassical form}\label{sec:rescale}

In order to show a loss of regularity result in the spirit of Lebeau~\cite{Lebeau05}, we rather consider the rescaled equation from~\cite{ACMA} following
\cite{CaARMA}: consider
\color{black}
 \begin{equation}\label{eq:epsilon}
\eps_k=h_k^{m(s_c-s)} |\log(h_k)|^{m}\Tend k \infty 0,
\end{equation}
\color{black}
where this limit stems from the assumption $s<s_c$. Rescale the
function $\psi$ as
\color{black}
\begin{equation*}
u_k (t,x) 
	=  |\log(h_k)| h_k^{\frac{d}{2}-s} \psi_k \( h_k^2 \eps_k t,h_k x\).
 \end{equation*}
 \color{black}
The equation satisfied by $u_k$ is the semiclassical Schr\"odinger equation
\begin{equation}\label{eq:NLSsemi}
i\eps_k \d_t u_k +\frac{\eps_k^2}{2} \Delta u_k
	= \left\lvert  u_k\right\rvert^{2m}u_k
	\quad ;\quad
	{u_k}_{\mid t=0}=u_{0,k}. 
\end{equation}
According to the definition, the initial data $u_{0,k}$ is given by the following formula.

\begin{lemma}[Rescaled initial data]\label{lem:rescalingCI}
One can write 
\begin{equation*}
  u_{0,k}(x) = \jmath\ast\(\varphi_{0,k} +\sum_{\ell=k_0}^{\infty}
   \varphi_{\ell,k}\),
\end{equation*}
where $\jmath=\iota_{1/100}$,
\color{black}
\[
 \varphi_{0,k} (x) =
           |\log(h_k)| h_k^{d/2-s}\varphi_0(h_k x),
\]
and for $\ell\geq k_0$,
\begin{equation*}
  \varphi_{\ell,k}(x) = 
	\frac{ |\log(h_k)|}{ |\log(h_\ell)|}\(\frac{h_k}{h_\ell}\)^{d/2-s}  \alpha_\ell\(\frac{h_k}{h_\ell}x\).
  \end{equation*}
  \color{black}
\end{lemma}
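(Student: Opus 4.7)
The plan is straightforward: Lemma~2.1 is really a bookkeeping exercise that tracks how the dilation in the definition of $u_k$ interacts with the mollification by $\iota_{h_k/100}$. First I would unwind the definitions. By construction,
\[
u_{0,k}(x) = |\log(h_k)|\, h_k^{d/2-s}\, (\iota_{h_k/100}\ast f_0)(h_k x),
\]
so the task is to show that the dilated, mollified version of $f_0$ can be rewritten as $\jmath$ convolved with explicit rescalings of the bubbles.

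The key elementary identity is that mollification commutes with dilation up to rescaling the mollifier. Writing $g_{h}(y) := g(hy)$ and using the change of variable $z = hw$ in the convolution integral, together with $\iota_{h/100}(hz) = h^{-d}\iota_{1/100}(z)$, one gets
\[
(\iota_{h/100}\ast g)(hy) = (\iota_{1/100}\ast g_{h})(y).
\]
Applied with $h=h_k$ and $g=f_0$, this yields
\[
u_{0,k} = \jmath \ast \bigl(|\log(h_k)|\, h_k^{d/2-s}\, f_0(h_k\,\cdot)\bigr).
\]

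Next I would insert the decomposition $f_0 = \varphi_0 + \sum_{\ell\ge k_0}\varphi_\ell$ and identify each term. Using \eqref{eq:def-bubble}, a direct computation gives
\[
|\log(h_k)|\, h_k^{d/2-s}\, \varphi_0(h_k x) = \varphi_{0,k}(x),
\]
which is the definition of $\varphi_{0,k}$, and, for $\ell\ge k_0$,
\[
|\log(h_k)|\, h_k^{d/2-s}\, \varphi_\ell(h_k x)
= \frac{|\log(h_k)|}{|\log(h_\ell)|}\Bigl(\frac{h_k}{h_\ell}\Bigr)^{d/2-s}\alpha_\ell\Bigl(\frac{h_k}{h_\ell}x\Bigr)
= \varphi_{\ell,k}(x).
\]
Collecting these identities gives the claimed decomposition.

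The only point needing a word of justification is the interchange of sum and convolution. The logarithmic factor in \eqref{eq:def-bubble} makes the series $\sum_\ell\varphi_\ell$ converge absolutely in $H^s$ (each bubble is an $\dot H^s$-invariant rescaling, weighted by $1/|\log(h_\ell)|$), so $f_0\in H^s$ and convolution with the smooth, compactly supported kernel $\iota_{h_k/100}$ commutes with the summation. I do not expect any serious obstacle: this is a direct calculation, and the only subtlety is being careful with the log and the scaling exponent $d/2-s$ when passing from scale $h_\ell$ to scale $h_k$.
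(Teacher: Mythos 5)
Your proof is correct and follows essentially the same route as the paper's: both boil down to the same change of variables in the convolution integral, showing that dilating by $h_k$ converts the mollifier $\iota_{h_k/100}$ into the $k$-independent kernel $\jmath=\iota_{1/100}$. You package this as a clean commutation identity for mollification and dilation applied to $f_0$ before decomposing into bubbles, whereas the paper decomposes first and checks term by term, but the underlying computation and the key observation (that $h_k/(h_k/100)=100$ is independent of $k$) are the same.
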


\begin{proof}
We have
\begin{equation*}
  u_{0,k}(x) = \mathcal R_k \(\iota_{h_k/100}\ast f_0\),
\end{equation*}
where $\mathcal R_k $ is the scaling transformation
\begin{equation*}
  \mathcal R_k f(x) =  |\log(h_k)| h_k^{d/2-s}f(h_k x). 
\end{equation*}

We first write this initial data in a more convenient way. When $k\geq 1$, $\ell\geq k_0$ and $h>0$ we have
\begin{multline*}
  \mathcal R_k \( \iota_{h}\ast \varphi_\ell\)(x)
  	=	\frac{ |\log(h_k)|}{ |\log(h_\ell)|} \(\frac{h_k}{h_\ell}\)^{d/2-s} 
	\int \frac{1}{h^d}\iota\(\frac{y}{h}\) \alpha_\ell\(\frac{h_k}{h_\ell}x-\frac{y}{h_\ell}\)\dd y\\
	=	\frac{ |\log(h_k)|}{ |\log(h_\ell)|} \(\frac{h_k}{h}\)^d \(\frac{h_k}{h_\ell}\)^{d/2-s} 
	\int \iota\(\frac{h_k}{h}y\) \alpha_\ell\(\frac{h_k}{h_\ell}(x-y)\)\dd y.
\end{multline*}
When $h=h_k/100$, then $h_k/ h$  is independent of $k$, so that
\begin{equation}\label{eq:bubble_rescaled}
 \mathcal R_k \(  \iota_{h_k/100}\ast \varphi_\ell \)= 
  \jmath\ast \varphi_{\ell,k}.
\end{equation}
The same argument works in the case $\ell=0$.
\end{proof}

\subsection{Norms of the bubbles}
\label{sec:bubble-norm}
We now estimate the Sobolev norms of the initial data.

\color{black}
\begin{lemma}[Sobolev norms of the initial bubbles]
\label{lem:norme-bulle-init}
Let $s'\geq 0$. When $\ell<k$, we have
\[
  \left\|\jmath\ast  \varphi_{\ell,k}\right\|_{\dot
    H^{s'}}\lesssim
  \frac{ |\log(h_k)|}{ |\log(h_\ell)|}
  \(\frac{h_k}{h_\ell}\)^{s'-s},
\]
whereas when $\ell>k$, there holds
\[
  \left\|\jmath\ast  \varphi_{\ell,k}\right\|_{\dot H^{s'}}\lesssim  
  	\frac{ |\log(h_k)|}{ |\log(h_\ell)|}  \(\frac{h_\ell}{h_k}\)^{s}.
\]
\end{lemma}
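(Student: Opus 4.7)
The strategy is to move to the Fourier side and exploit the exact $\dot H^s$-invariant rescaling of $\varphi_{\ell,k}$ together with the smoothing effect of $\jmath$. Since $\alpha_\ell(x)=\alpha(x-x_\ell/h_\ell)$ is a translate of $\alpha$, we have $|\widehat{\alpha_\ell}|=|\hat\alpha|$ and $\|\alpha_\ell\|_{\dot H^{\sigma}}=\|\alpha\|_{\dot H^\sigma}$ for every $\sigma\ge 0$. Writing $\lambda=h_k/h_\ell$ and $C_{\ell,k}=|\log h_k|/|\log h_\ell|$, we have $\varphi_{\ell,k}(x)=C_{\ell,k}\lambda^{d/2-s}\alpha_\ell(\lambda x)$, and a computation of the Fourier transform followed by the change of variable $\eta=\xi/\lambda$ gives
\[
\|\jmath\ast\varphi_{\ell,k}\|_{\dot H^{s'}}^{2}
=C_{\ell,k}^{2}\,\lambda^{2(s'-s)}
\int_{\R^{d}}|\eta|^{2s'}\,|\hat\jmath(\lambda\eta)|^{2}\,|\hat\alpha(\eta)|^{2}\,\dd\eta.
\]

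In the first case $\ell<k$, one has $\lambda<1$. Since $\jmath\in L^{1}$ with $\int\jmath=1$, $|\hat\jmath|\le 1$, and the integral above is bounded by $\|\alpha\|_{\dot H^{s'}}^{2}$. Taking square roots and unfolding the definitions of $\lambda$ and $C_{\ell,k}$ yields the first claimed estimate.

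In the second case $\ell>k$, one has $\lambda>1$, and the trivial bound $|\hat\jmath|\le 1$ only gives $C_{\ell,k}\lambda^{s'-s}$, which grows in $\lambda$ and is too weak. Here the smoothness of $\iota$ enters: since $\iota\in\mathcal C^{\infty}_c$, $\hat\jmath\in\Sch(\R^{d})$, so for any $N$, $|\hat\jmath(\zeta)|\lesssim_{N}(1+|\zeta|)^{-N}$. Combining this with $|\hat\alpha|\le\|\alpha\|_{L^{1}}$, the change of variable $u=\lambda\eta$ gives, for $N$ large enough,
\[
\int_{\R^{d}}|\eta|^{2s'}\,|\hat\jmath(\lambda\eta)|^{2}\,|\hat\alpha(\eta)|^{2}\,\dd\eta
\;\lesssim\;\lambda^{-d-2s'}\int_{\R^{d}}|u|^{2s'}(1+|u|)^{-2N}\,\dd u
\;\lesssim\;\lambda^{-d-2s'}.
\]
Plugged into the identity above, this produces $\|\jmath\ast\varphi_{\ell,k}\|_{\dot H^{s'}}^{2}\lesssim C_{\ell,k}^{2}\lambda^{-2s-d}$, which is even stronger than the required bound (it suffices to drop the extra $\lambda^{-d}$ factor, admissible since $\lambda>1$).

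The main obstacle is precisely the case $\ell>k$: the bubble $\varphi_{\ell,k}$ is concentrated on scale $1/\lambda\ll 1$, so its intrinsic $\dot H^{s'}$ norm grows with $\lambda$, and all the needed savings come from the mollifier $\jmath$, whose Fourier transform is essentially supported on $\{|\xi|\lesssim 1\}$. The direct operator-norm estimate $\|\jmath\ast f\|_{\dot H^{s'}}\le\|f\|_{\dot H^{s'}}$ is useless here; one genuinely has to exploit the rapid decay of $\hat\jmath$, which is why smoothness of $\iota$, rather than only $\iota\in L^{1}$, is used.
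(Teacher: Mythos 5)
Your proof is correct. You take a somewhat different route from the paper: you work entirely on the Fourier side, computing
$\|\jmath\ast\varphi_{\ell,k}\|_{\dot H^{s'}}^{2}$ explicitly after the dilation $\eta=\xi/\lambda$ and exploiting the Schwartz decay $|\hat\jmath(\zeta)|\lesssim_N(1+|\zeta|)^{-N}$ to absorb the concentration when $\lambda=h_k/h_\ell>1$. The paper instead argues in physical space, splitting $|\nabla|^{s'}=|\nabla|^\beta\cdot|\nabla|^{s'-\beta}$, applying Young's convolution inequality to get
$\|\jmath\ast\varphi_{\ell,k}\|_{\dot H^{s'}}\lesssim\||\nabla|^\beta\jmath\|_{L^1}\,\|\varphi_{\ell,k}\|_{\dot H^{s'-\beta}}$,
and then optimizing over $\beta\in[0,s']$ (taking $\beta=0$ for $\ell<k$, $\beta=s'$ for $\ell>k$). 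Both are implementations of the same idea — shift the derivatives (or oscillations) onto the mollifier $\jmath$ precisely when the bubble concentrates — and you correctly identify that the $\ell>k$ case genuinely requires the smoothness of $\iota$, not just $\iota\in L^1$. Your Fourier-side argument has the small cosmetic advantage of being self-contained (no Young inequality, no discussion of what $|\nabla|^\beta\jmath\in L^1$ means for non-integer $\beta$) and in fact yields the slightly sharper bound $C_{\ell,k}\lambda^{-s-d/2}$ in the $\ell>k$ case, though the extra gain is not needed. Either argument serves the purpose.
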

\color{black}
As a consequence, when $\ell<k$, then $\left\|\jmath\ast  \varphi_{\ell,k}\right\|_{\dot    H^{s'}}$ is large when $s'<s$ but small when $s'>s$, and when $\ell>k$, then $\left\|\jmath\ast  \varphi_{\ell,k}\right\|_{\dot   H^{s'}}$  is small for every $s'$.

\begin{proof}
When $k>\ell$, we have $h_k/h_\ell\to 0$, so  the initial data 
$\jmath\ast \varphi_{\ell,k}$ spread as $k\to \infty$. When $k<\ell$, the initial data $\jmath\ast \varphi_{\ell,k}$ rather concentrate, but the convolution  prevents the growth of Sobolev norms.

More precisely, let $s'\geq 0$, then
\begin{equation*}
  \| \varphi_{\ell,k}\|_{\dot H^{s'}}\lesssim  
\frac{ |\log(h_k)|}{ |\log(h_\ell)|}\(\frac{h_k}{h_\ell}\)^{s'-s} . 
\end{equation*}
Note that in $L^\infty$, we get a small norm when $\ell<k$ and  $s<s_c<d/2$:
\begin{equation*}
  \| \varphi_{\ell,k}\|_{L^\infty}\lesssim 
\frac{ |\log(h_k)|}{ |\log(h_\ell)|}\(\frac{h_k}{h_\ell}\)^{d/2-s} .
\end{equation*}

We now remark that the $L^1$-norms of $\jmath$ and its derivatives are
bounded independently of $k$.  
Using Young inequality, we deduce that  if $0\le \beta\le s'$, then
\begin{equation}\label{eq:estCIk}
  \left\|\jmath\ast  \varphi_{\ell,k}\right\|_{\dot H^{s'}}\le
  \|\jmath\|_{W^{1,\beta}} \|
  \varphi_{\ell,k}\|_{\dot H^{s'-\beta}}\lesssim 
   \frac{ |\log(h_k)|}{ |\log(h_\ell)|} \(\frac{h_k}{h_\ell}\)^{s'-\beta-s}. 
\end{equation}
As a consequence,
 when $\ell<k$, choosing $\beta=0$ we get the first inequality of the statement,
and when $\ell>k$, we choose $\beta=s'$ so that we get the second inequality of the statement.
\end{proof}

From these estimates, one can deduce upper bounds on the Sobolev norms
of $u_{0,k}$:
\color{black}
\[
\|u_{0,k}\|_{\dot H^{s'}}
	\lesssim \sum_{\ell<k}\frac{ |\log(h_k)|}{ |\log(h_\ell)|}\left(\frac{h_k}{h_\ell}\right)^{s'-s}+\sum_{\ell\geq k}\frac{ |\log(h_k)|}{ |\log(h_\ell)|}\left(\frac{h_\ell}{h_k}\right)^{s}.
\]
In the case $s'=s$, the sum over bubbles $\ell<k$ is convergent, but a
logarithmic factor remains,
$\sum_{\ell<k}\frac{ |\log(h_k)|}{ |\log(h_\ell)|}\lesssim
|\log(h_k)|$. Therefore, we get the upper bounds  
\[
\|u_{0,k}\|_{\dot H^{s'}}
	\lesssim
\begin{cases}
\(\frac{h_k}{h_{k-1}}\)^{s'-s}+1 
&\quad\text{if}\quad  s'> s,\\
|\log(h_k)|& \quad \text{if}\quad s'=s,\\
h_k^{s'-s}+1
	 & \quad\text{if}\quad s'< s.
\end{cases}
\]
The logarithmic unboundedness in the case $s'=s$ is essentially
irrelevant for the rest of this paper, as  in case it appears, it is
always multiplied by a positive power of $h_k$.
\color{black}
For $s'=0$, using the conservation of mass, we deduce the estimate
\begin{equation}\label{eq:L2}
\|u_k(t)\|_{L^2}\lesssim h_k^{-s}.
\end{equation}
As pointed out in the introduction, this estimate is in sharp contrast
with the case of  a single bubble considered in \cite{ACMA}: this
above bound, which is sharp, shows that the $L^2$-norm of the initial
data is not uniformly bounded in $k$. This forces us to adapt the
arguments from \cite{ACMA} at several stages: modulated energy
estimates, and interpolation steps to estimate $u_k$ in homogeneous
Sobolev spaces. 
\smallbreak

One can also estimate the semiclassical energy of $u_k$, which is
(formally) conserved by the flow of \eqref{eq:NLSsemi}:
\begin{equation*}
E_k(t)	=  \frac{\eps_k^2}{2}\|\nabla
  u_k(t)\|_{L^2}^2+\frac{1}{m+1}\|u_k(t)\|_{L^{2m+2}}^{2m+2}. 
\end{equation*}
Strictly speaking, for weak solutions
\color{black}{(a case we may have to consider, if $s_c>1$, as explained
  in Section~\ref{sec:def-set}),
 \color{black}
the energy is not necessarily 
conserved, but it is a nonincreasing
function of time; see \cite{GV85c}.
Let $0< s\leq s_c$. From the Sobolev embedding $\dot H^{s_{\rm sob}}\hookrightarrow L^{2m+2}$, we get 
\begin{equation*}
E_k(t)\le E_k(0)
	\lesssim \left(\varepsilon_k^2 \(\frac{h_k}{h_{k-1}}\)^{2(1-s)}+
        \(\frac{h_k}{h_{k-1}}\)^{2(m+1)(s_{\rm sob}-s)}\right)|\log(h_k)|+1. 
      \end{equation*}
In view of \eqref{eq:epsilon} and the algebraic relation
\begin{equation*}
 ms_c+1 = (m+1)s_{\rm sob},
\end{equation*}
the first term on the right hand side is controlled by the second one, and
 \begin{equation}\label{eq:H1}
E_k(t)	\lesssim \(\frac{h_k}{h_{k-1}}\)^{2(m+1)(s_{\rm sob}-s)}|\log(h_k)|+1.
\end{equation}
Note that $E_k$ stays bounded when $s< s_{\rm sob}$,
but tends to infinity  otherwise.

\begin{remark}[WKB analysis]\label{rem:WKB}
  We emphasize that due to the unboundedness, in
  $L^2(\R^d)$, of $u_{0,k}$, and
  possibly also
  $\eps_k\nabla u_{0,k}$ (if $s>s_{\rm sob}$), WKB analysis for
  \eqref{eq:NLSsemi} is not obvious at 
  all. Typically in the cubic case $m=1$, where the beautiful idea of
  Grenier \cite{Grenier98} makes it possible to justify WKB analysis
  in Sobolev spaces, the limit $k\to \infty$ in \eqref{eq:NLSsemi} is
  unclear. This is so even in Zhidkov spaces $X^s(\R^d)$ ($f\in X^s$, $s>1$,
  if $f\in L^\infty$ and $\nabla f\in 
  H^{s-1}$),  where WKB is justified
  in \cite{ACIHP}. We have seen above that $\eps_k\nabla u_{0,k}$ need not be
  bounded uniformly in $k$. We will bypass this difficulty by considering suitable
  modulated energy functionals, in Sections~\ref{sec:modulated} and
  \ref{sec:renormalized}. 
\end{remark}

\section{Analysis of semiclassical bubbles}
\label{sec:bubble}

In view of Lemma~\ref{lem:rescalingCI}, and of the semiclassical
analysis from \cite{ACMA}, introduce the hydrodynamical system
associated to the initial mode $\ell$, at scale $k$: 
\begin{equation}
  \label{eq:bulle}
  \left\{
    \begin{aligned}
      &\d_t \phi_{\ell,k}+\frac{1}{2}|\nabla \phi_{\ell,k}|^2+|a_{\ell,k}|^{2m}=0,\quad
      \phi_{\ell,k\mid t=0}=0,\\
      &\d_t a_{\ell,k} +\nabla \phi_{\ell,k}\cdot \nabla a_{\ell,k}+\frac{1}{2}a_{\ell,k}\Delta
      \phi_{\ell,k} =0,\quad a_{\ell,k\mid t=0}= \jmath \ast
      \varphi_{\ell,k}.
    \end{aligned}
  \right.
\end{equation}
We use the convention to denote $(\phi_{k,k},  a_{k,k})=(\phi_k,a_k)$. 
\subsection{Cauchy problem and zero speed of propagation}

Discarding the dependence upon the parameters $\ell$ and $k$, the set
of equations \eqref{eq:bulle} can be written in a universal way,
\begin{equation}
  \label{eq:bulle-univ-reg}
  \left\{
    \begin{aligned}
      &\d_t \phi+\frac{1}{2}|\nabla \phi|^2+|a|^{2m}=0,\quad
      \phi_{\mid t=0}=0,\\
      &\d_t a +\nabla \phi\cdot \nabla a+\frac{1}{2}a\Delta
      \phi =0,\quad a_{\mid t=0}= a_{\rm init},
    \end{aligned}
  \right.
\end{equation}
with the common feature that $a_{\rm init} \in
\mathcal{C}_{c}^{\infty}(\R^d) $. The following result will be of
constant use in the rest of this paper: 
\begin{lemma}\label{lem:MUK}
  Let $a_{\rm init} \in \mathcal{C}_{c}^{\infty}(\R^d) $. There exists
  $T>0$ and a unique solution $(\phi,a)\in \mathcal C([0,T];H^\infty(\R^d))$ to
  \eqref{eq:bulle-univ-reg}. Moreover, $(\phi,a)$ remains compactly
  supported for $t\in [0,T]$, and 
  \begin{equation*}
       \supp \phi(t,\cdot),\ \supp a(t,\cdot)\subset \supp a_{\rm init}.
  \end{equation*}
\end{lemma}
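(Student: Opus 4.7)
The strategy is to pass to the velocity variable $v=\nabla \phi$, recognize the resulting system for $(v,a)$ as a quasilinear symmetric hyperbolic system (after the renormalization of Makino--Ukai--Kawashima), apply Kato-type local theory, and finally recover $\phi$ by time integration. Differentiating the first equation of \eqref{eq:bulle-univ-reg} in $x$ yields the Euler-type coupled system
\[
\d_t v + (v\cdot \nabla)v + 2m\, a^{2m-1}\nabla a = 0,\qquad \d_t a + v\cdot\nabla a + \tfrac{a}{2}\diver v = 0,
\]
whose characteristic speeds in a unit direction $\xi$ are $v\cdot\xi\pm\sqrt{m}\,a^m$. Crucially, the ``sound speed'' $\sqrt{m}\,a^m$ vanishes at the vacuum $\{a=0\}$ for every $m\ge 1$. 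A naive diagonal symmetrizer degenerates there, so following \cite{MUK86} I would introduce a change of unknown of the form $w\sim a^m$ (suitably normalized) that puts the system into symmetric hyperbolic form with coefficients that remain smooth across $\{a=0\}$.

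Kato's theorem for quasilinear symmetric hyperbolic systems then produces, on some $[0,T]$, a unique solution with $(v,a)\in \mathcal C([0,T];H^{s}(\R^d))$ for every $s>d/2+1$, and propagation of regularity upgrades this to $\mathcal C([0,T];H^\infty)$ since $a_{\rm init}\in\mathcal C_c^\infty$. Irrotationality of $v$ is preserved, since $\nabla\times v$ satisfies a linear transport equation with zero initial data. Recovering $\phi$ is then done by setting
\[
\phi(t,x):=-\int_0^t\(\tfrac12|v(\tau,x)|^2+|a(\tau,x)|^{2m}\)\dd\tau,
\]
and checking through $\d_t(\nabla\phi-v)=0$ and $(\nabla\phi-v)_{\mid t=0}=0$ that indeed $\nabla\phi=v$, so that $\phi$ satisfies the first equation of \eqref{eq:bulle-univ-reg}.

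The support property then follows from the MUK zero-speed-of-propagation phenomenon. The pair $(\phi,a)\equiv (0,0)$ is itself a solution of \eqref{eq:bulle-univ-reg}; on $\R^d\setminus \supp(a_{\rm init})$ our initial data agree with this trivial one, and both the flow velocity $v$ and the sound speed $\sqrt{m}\,a^m$ vanish there at $t=0$. Localized energy estimates for the MUK-symmetrized system, using smooth cut-offs supported in $\R^d\setminus \supp(a_{\rm init})$, then force $(v,a)\equiv(0,0)$ on this open exterior set throughout $[0,T]$; integration in the formula above then gives $\phi\equiv 0$ there as well. The main obstacle is precisely this zero-speed assertion: a generic quasilinear symmetric hyperbolic system has strictly positive propagation speed, and the zero-growth statement here relies both on the degeneracy of the sound speed at vacuum (which is what the MUK change of variable is designed to accommodate) and on the special initial condition $\phi_{\mid t=0}=0$, equivalently $v_{\mid t=0}=0$.
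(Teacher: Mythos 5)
Your plan for the existence part coincides with the paper's: change unknowns to $(V,A)=(\nabla\phi,a^m)$ as in \cite{MUK86}, observe that the resulting first-order system is symmetric hyperbolic with a constant symmetrizer $S$, run Kato-type energy estimates (with tame estimates to push up to $H^\infty$), then recover $\phi$ by time integration and the check $\d_t(\nabla\phi-V)=0$.

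Where you diverge is the support property, and this is where there is a gap. You invoke ``localized energy estimates with smooth cut-offs supported in $\R^d\setminus\supp(a_{\rm init})$.'' If $\chi$ is such a cut-off and one tracks $E_\chi(t)=\int\chi^2\<SU,U\>$, the integration by parts against $\chi^2$ produces, on top of the usual interior term $\lesssim\|\nabla U\|_{L^\infty}E_\chi$, a commutator term of the form
\begin{equation*}
\sum_j\int \chi\,\partial_j\chi\,\<SM_j(U)U,U\>,
\end{equation*}
supported in the transition region of $\chi$ (adjacent to $\supp a_{\rm init}$). This term is bounded by $\|U\|_{L^\infty}\int|\nabla\chi|\,\chi\,|U|^2$, which is \emph{not} controlled by $E_\chi$ itself, since $|\nabla\chi|$ is large precisely where $\chi$ is small. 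One is tempted to absorb it by shrinking the cut-off, but then proving the resulting family of estimates closes becomes circular: you need $U$ to vanish on the support of $\nabla\chi$ in order to conclude $U$ vanishes on $\{\chi=1\}$, which is the statement being proved. The vanishing of the ``sound speed'' at the vacuum is indeed the key structural feature, but it does not by itself kill these commutator terms in an $L^2$-based localization.

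The paper sidesteps this entirely with a pointwise argument borrowed from \cite[Theorem~2]{MUK86}: fix $(t,x)$ with $x\notin\supp a_{\rm init}$, set $C'(t,x)=\sup_{t'\le t,\,j}|\partial_j U(t',x)|$ (finite because the solution is smooth), and use $|M_j(U)|\le C|U|$ (linearity of $M_j$ in its argument) to get the scalar ODE inequality $|\partial_t U(t',x)|\le dCC'(t,x)\,|U(t',x)|$ at the frozen point $x$. Gronwall then yields $U(t,x)=0 \iff U(0,x)=0$, without any spatial integration. This is the ingredient your sketch is missing; you should either adopt this pointwise Gronwall lemma or supply a genuinely non-circular bootstrap for the cut-off energy estimates.
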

\begin{proof}
  This result is a consequence of the analysis from \cite{MUK86},
  whose main ideas we recall. Change of unknown function $(\phi,a)$
  to
  \begin{equation*}
    (V,A)=(\nabla \phi,a^m).
  \end{equation*}
  It solves
\begin{equation}
  \label{eq:bulle-univ-sym}
  \left\{
    \begin{aligned}
      &\d_t V+V\cdot \nabla V+\nabla\(|A|^{2}\)=0,\quad
      V_{\mid t=0}=0,\\
      &\d_t A +V\cdot \nabla A+\frac{m}{2}A\diver
      V =0,\quad A_{\mid t=0}=\(a_{\rm init}\)^m,
    \end{aligned}
  \right.
\end{equation}
which turns out to be a symmetric hyperbolic system, with a constant
symmetrizer. Indeed, denote
$U=(\RE(A), \IM(A), V)^T$: the system~\eqref{eq:bulle-univ-sym}
becomes 
\begin{equation}\label{eq:hyperbolic}
\partial_t U+\sum_{j=1}^dM_j(U)\partial_{j}U=0,
\end{equation}
where
\[
\sum_{j=1}^dM_j(U)\xi_j
	=\left(
	\begin{array}{ccc}
	V\cdot \xi & 0 & \frac m2\RE(A)\xi^T\\
	0 & V\cdot \xi & \frac m2 \IM(A)\xi^T\\
	2\RE(A)\xi & 2\IM(A) \xi & V\cdot \xi\operatorname{I}_d 
	\end{array}
	\right).
\]
Hence the matrices $M_j\in \mathcal{M}_{d+2}(\R)$ are such that $SM_j\in\mathcal{S}_{d+2}(\R)$, with
\[
S=\left(\begin{array}{cc}
m \operatorname{I}_d & 0\\
0 & 4\operatorname{I}_2
\end{array}\right).
\]
Local existence in $H^\si(\R^d)$ with $\si>d/2+1$ is then standard
for \eqref{eq:hyperbolic} hence for \eqref{eq:bulle-univ-sym}; see
e.g. \cite{Majda}. We emphasize tame estimates, which show that the
lifespan $T$ is independent of $\si>d/2+1$. 
Let $\Lambda =1-\Delta$: by symmetry,
  \begin{equation*}
    \frac{\dd}{\dd t}\<S\Lambda^\si U,\Lambda^\si U\>=2\<S\Lambda^\si
    \d_tU,\Lambda^\si U\> .
  \end{equation*}
In view of \eqref{eq:hyperbolic},
\begin{equation*}
  \<S\Lambda^\si
    \d_tU,\Lambda^\si U\> = -
    \sum_{j=1}^d \< S\Lambda^\si\(M_j(U)\d_j U\), \Lambda^\si U\> .
\end{equation*}
Write
\begin{align*}
  \< S\Lambda^\si\(M_j(U)\d_j U\), \Lambda^\si U\>  &= \<
    SM_j(U)\d_j \Lambda^\si U, \Lambda^\si U\>   \\
&\quad - \<
    S\left[M_j(U),\Lambda^\si\right]\d_j U, \Lambda^\si U\>.
\end{align*}
Since $SM_j$ is symmetric,
\begin{align*}
  \<  SM_j(U)\d_j \Lambda^\si U, \Lambda^\si U\>  =
-\frac{1}{2}\< \Lambda^\si U,\d_j\(  SM_j(U)\)\Lambda^\si U\>,
\end{align*}
As $SM_j$ is linear in its argument, we readily infer
\begin{equation*}
  \left| \<  SM_j(U)\d_j \Lambda^\si U, \Lambda^\si U\>\right| \lesssim \|
  S\d_j M_j(U)\|_{L^\infty} \|U\|_{H^\si}^2\lesssim \|\nabla
  U\|_{L^\infty}\|U\|_{H^\si}^2. 
\end{equation*}
By commutator estimate  (see~\cite{KaPo88}), we have
\begin{equation*}
  \left|\<
    S\left[M_j(U),\Lambda^\si\right]\d_j U,
    \Lambda^\si U\>\right|\lesssim \|\nabla U\|_{L^{\infty}}\|U\|_{H^\si}^2.
\end{equation*}
We infer:
\begin{equation}\label{eq:tame1}
    \frac{\dd}{\dd t}\<S\Lambda^\si U,\Lambda^\si  U\>\lesssim
   \|\nabla U\|_{L^{\infty}} \<S\Lambda^\si U,\Lambda^\si U\>. 
  \end{equation}
To return to the initial unknown $(\phi,a)$, we integrate the first
equation from \eqref{eq:bulle-univ-reg} with respect to time
\begin{equation*}
  \phi(t) = -\frac{1}{2}\int_0^t|V(\tau)|^2\dd\tau -
  \int_0^t|A(\tau)|^2\dd\tau. 
\end{equation*}
We check that $V=\nabla \phi$, as
\begin{equation*}
  \d_t \( \nabla \phi- V\) = \nabla \d_t \phi - \d_t V=0.
\end{equation*}
We go back to $a$ by now viewing the second equation in
\eqref{eq:bulle-univ-reg} as a transport equation with smooth
coefficients. This leads to the local existence result of the lemma.
  
We now move to the zero propagation speed property, established
initially in \cite[Theorem 2]{MUK86}. We see that there exists $C\ge
0$ such that for every $j$, 
\[
|M_j(U)|\leq C|U|.
\]
Fix $(t,x)$ in $[0,T]\times \R^d$. Since the solution is well-defined
on $[0,t]$, we can define 
\[
C'(t,x):=\sup_{t'\in[0,t], 1\leq j\leq d}|\partial_j U_k(t',x)|<+\infty.
\]
Using the equation, for $t'\in[0,t]$ there holds
\[
|\partial_tU_k(t',x)|
	\le d CC'(t,x)|U_k(t',x)|.
\]
Using Gronwall lemma, we deduce that $U_k(t,x)=0$ if and only if
$U_k(0,x)=0$, hence
\begin{equation*}
  \supp (A(t),V(t))=\supp (A,V)_{\mid t=0}=\supp a_{\rm init}.
\end{equation*}
We go back to the unknown $(\phi,a)$ like described above.
\end{proof}

\begin{remark}[Lifespan of the bubbles without convolution]
  Introduce bubbles without the initial convolution,
\begin{equation}
  \label{eq:bulle-app}
  \left\{
    \begin{aligned}
      &\d_t \uphi_{\ell,k}+\frac{1}{2}|\nabla \uphi_{\ell,k}|^2+|\ua_{\ell,k}|^{2m}=0,\quad
      \uphi_{\ell,k\mid t=0}=0,\\
      &\d_t \ua_{\ell,k} +\nabla \uphi_{\ell,k}\cdot \nabla \ua_{\ell,k}+\frac{1}{2}\ua_{\ell,k}\Delta
      \uphi_{\ell,k} =0,\quad \ua_{\ell,k\mid t=0}= 
      \varphi_{\ell,k}.
    \end{aligned}
  \right.
\end{equation}
Using the convention
$(\uphi_{\ell,\ell},\ua_{\ell,\ell})=(\uphi_\ell,\ua_\ell)$, we check
the algebraic relations
\color{black}
\begin{equation}\label{eq:scaling-bulle}
  \begin{aligned}
 \uphi_{\ell,k}(t,x) &=      \frac{\eps_k}{\eps_\ell}\uphi_\ell\(
                       \(\frac{\eps_kh_k^2}{\eps_\ell h_\ell^2}\)t, \frac{h_k}{h_\ell}x \)  ,\\
\ua_{\ell,k}(t,x)& = \frac{|\log   h_k|}{|\log h_\ell|}
\(\frac{h_k}{h_\ell}\)^{d/2-s}  \ua_\ell\(\(\frac{\eps_kh_k^2}{\eps_\ell
                 h_\ell^2}\) t, \frac{h_k}{h_\ell}x\) ,
   \end{aligned}
 \end{equation}
 \color{black}
 where we recall notation
 \color{black}
\begin{equation*}
  \eps_k = h_k^{m(s_c-s)}|\log h_k|^{m}.
\end{equation*}
\color{black}
  Using a virial computation, showing that if a global smooth solution
  exists for \eqref{eq:bulle-app}, then it is dispersive, the authors
  in \cite{MUK86} show that $T$, in Lemma~\ref{lem:MUK}, is
  necessarily finite. In view of \eqref{eq:scaling-bulle}, this implies
  that for $\ell<k$, $(\uphi_{\ell,k},\ua_{\ell,k})$ remains smooth on
  $[0,T_{\ell,k}]$, with
  \begin{equation*}
    T_{\ell,k} = \frac{\eps_\ell h_\ell^2}{\eps_k h_k^2} T\Tend k \infty +\infty,
  \end{equation*}
  while for $\ell>k$, $(\uphi_{\ell,k},\ua_{\ell,k})$ remains smooth on
  $[0,T_{\ell,k}]$ for the same expression of $T_{\ell,k}$, which now
  goes to zero as $k$ goes to infinity. 
\end{remark}
\begin{remark}[On low modes]\label{rem:low-modes}
  The scaling
  \eqref{eq:scaling-bulle} also shows that for $\ell<k$ and $t\approx
  1$, nonlinear effects are negligible in \eqref{eq:bulle-app}, a
  remark reminiscent of the strategy adopted in the proof of
  \cite[Proposition~2.6]{CaGa23}, where the \emph{linear} evolution of
  ``low modes'' is considered.  Actually, in this r\'egime, even the
  linear evolution of the initial data is negligible: the low modes
  are essentially constant in time, a remark which is exploited in the
  introduction of a renormalized modulated energy functional in
  Section~\ref{sec:renormalized}. 
\end{remark}

\subsection{Superposition principle}

Let us assume that the points $x_{\ell}$ are chosen so that the
profiles $\jmath\ast\varphi_{\ell,k}$ have disjoint supports. We
recall that $\alpha_k(x)=\alpha(x-\frac{x_k}{h_k})$. Assuming that
$\alpha$ and $\jmath=\iota(\cdot/100)$ are supported in
$\overline{B}(0,r_1)$ for some small constant $r_1>0$, we have  
\[
\operatorname{supp}(\jmath\ast\varphi_{\ell,k})
	\subset \overline{B}\(\frac{x_\ell}{h_k},2r_1\frac{h_\ell}{h_k}\).
\]
Choosing the points $x_{\ell}$ sufficiently far away from each other so that 
\[
|x_\ell-x_{\ell+1}|>4r_1h_\ell,
\]
the bubbles are therefore disjoint.

In this case, we may use a nonlinear superposition principle.
Introduce the following intermediate approximate solution with initial data including all the scales $\ell\leq k$:
\begin{equation}
  \label{eq:appint}
  \left\{
    \begin{aligned}
      &\d_t \tilde\phi_k+\frac{1}{2}|\nabla \tilde\phi_k|^2+|\tilde a_k|^{2m}=0,\quad
      \tilde \phi_{k\mid t=0}=0,\\
      &\d_t \tilde a_k +\nabla \tilde \phi_k\cdot \nabla \tilde a_k
+\frac{1}{2}\tilde a_k\Delta
      \tilde \phi_k =0,\quad \tilde a_{k\mid t=0}=\sum_{\ell\le k} \jmath \ast
      \varphi_{\ell,k}. 
    \end{aligned}
  \right.
\end{equation}
Lemma~\ref{lem:MUK} implies that
\begin{equation*}
  (\tilde\phi_k,\tilde a_k)= \sum_{\ell\le k} (\phi_{\ell,k},a_{\ell,k}),
\end{equation*}
where each bubble is given by \eqref{eq:bulle}. In view of the above
analysis, we also introduce ($\tilde
V_k,\tilde A_k)=(\nabla \tilde \phi_k,\tilde a_k^m)$, which solves
\begin{equation}
  \label{eq:Appint}
  \left\{
    \begin{aligned}
      &\d_t \tilde V_k+\tilde V_k\cdot \nabla \tilde
      V_k+\nabla\(|\tilde A_k|^{2}\)=0,\quad
      \tilde V_{k\mid t=0}=0,\\
      &\d_t \tilde A_k +\tilde V_k\cdot \nabla \tilde A_k
+\frac{m}{2}\tilde A_k\diver
      \tilde V_k =0,\quad \tilde A_{k\mid t=0}=\(\sum_{\ell\le k} \jmath \ast
      \varphi_{\ell,k}\)^m. 
    \end{aligned}
  \right.
\end{equation}
Since the functions $\jmath \ast \varphi_{\ell,k}$ have pairwise
disjoint supports, we may also write 
\begin{equation*}
     \tilde A_{k\mid t=0}=\sum_{\ell\le k} \(\jmath \ast
      \varphi_{\ell,k}\)^m   .
\end{equation*}
Like above, the zero speed of propagation from Lemma~\ref{lem:MUK}
then implies the relation
\begin{equation*}
  (\tilde V_k,\tilde A_k)= \sum_{\ell\le k} (V_{\ell,k},A_{\ell,k}),
\end{equation*}
where each $(V_{\ell,k},A_{\ell,k})$ solves the same system as
$(\tilde V_k,\tilde A_k)$, with initial datum
\begin{equation*}
  (V_{\ell,k},A_{\ell,k})_{\mid t=0} = \(0, \(\jmath \ast
      \varphi_{\ell,k}\)^m   \). 
\end{equation*}

\subsection{Refined estimates for the bubbles}

Like Lemma~\ref{lem:MUK}, the following lemma will be crucial for
the rest of this paper:
\begin{lemma}[Uniform estimates for the bubbles]\label{lem:tame}
There exist $C>0$ and $T>0$ such that  the smooth solutions $(\tilde
V_k,\tilde A_k)$ have a lifespan $T_k$ which is uniformly bounded from
below: $T_k\geq T>0$.  
Moreover, for every $t\in[0,T]$, for every $k\geq 1$ and for every
integer $\si$, there holds
\begin{align*}
\||\nabla |^\si \tilde V_k(t)\|_{L^2}+\||\nabla |^\si \tilde
  A_k(t)\|_{L^2}&\le C\||\nabla |^\si \tilde A_k(0)\|_{L^2}\\
 & \lesssim 1+ \(\frac{h_k}{h_{k-1}}\)^{\si +1 +m(s_c-s) -\frac{d}{2}}.
\end{align*}
\end{lemma}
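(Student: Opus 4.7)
The plan is to combine the superposition structure from Section~\ref{sec:bubble} with the tame symmetric-hyperbolic inequality \eqref{eq:tame1} established in the proof of Lemma~\ref{lem:MUK}, exploiting the pairwise disjoint supports of the bubbles at every integer Sobolev level. Fix an integer $N>d/2+1$. By the superposition principle and the zero speed of propagation property of Lemma~\ref{lem:MUK}, the profiles $(\jmath\ast\varphi_{\ell,k})^m$ remain pairwise disjointly supported, and since $|\nabla|^N$ is a local operator for integer $N$,
\[
\|\tilde A_k(0)\|_{\dot H^N}^2 \approx \sum_{\ell\le k}\|(\jmath\ast\varphi_{\ell,k})^m\|_{\dot H^N}^2.
\]
The $\ell=k$ term is $O(1)$ (it is a translate of the fixed profile $(\jmath\ast\alpha)^m$), while for $\ell<k$ the Moser-type product inequality $\|f^m\|_{\dot H^N}\lesssim \|f\|_{L^\infty}^{m-1}\|f\|_{\dot H^N}$ combined with Lemma~\ref{lem:norme-bulle-init} yields, up to logarithmic factors,
\[
\|(\jmath\ast\varphi_{\ell,k})^m\|_{\dot H^N}\lesssim \left(\frac{h_k}{h_\ell}\right)^{(m-1)(d/2-s)+N-s} = \left(\frac{h_k}{h_\ell}\right)^{N+1+m(s_c-s)-d/2},
\]
using the algebraic identity $s_c=d/2-1/m$. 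For $N$ sufficiently large the exponent is positive and $h_k/h_\ell<1$ for $\ell<k$, so the sum is uniformly bounded in $k$.

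Next, I would apply \eqref{eq:tame1} to $U_k=(\RE\tilde A_k,\IM\tilde A_k,\tilde V_k)^T$ at level $H^N$; the Sobolev embedding $H^N\hookrightarrow W^{1,\infty}$ turns it into a closed differential inequality of the form $\frac{\dd}{\dd t}\|U_k\|_{H^N}^2\lesssim \|U_k\|_{H^N}^3$, and a standard bootstrap argument produces a universal $T>0$ on which $\|U_k(t)\|_{H^N}\le 2\|U_k(0)\|_{H^N}$. This gives the uniform lifespan $T_k\ge T$ via the blow-up criterion, together with the uniform bound $\|\nabla U_k\|_{L^\infty}\le C$ on $[0,T]\times\R^d$. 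A second application of \eqref{eq:tame1}, now at the homogeneous level $\dot H^\sigma$ for arbitrary integer $\sigma$, with this $L^\infty$ control already in hand, reduces to $\frac{\dd}{\dd t}\||\nabla|^\sigma U_k\|_{L^2}^2\lesssim \||\nabla|^\sigma U_k\|_{L^2}^2$, and Grönwall's lemma delivers $\||\nabla|^\sigma U_k(t)\|_{L^2}\le C\||\nabla|^\sigma U_k(0)\|_{L^2}$. Since $\tilde V_k(0)=0$, this is the first inequality of the statement, and the second follows by repeating the disjoint-support decomposition with $N$ replaced by the given integer $\sigma$: the $\ell=k$ bubble contributes $O(1)$ and, among $\ell<k$, the dominant ratio $h_k/h_\ell$ occurs at $\ell=k-1$.

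The main obstacle is the stark contrast with the single-bubble analysis of \cite{ACMA}: by \eqref{eq:L2}, the $L^2$ mass of the initial data is not uniformly bounded in $k$, so one cannot run the symmetric-hyperbolic theory at low Sobolev regularity. The workaround exploited above is that, in sufficiently high Sobolev norms, the low-mode profiles $(\jmath\ast\varphi_{\ell,k})^m$ with $\ell<k$ are actually \emph{small}, not large, because the Moser inequality couples one factor in $\dot H^N$ with $(m-1)$ factors in $L^\infty$, all of which decay as $h_k/h_\ell\to 0$ (they spread out without concentrating in amplitude). This is precisely what allows the bootstrap at level $H^N$ to close uniformly in $k$, after which the homogeneous estimates at arbitrary integer $\sigma$ follow routinely from the tame inequality.
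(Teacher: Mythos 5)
Your initial-data computation (the disjoint-support splitting of $\|\tilde A_k(0)\|_{\dot H^\sigma}$ and the Moser estimate giving the exponent $\sigma+1+m(s_c-s)-d/2$) matches the paper's, and the idea of running a two-step argument (bootstrap to get a uniform lifespan and $\|\nabla U_k\|_{L^\infty}\lesssim 1$, then a linear Grönwall at level $\dot H^\sigma$) is exactly the right structure. However, there is a genuine gap at the bootstrap stage.

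You propose to close the estimate at level $H^N$, i.e. to derive $\frac{\dd}{\dd t}\|U_k\|_{H^N}^2\lesssim \|U_k\|_{H^N}^3$ and conclude a universal $T>0$. The problem is that $\|U_k(0)\|_{H^N}$ is \emph{not} uniformly bounded in $k$ as soon as $d\ge 3$: the $L^2$ component of $U_k(0)$ is controlled by $\sum_{\ell<k}(h_k/h_\ell)^{1-d/2+m(s_c-s)}$, and the exponent $1-d/2+m(s_c-s)$ is negative once $m(s_c-s)<d/2-1$, so this piece diverges as $k\to\infty$. A bootstrap for $\dot y\lesssim y^{3/2}$ gives a lifespan $T\approx \|U_k(0)\|_{H^N}^{-1}\to 0$, destroying the uniform lower bound $T_k\ge T$ you need. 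Your own diagnosis (``the low-mode profiles are small in high Sobolev norms'') is correct for the homogeneous pieces $\dot H^\sigma$ with $\sigma$ large, but the inhomogeneous $H^N$ norm still includes the unbounded $L^2$ piece, so the assertion that ``this is precisely what allows the bootstrap at level $H^N$ to close uniformly in $k$'' is false.

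The paper avoids this by running the bootstrap not on $U_k$ but on $\nabla U_k$ in $H^{N-1}$ when $d\le 4$ (so the offending $L^2$ level never appears), and on $D^{\sigma_0}U_k$ in $H^{N-\sigma_0}$ with $\sigma_0\sim d/2-1$ when $d\ge 5$. The latter requires two nontrivial verifications that your proposal omits: that the truncated norm $\|D^{\sigma_0}U_k\|_{H^{N-\sigma_0}}$ still controls $\|\nabla U_k\|_{L^\infty}$ (this is Lemma~\ref{lem:Linfty}, proved by a careful Fourier argument exploiting $\sigma_0<d/2$), and that the energy estimate closes, since the commutator term $[D^{\sigma_0},M_j(U_k)]\d_j U_k$ produces products of derivatives at intermediate orders which must be controlled by the truncated norm alone, via a bespoke Hölder/Sobolev bookkeeping. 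Your second step (Grönwall at level $\dot H^\sigma$ once $\|\nabla U_k\|_{L^\infty}$ is under control) is fine, but it depends on the first step, which as written does not close.
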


Note that in particular, the Sobolev norms are bounded as soon as $\si+1\geq \frac d2$. Considering the equation satisfied by $(\tilde V_k,\tilde A_k)$, this implies that
\begin{equation*}
\| |\nabla |^\si\tilde V_k(t)\|_{L^\infty}+\||\nabla |^\si \tilde
  A_k(t)\|_{L^\infty}
  \lesssim \| |\nabla |^\si\tilde A_k(0)\|_{L^\infty}\\
  \lesssim 1+ \(\frac{h_k}{h_{k-1}}\)^{\si +1 +m(s_c-s)}.
\end{equation*}

\begin{proof}
  The proof relies on the symmetry of the hyperbolic system
  \eqref{eq:Appint}.
  Let $N>1+d/2$ and $\si\le N$ be integers,
 and $D^\si$ denote the family of differential operators in space, of
 order $\si$. In particular, the notation $\|D^\si U\|_{L^2}$ stands
 for
 \begin{equation*}
 \|D^\si U\|_{L^2}
 	=  \sum_{|\beta|=\si} \|\d_x^\beta U\|_{L^2}.
 \end{equation*}
 Like in the  framework recalled in the  proof of
 Lemma~\ref{lem:MUK}, denote
 \[
   U_k=(\RE(\tilde A_k), \IM(\tilde  A_k), \tilde V_k)^T.
 \]
 It solves \eqref{eq:hyperbolic}. First, we examine its initial data.
In view of
Section~\ref{sec:bubble-norm}, since $\si$ is an integer, and the
initial bubbles have pairwise disjoint supports, Leibniz rule and
H\"older inequality yield
\begin{align*}
  \| D^\si U_k (0)\|_{L^2}&=\sum_{\ell\le k}\| D^\si (\jmath\ast
   \varphi_{\ell,k})^m\|_{L^2} 
   \lesssim
  1+    \(\frac{h_k}{h_{k-1}}\)^{\si-ms+(m-1)\frac{d}{2}}.
\end{align*}
We rewrite the last power as
\begin{equation*}
  \si-ms+(m-1)\frac{d}{2}=\si+1-\frac{d}{2}+m(s_c-s).
\end{equation*}
In particular, for $d\le 2$, $U_k(0)$ is uniformly bounded in
$H^{N}(\R^d)$ (we choose $\si=0$), and for $d\le 4$, $\nabla U_k(0)$ is uniformly bounded in
$H^{N-1}(\R^d)$ (we choose $\si=1$). We start by proving Lemma~\ref{lem:tame} in
the case $d\le 4$. For $1\le n\le d$, $\d_n U$ solves
\begin{equation*}
  \d_t \d_n U_k +\sum_{j=1}^d M_j\( U_k\) \d_j \d_n U_k  =- \sum_{j=1}^d
  M_j\(\d_n U_k\) \d_j U_k. 
\end{equation*}
We proceed
classically, like in the proof of Lemma~\ref{lem:MUK}, by viewing the
term on the right hand side as a perturbative (semilinear) term, and compute
\begin{align*}
  \frac{\dd}{\dd t}\<S\Lambda^{N-1}\d_n U_k, \Lambda^{N-1}\d_n U_k\>& = 
-\sum_{j=1}^d \< S\Lambda^{N-1}\(M_j(U_k)\d_j \d_n U_k \),
 \Lambda^{N-1}\d_n U_k\>\\
&\quad -\sum_{j=1}^d \< S\Lambda^{N-1}\( M_j\(\d_n U_k\) \d_j U_k\),
 \Lambda^{N-1}\d_n U_k\>. 
\end{align*}
Using the fact that $H^{N-1}(\R^d)$ is a Banach algebra, and since
$M_j$ is linear in its argument, the last term is controlled by
$\|\nabla U_k\|_{H^{N-1}}^3$. For the first term on the right hand
side, we proceed exactly like in the proof of Lemma~\ref{lem:MUK}, and
write
\begin{align*}
  &\< S\Lambda^{N-1}\(M_j(U_k)\d_j \d_n U_k \),
 \Lambda^{N-1}\d_n U_k\> \\
&\qquad\qquad= \< S\(M_j(U_k)\d_j \Lambda^{N-1}\d_n U_k \),
 \Lambda^{N-1}\d_n U_k\>\\
&\qquad\qquad\quad + \< S\([\Lambda^{N-1},M_j(U_k)]\d_j \d_n U_k \),
 \Lambda^{N-1}\d_n U_k\>.
\end{align*}
By symmetry, the first term on the right hand side is controlled by
\begin{equation*}
  \|\nabla U_k\|_{L^\infty} \|\nabla U_k\|_{H^{N-1}}^2,
\end{equation*}
and by Kato-Ponce commutator estimate, the last term is estimated
similarly:
\begin{align*}
   \frac{\dd}{\dd t}\<S\Lambda^{N-1}\d_n U_k, \Lambda^{N-1}\d_n
   U_k\>&\lesssim \|\nabla U_k\|_{L^\infty} \|\nabla U_k\|_{H^{N-1}}^2
   + \|\nabla U_k\|_{H^{N-1}}^3\\
&\lesssim \|\nabla U_k\|_{H^{N-1}}^3,
\end{align*}
where we have used the Sobolev embedding.
Summing over $n\in \{1,\dots,d\}$, and recalling the equivalence of
norms
\begin{equation*}
  \<S\Lambda^{N-1}\nabla  U_k, \Lambda^{N-1}\nabla
   U_k\>\sim \|\nabla U_k\|_{H^{N-1}}^2,
\end{equation*}
we infer that there exists $T>0$ independent of $k$ such that $(\nabla
U_k)_k$ is uniformly bounded in $L^\infty([0,T];H^{N-1})$, and
Lemma~\ref{lem:tame} follows in the case $d\le 4$. 
\smallbreak

When $d\ge 5$, we replace the above quantity $\|\nabla
U_k\|_{H^{N-1}}$ by $\|D^{\si_0} U_k\|_{H^{N-\si_0}}$ for
$\si_0>1$:
\begin{equation*}
  \|U_k\|_{\si_0,N}:=\|D^{\si_0}
  U_k\|_{H^{N-\si_0}}=\sum_{\si=\si_0}^N \|D^\si U_k\|_{L^2}.  
\end{equation*}
Three conditions are required:
\begin{itemize}
 \item The initial norm $\|U_k(0)\|_{\si_0,N}$ is bounded uniformly in $k$. 
\item The norm $\|\cdot\|_{\si_0,N}$ controls the $L^\infty$-norm of $\nabla U_k$,
  \begin{equation*}
    \|\nabla U_k\|_{L^\infty}\lesssim  \|U_k\|_{\si_0,N}.
  \end{equation*}
\item This norm controls each term in the analogue of the above energy
  estimates.
\end{itemize}
As we have seen above, the first condition is fulfilled for $\si_0\ge
\frac{d}{2}-1$, and we choose
\begin{equation}\label{eq:si_0}
  \si_0 = 
  \begin{cases}
    \frac{d}{2}-1&\text{ if } d \text{ is even},\\
\left[\frac{d}{2}\right]&\text{ if } d \text{ is odd}.
  \end{cases}
\end{equation}
The second condition is satisfied, in view of the following elementary
result, valid in any space dimension:
\begin{lemma}\label{lem:Linfty}
Let $d\geq 1$, $\si_0$ given by \eqref{eq:si_0}, and $K>\frac
d2$. There exists $C=C(d,K)$ such that for every $f\in
H^{\si}(\R^d)$, 
\[
\|f\|_{L^{\infty}}\leq C\|D^{\si_0} f\|_{H^{K-\si_0}}.
\]
\end{lemma}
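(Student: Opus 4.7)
The proof reduces to Fourier inversion and a single Cauchy--Schwarz estimate. First, from $\sum_{\sigma=\sigma_0}^K |\xi|^\sigma \sim |\xi|^{\sigma_0}\langle\xi\rangle^{K-\sigma_0}$ together with Plancherel, one has the equivalence
$$\|D^{\sigma_0} f\|_{H^{K-\sigma_0}} \sim \left(\int_{\R^d} |\xi|^{2\sigma_0}\langle\xi\rangle^{2(K-\sigma_0)}|\hat f(\xi)|^2 \dd\xi\right)^{1/2}.$$

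Next, assuming first that $f\in\Sch(\R^d)$ (and extending by density to any $f$ for which the right-hand side is finite), I would bound $\|f\|_{L^\infty}\leq (2\pi)^{-d/2}\|\hat f\|_{L^1}$ and split
$$|\hat f(\xi)| = \bigl(|\xi|^{\sigma_0}\langle\xi\rangle^{K-\sigma_0}|\hat f(\xi)|\bigr)\cdot \bigl(|\xi|^{-\sigma_0}\langle\xi\rangle^{-(K-\sigma_0)}\bigr).$$
Cauchy--Schwarz then yields
$$\|f\|_{L^\infty} \lesssim \|D^{\sigma_0} f\|_{H^{K-\sigma_0}} \cdot \left(\int_{\R^d} |\xi|^{-2\sigma_0}\langle\xi\rangle^{-2(K-\sigma_0)} \dd\xi\right)^{1/2}.$$

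Finally, I would check that the weight integral is finite. Splitting into $\{|\xi|\leq 1\}$ and $\{|\xi|\geq 1\}$, integrability at infinity is equivalent to $2K>d$, which is exactly the hypothesis, while integrability near the origin is equivalent to $2\sigma_0<d$. This last constraint is precisely what the definition \eqref{eq:si_0} guarantees: $2\sigma_0=d-2$ for $d$ even and $2\sigma_0=d-1$ for $d$ odd, so in both cases $2\sigma_0<d$; in fact $\sigma_0$ is the largest integer for which this holds, which explains the definition. There is no genuine obstacle here: the whole argument is an elementary Fourier-side estimate, and the only meaningful point is matching the low-frequency integrability condition to the chosen value of $\sigma_0$.
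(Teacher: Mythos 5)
Your proof is correct and follows essentially the same route as the paper: Fourier inversion to bound $\|f\|_{L^\infty}$ by $\|\hat f\|_{L^1}$, a single Cauchy--Schwarz split against the weight $|\xi|^{\sigma_0}\langle\xi\rangle^{K-\sigma_0}$, and verification that $2K>d$ (integrability at infinity) and $2\sigma_0<d$ (integrability at the origin) — the latter being exactly what \eqref{eq:si_0} is designed to ensure. The only difference is that you make explicit the Plancherel-side identification of $\|D^{\sigma_0}f\|_{H^{K-\sigma_0}}$ with the weighted $L^2$-norm, which the paper leaves implicit.
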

\begin{proof}[Proof of Lemma~\ref{lem:Linfty}]
We use the inverse Fourier transform to infer
\[
\|f\|_{L^{\infty}}
	\lesssim \int_{\R^d}|\widehat{f}(\xi)|\dd \xi.
\]
Cauchy-Schwarz inequality yields
\[
\left(\int_{\R^d}|\widehat{f}(\xi)|\dd \xi\right)^2
	\lesssim \(\int_{\R^d}|\xi|^{2\si_0}\langle
        \xi\rangle^{2(K-\si_0)}|\widehat{f}(\xi)|^2\dd \xi
        \)\(\int_{\R^d}\frac{1}{|\xi|^{2\si_0}\langle
          \xi\rangle^{2(K-\si_0)}}\dd \xi\). 
\]
Since $2K>d$, the last  integral is convergent
at infinity, and  our definition of $\si_0$ makes it convergent near
zero, as we always have $d-1-2\si_0>-1$. 
\end{proof}
Resume the strategy presented for $d\le 4$: the equation satisfied by
$D^{\si_0}U_k$ is of the form
\begin{equation*}
  \d_t D^{\si_0}U_k + \sum_{j=1}^d M_j(U_k) \d_j D^{\si_0}U_k = -
  \sum_{j=1}^d[ D^{\si_0},M_j(U_k)] \d_j U_k.
\end{equation*}
When considering
\begin{equation*}
  \frac{\dd}{\dd t}\<S \Lambda^{N-\si_0}D^{\si_0}U_k
  ,\Lambda^{N-\si_0}D^{\si_0}U_k\>, 
\end{equation*}
the terms coming from $M_j(U_k) \d_j D^{\si_0}U_k $ (left hand side
above) are treated like before, by using symmetry and Kato-Ponce
commutator estimate, as $ D^{\si_0}U_k $ is estimated in the
inhomogeneous Sobolev space $H^{N-\si_0}$. To control the terms coming from the
right hand side, by Leibniz rule and  Cauchy-Schwarz inequality, we
have to estimate a combination of terms of the form
\begin{equation*}
  \| D^{\si_1}U_k D^{\si_2}U_k\|_{L^2} ,\quad \si_1,\si_2\ge 1, \
  \si_0+1\le \si_1+\si_2 \le N+1.
\end{equation*}
Let us fix such exponents $\si_1$, $\si_2$. We have that
\begin{equation*}
  \| D^{\si_1}U_k D^{\si_2}U_k\|_{L^2} \le
  \|D^{\si_1}U_k\|_{L^p}\|D^{\si_2}U_k\|_{L^q},\quad
  \frac{1}{p}+\frac{1}{q}=\frac{1}{2}. 
\end{equation*}
In view the Sobolev embedding $\dot
H^{d(1/2-1/p)}(\R^d)\hookrightarrow L^p(\R^d)$, we infer
\begin{equation*}
  \| D^{\si_1}U_k D^{\si_2}U_k\|_{L^2} 
  \le
 \|D^{\si_1}U_k\|_{\dot H^{d/q}}\|D^{\si_2}U_k\|_{\dot H^{d/p}}.
\end{equation*}
 By symmetry of the roles, we assume that $\si_1\geq \si_2$. In everything that follows we assume that $\frac dq$ is an integer. We get
 \begin{align*}
  \| D^{\si_1}U_k D^{\si_2}U_k\|_{L^2}
  & \le \|D^{\si_1+ d/q}U_k\|_{L^2}
    \|D^{\si_2+[d/p]}U_k\|_{\dot H^{d/p-[d/p]}}\\
  & \le \|D^{\si_1+ d/q}U_k\|_{L^2}
  \|D^{\si_2+[d/p]}U_k\|_{H^1}.
\end{align*}
From a bootstrap argument based
on the control of $\|\nabla
U_k(t)\|_{L^\infty}$ uniformly in $k$ and $t\in [0,T]$, it suffices to
check that we may find such $p$ and $q$, satisfying in addition
\begin{equation*}
  \si_0\le \si_1+ [d/q]\le N,\quad \si_0\le \si_2+ [d/p]\le N-1.
\end{equation*}

We proceed as follows.
\begin{itemize}
\item If $\si_1\geq \si_0$, we choose $q=\infty$ and $p=2$. We get that 
\[
\si_1+[d/q] = \si_1 \in[ \si_0,N],
\]
\[
\si_2+[d/p]\geq [d/2]\geq \si_0. 
\]
Moreover, since $2\si_2\leq \si_1+\si_2\leq N+1$, we have
\[
\si_2+[d/p]\leq \frac{N+1}{2}+\frac d2,
\]
which is bounded above by $N-1$ when $N$ is chosen large enough.

\item
 Otherwise, we have $\si_2\leq \si_1\leq \si_0-1$. If $d=1$ we have $\si_0=0$ hence this case does not occur. Let $q\in[2,\infty)$ such that $\frac dq$ is an integer and
\[
\si_1+\frac dq
	=\si_1+[d/q]=\si_0.
\]
In particular we check that $\si_1+[d/q]\in[\si_0,N]$. 
Then $\si_1+\si_2\geq \si_0+1$ hence
\[
\frac dp=\frac d2-\frac dq=\frac d2-\si_0+\si_1\geq \frac d2+1-\si_2.
\]
Therefore, we have
\[
\si_2+\frac dp\geq \frac d2+1.
\]
In particular,
\[
\si_2+[d/p]\geq \frac d2\geq  \si_0.
\]
Finally,
\[
\si_2+[d/p]\leq \si_1+\frac d2\leq \si_0-1+\frac{d}{2},
\]
which is bounded above by $N-1$ when $N$ is chosen large enough.
\end{itemize}
\end{proof}

\section{Modulated energy estimate}
\label{sec:modulated}

We now use the approximate bubbles analyzed in the previous section in
order to establish some information regarding the actual solution $u_k$
to~\eqref{eq:NLSsemi}. We emphasize that even in the case of a single
bubble, $\tilde a_k e^{i
  \tilde \phi_k/\eps}$ must not be expected to approximate $u_k$
in $L^2$, due to phase modulations; see~\cite{CaARMA}.  But even if
phase modulations are taken into account, the unboundedness of
$u_{0,k}$ in $L^2$ 
is an issue to justify WKB analysis, see Remark~\ref{rem:WKB}. We follow the
strategy of~\cite[Theorem 4.1]{ACMA}, and 
introduce a modulated energy functional 
\[
H_k(t)=\frac 12 \|(\eps_k\nabla -i\tilde V_k)u_k\|_{L^2}^2+\int_{\R^d}
  \(F(|u_k|^2)-F(\tilde \rho_k) -(|u_k|^2-\tilde \rho_k)f(\tilde
  \rho_k)\)\dd x,
\]
where $\tilde \rho_k=|\tilde a_k|^2$, $f(y)=y^m$, and
\[
F(y)=\int_0^y f(z)\dd z=\frac{y^{m+1}}{m+1}.
\]
Denote the kinetic part by
\[
K_k(t)=\frac 12 \|(\eps_k\nabla -i\tilde V_k)u_k\|_{L^2}^2
\]
and the potential part by
\[
P_k(t)
	=\int_{\R^d}
  \(F(|u_k|^2)-F(\tilde \rho_k) -(|u_k|^2-\tilde \rho_k)f(\tilde
  \rho_k)\)\dd x.
\]
We know from \cite{ACMA} that the potential part is bounded from below by
\[
P_k(t)
	\ge c\int_{\R^d}\left| |u_k|^2-\tilde\rho_k\right|(|u_k|^{2m}+(\tilde\rho_k)^{m})\dd x,
\]
for some $c>0$ depending only on $m$. In particular, $H_k$ is the sum
of two nonnegative terms. 

We first detail the case  $s<s_{\rm sob}$ (with $s\le 2$), for the sake of clarity.
When $t=0$,  we get, in view of Section~\ref{sec:bubble-norm},
\[
K_k(0)=\frac 12 \|\eps_k\nabla u_{0,k}\|_{L^2}^2
	\lesssim \eps_k^2h_k^{2(1-s)}|\log(h_k)|.
      \]
 Under the assumption   $s<s_{\rm sob}$,  $K_k(0)=\mathcal{O}(\varepsilon_k^2)$ as
 $k\to\infty$. Indeed, we note the algebraic relation
 \color{black}
\begin{equation*}
\eps_k^2h_k^{2(1-s)}|\log(h_k)|= |\log h_k|^{2m+1} h_k^{2m(s_c-s)+2(1-s)} =
|\log h_k|^{2m+1}  h_k^{(2m+2)(s_{\rm sob}-s)}.
\end{equation*}
\color{black}
As for the initial potential part, we first use the fact that $P_k$
corresponds to the beginning of a Taylor expansion,
\begin{equation*}
  P_k= \frac{1}{2}\int_{\R^d} \(|u_k|^2-\tilde \rho_k\)^2\int_0^1
  f'\( \tilde \rho_k + \theta(|u_k|^2-\tilde \rho_k)\)\dd \theta\dd x.
\end{equation*}
Considering this quantity at time $t=0$, and using the fact that the
supports of the initial bubbles  $\jmath\ast \varphi_{\ell,k}$ are
pairwise disjoint,
\begin{equation*}
  \supp \jmath\ast \varphi_{\ell,k}\cap \supp \jmath\ast
  \varphi_{\ell',k}=\emptyset,\quad \forall \ell\not =\ell',
\end{equation*}
we obtain the rough bound
\begin{equation*}
  0\le P_k(0)\lesssim  \sum_{\ell>k} \int_{\R^d} |\jmath\ast
  \varphi_{\ell,k}|^4 \times \sum_{\ell'} |\jmath\ast
  \varphi_{\ell',k}|^{2m-2}  \lesssim
  \sum_{\ell>k} \|\jmath\ast \varphi_{\ell,k}\|^{2m+2}_{L^{2m+2}}.
\end{equation*}
In view of the
embedding $\dot H^{s_{\rm sob}}\hookrightarrow L^{2m+2}$, we infer,
thanks to Lemma~\ref{lem:norme-bulle-init}, and since the map
$z\mapsto M^{z^2} e^{-M^z}$ is integrable on $[1,\infty)$ and
decreasing for $z\ge z_0\gg 1$,
\color{black}
\begin{equation*}
  P_k(0)\lesssim \(\frac{|\log (h_{k})|}{|\log
    (h_{k+1})|}\)^{2m+2} \(\frac{h_{k+1}}{h_k}\)^{(2m+2)s}|\log(h_k)|. 
\end{equation*}
\color{black}
Choosing $M>1$ sufficiently large, we have in particular
$P_k(0)=\O(\eps_k^2)$. 

\begin{proposition}[Modulated energy estimate]\label{prop:modulated0}
Let $s<s_{\rm sob}$ with $s\le 2$. Then for every $t\in [0,T]$,where $T$ is
given by Lemma~\ref{lem:tame}, there holds
\[
H_k(t)
	\lesssim 
	\eps_k^2+h_k^{2(m+1)(s_{\rm sob}-s)}.
\]
\end{proposition}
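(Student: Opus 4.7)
My plan is to adapt the modulated energy strategy of \cite[Theorem~4.1]{ACMA} to the present superposition setting, using Lemma~\ref{lem:tame} as the main new input to handle the $k$-dependence of the approximate solution.

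\textbf{Initial data.} The bound $H_k(0)\lesssim \eps_k^2+h_k^{2(m+1)(s_{\rm sob}-s)}$ is essentially already recorded in the excerpt. Using the definition of $\eps_k$ and the algebraic identity $2m(s_c-s)+2(1-s)=2(m+1)(s_{\rm sob}-s)$, one rewrites $K_k(0)\lesssim \eps_k^2\,h_k^{2(1-s)}|\log h_k|=|\log h_k|^{2m+1}h_k^{2(m+1)(s_{\rm sob}-s)}$, which is $\O(\eps_k^2+h_k^{2(m+1)(s_{\rm sob}-s)})$. For $P_k(0)$, the Taylor-integral representation of $P_k$, the disjointness of the supports of the initial bubbles $\jmath\ast\varphi_{\ell,k}$, the embedding $\dot H^{s_{\rm sob}}\hookrightarrow L^{2m+2}$ and Lemma~\ref{lem:norme-bulle-init} yield the displayed bound, and choosing $M>1$ large enough forces $P_k(0)=\O(\eps_k^2)$.

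\textbf{Time derivative and Gronwall.} Next, I differentiate $H_k$ in time, substituting \eqref{eq:NLSsemi} for $\d_tu_k$ and \eqref{eq:Appint} for $\d_t(\tilde V_k,\tilde A_k)$, and integrate by parts repeatedly. The resulting identity has the generic shape
\begin{equation*}
\frac{\dd}{\dd t}H_k(t)=-\int \d_j\tilde V_k^i\,\mathcal Q_k^{ij}\,\dd x - m\int (\diver\tilde V_k)\,\mathcal P_k\,\dd x+\eps_k^2\,\mathcal E_k(t),
\end{equation*}
where $\mathcal Q_k^{ij}$ is a quadratic form in the modulated gradient $(\eps_k\nabla-i\tilde V_k)u_k$ controlled pointwise by the integrand of $K_k$; $\mathcal P_k$ is a potential-type density controlled pointwise by the integrand of $P_k$ via the coercivity inequality for $P_k$ recalled in the excerpt; and $\mathcal E_k(t)$ is the error stemming from the absence of the Bohm quantum pressure in \eqref{eq:Appint}, itself controlled by $\|\tilde A_k\|_{H^2}^2$ and $\|\nabla \tilde V_k\|_{L^\infty}$. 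By Lemma~\ref{lem:tame}, on $[0,T]$ the quantities $\|\tilde V_k\|_{W^{1,\infty}}$, $\|\tilde A_k\|_{W^{1,\infty}}$ and $\|\tilde A_k\|_{H^2}$ are bounded uniformly in $k$, so
\begin{equation*}
\frac{\dd}{\dd t}H_k(t)\le C\,H_k(t)+C\,\eps_k^2,
\end{equation*}
and Gronwall's lemma on $[0,T]$ closes the argument: $H_k(t)\lesssim H_k(0)+\eps_k^2\lesssim \eps_k^2+h_k^{2(m+1)(s_{\rm sob}-s)}$.

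\textbf{Main obstacle.} The delicate point is that \eqref{eq:L2} shows $\|u_k(t)\|_{L^2}$ is \emph{not} uniformly bounded in $k$, so one cannot naively split cross-terms of the form $\int f(t,x)(|u_k|^2-\tilde\rho_k)\,\dd x$ by the triangle inequality and separately control the two pieces. The resolution is that in the modulated energy identity every such cross-term is naturally weighted by $|u_k|^{2m}+\tilde\rho_k^m$ (the weight produced by differentiating the nonlinearity $f(y)=y^m$), and the pointwise lower bound for $P_k$ recalled just before the proposition shows that this weighted quantity is already dominated by the integrand of $P_k$. Hence the Gronwall step closes self-consistently in terms of $H_k$ itself, without ever invoking any uniform $L^2$ control of $u_k$; the superposition structure plays no role beyond providing, via Lemma~\ref{lem:tame}, the uniform-in-$k$ bounds on low-order Sobolev norms of $(\tilde V_k,\tilde A_k)$ that feed into the constants above.
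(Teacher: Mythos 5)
There is a genuine gap in your treatment of the quantum--pressure error, which is in fact the one place where the superposition structure forces a new argument beyond \cite{ACMA}. After differentiating $H_k$ in time and integrating by parts (exactly as in \cite[Theorem~4.1]{ACMA}), one of the contributions is
\begin{equation*}
  I=\eps_k^2\int\nabla\(\diver\tilde V_k\)\cdot\nabla|u_k|^2
   =\eps_k\int\nabla\(\diver\tilde V_k\)\cdot\(\overline{u_k}\,(\eps_k\nabla-i\tilde V_k)u_k+\text{c.c.}\).
\end{equation*}
This term carries a bare factor $\overline{u_k}$, not a factor $|u_k|^2-\tilde\rho_k$, and it is not weighted by $|u_k|^{2m}+\tilde\rho_k^m$. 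So your claimed resolution --- that ``every such cross-term is naturally weighted by $|u_k|^{2m}+\tilde\rho_k^m$ and is dominated by the integrand of $P_k$'' --- does not apply to $I$. Likewise, your error term $\mathcal E_k$ is asserted to be controlled by $\|\tilde A_k\|_{H^2}$ and $\|\nabla\tilde V_k\|_{L^\infty}$ alone, but $I$ depends on $u_k$ itself; a naive Cauchy--Schwarz gives $|I|\lesssim K_k+\eps_k^2\|u_k\|_{L^2}^2$, and by \eqref{eq:L2} the second term is of size $\eps_k^2 h_k^{-2s}$, which does not close the Gronwall argument. This is precisely the obstruction you flag, but the fix you propose is vacuous for this term.

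The missing idea, which the paper introduces, is to decompose $|u_k|=(|u_k|-\sqrt{\tilde\rho_k})+\sqrt{\tilde\rho_k}$ inside $I$, yielding two pieces $II$ and $III$. In $II$, the factor $|u_k|-\sqrt{\tilde\rho_k}$ is estimated in $L^{2m+2}$ by $P_k^{1/(2m+2)}$ via the coercivity of $P_k$, and paired against $\|\nabla\diver\tilde V_k\|_{L^{2+2/m}}$ (bounded by Lemma~\ref{lem:tame} and Sobolev embedding) and $\sqrt{K_k}$. In $III$, the factor $\sqrt{\tilde\rho_k}$ is bounded in $L^\infty$ uniformly in $k$, and $\|\nabla\diver\tilde V_k\|_{L^2}$ is used. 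Both pieces close to give $|I|\lesssim K_k+P_k+\eps_k^{2(m+1)/m}$. Note also that the constraint $s\le 2$ plays no visible role in your argument, whereas it is exactly what guarantees the exponent $3+m(s_c-s)-d/2=(m-1)(d/2-s)+2-s$ in Lemma~\ref{lem:tame} is nonnegative, so that the $L^{2+2/m}$ and $L^2$ norms of $\nabla\diver\tilde V_k$ above stay uniformly bounded in $k$. Without the decomposition of $|u_k|$ and without explaining the role of $s\le 2$, the argument does not go through.
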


\begin{proof}
We estimate the time derivative of $H_k$.

We follow line by line the proof of~\cite[Theorem 4.1]{ACMA}, taking into account the fact that the $L^2$-norm of $u_k$ has the upper bound 
\[
\|u_k\|_{L^2}=\|u_{0,k}\|_{L^2}\lesssim h_k^{-s},
\] hence it is not necessarily bounded as $k\to\infty$. Similarly, low order Sobolev norms of $\tilde V_k$ may grow to infinity as $k\to\infty$.

Since we only consider weak solutions $u_k$, some integrations by part may not make sense. For simplicity, the following proof will hence be formal only, but can be fully justified by working on a sequence $u_k^{(n)}\in\mathcal{C}(\R, H^2)$ of global strong solutions to the equation
\[
i\eps_k\partial_t u_k^{(n)}+\frac{\eps_k^2}{2}\Delta
u_k^{(n)}=f_n\(\left|u_k^{(n)}\right|^2\) 
\quad ; \quad
u_{k\mid t=0}^{(n)}=u_{0,k},
\]
\[
f_n(y)=\frac{y^m}{1+(\delta_n y)^m},
\quad \delta_n\to 0^+.
\]
Indeed, as $n\to\infty$, the sequence $(u_k^{(n)})_n$ will converge to a weak solution to~\eqref{eq:NLSsemi} with initial data $u_{0,k}$. In this case, one should replace $F$ by $F_n(y)=\int_0^yf_n(z)\dd z$ in the formula for the modulated energy $H_k^{(n)}$, and $G(y)=\int_0^yzf'(z)\dd z$ by $G_n(y)=\int_0^yzf_n'(z)\dd z$.
\smallbreak

In order to estimate $H_k(t)$, we use Madelung transform and switch to the hydrodynamic variables 
\[\rho=|u_k|^2,
\quad
J=\IM\(\eps_k \overline{u_k}\nabla u_k\),
\]
satisfying the equation
\[
  \left\{
\begin{aligned}
&\partial_t\rho+\operatorname{div}(J)=0
\\
&\partial_tJ_j+\frac{\eps_k^2}{4}\sum_{p=1}^d \partial_i\(4\RE(\partial_j \overline{u_k}\partial_p u_k\)-\partial_{pj}\rho)+\partial_j G(\rho)=0.
\end{aligned}
\right.
\]
As in~\cite{ACMA}, we write the time derivative of the kinetic energy using the hydrodynamic variables, then proceed by integration by parts. We get the formula
\begin{equation*}
\frac{\dd}{\dd t}H_k
	=-\frac 14 I-I'-\int(G(\rho)-G(\tilde \rho_k)-(\rho-\tilde \rho_k)G'(\tilde \rho_k)\diver \tilde V_k,
      \end{equation*}
where
\[
I =\eps_k^2\int \nabla \(\diver \tilde V_k\)\cdot \nabla |u_k|^2,
\]
\[
I'=\RE\sum_{p,j}\int\partial_j  (\tilde V_k)_p (\eps_k\partial_j u_k-i (\tilde V_k)_j u_k)(\overline{\eps_k\partial_p u_k-i (\tilde V_k)_p u_k}).
\]
We note that $\|\nabla \tilde V_k\|_{L^{\infty}}$ is uniformly bounded, hence $I'=\O(K_k)$.

The only new difficulty, compared to \cite{ACMA},  is to show that
$I=\mathcal{O}(K_k+\eps_k^2)$. We
have 
  \begin{equation*}
 I 
	= \eps_k\int
     \nabla \(\diver \tilde V_k\)\cdot\( \overline u_k (\eps_k\nabla-i\tilde V_k)u_k
  +u_k \overline {(\eps_k\nabla-i\tilde V_k)u_k} \).
\end{equation*}
We note that if we proceed like in \cite{ACMA}, we invoke Cauchy-Schwarz then Young inequalities to get the upper bound
\begin{equation*}
\left|  I  \right|\\
\leq \|(\eps_k\nabla-i \tilde V_k)u_k\|_{L^2}^2 + \eps_k^2 \|u_k\|_{L^2}^2,
\end{equation*} 
which may go to infinity as $k\to\infty$ since we only know that
$
  \|u_{k}\|_{L^2}\lesssim h_k^{-s}.
$

Instead, we decompose
\[
|I|\leq  \eps_k\int
    \left| \nabla \left(\diver \tilde V_k\right)\right|\times \left| u_k\right| \times \left|(\eps_k\nabla-i\tilde V_k)u_k\right|
    	 = II+III
 \]
where
\[
 II=\eps_k\int
    \left| \nabla \left(\diver \tilde V_k\right)\right|\times \left(\left| u_k\right| - \sqrt{\tilde \rho_k} \right) \times \left|(\eps_k\nabla-i\tilde V_k)u_k\right|,
\]
\[
III =\eps_k\int
    \left| \nabla \left(\diver \tilde V_k\right)\right|\times \sqrt{ \tilde \rho_k} \times \left|(\eps_k\nabla-i\tilde V_k)u_k\right|.
\]
Then using H\"older inequality,
\[
| II |
	\leq\eps_k \left\|\nabla \diver \tilde V_k\right\|_{L^{2+\frac
            2m}}\left\||u_k|-\sqrt{\tilde \rho_k}
          \right\|_{L^{2m+2}}\|(\eps_k\nabla-i\tilde V_k)u_k\|_{L^2}. 
\]
We know thanks to Lemma~\ref{lem:tame} and Sobolev embedding, that for
$N$ sufficiently large,
\[
\left\|\nabla \diver \tilde V_k\right\|_{L^{2+\frac 2m}}\lesssim 
\left\|\nabla \diver \tilde V_k\right\|_{H^{N}}\lesssim 1+h_k^{3+m(s_c-s)-\frac{d}{2}}.
\]
Rewriting
\begin{equation*}
  3+m(s_c-s)-\frac{d}{2} = (m-1)\left(\frac{d}{2}-s\right)+2-s,
\end{equation*}
we see that this power is nonnegative as soon as $s\le 2$. 
Moreover, the last two factors are estimated respectively by
\begin{align*}
  \left\||u_k|-\sqrt{\tilde \rho_k}
          \right\|_{L^{2m+2}}^{2m+2}&=\int \left||u_k|-\sqrt{\tilde \rho_k}
                                      \right|^{2m+2}\\
  &\lesssim \int \left| |u_k|^2-\tilde
            \rho^k\right|^2\(|u_k|^{2m} + \(\tilde
          \rho_k\)^m\)\lesssim P_k,
\end{align*}
and
\begin{equation*}
  \|(\eps_k\nabla-i\tilde V_k)u_k\|_{L^2}^2=2K_k.
\end{equation*}
Therefore, 
\begin{equation*}
| II |\lesssim \varepsilon_kP_k^{\frac{1}{2m+2}}\sqrt{ K_k}
  \lesssim  K_k + \eps_k^2P_k^{\frac{1}{m+1}}\lesssim K_k + P_k +
    \eps_k^{2\frac{m+1}{m}}, 
\end{equation*}
where we have used Young inequality twice. 
Finally, using a similar strategy, we obtain the estimate
\[
| III |
\leq\eps_k \left\|\nabla \diver \tilde V_k\right\|_{L^{2}}
\left\|\sqrt{\tilde \rho_k} \right\|_{L^{\infty}}\|(\eps_k\nabla-i\tilde V_k)u_k\|_{L^2}.
\]
In view of Lemma~\ref{lem:MUK}, \eqref{eq:scaling-bulle},
Lemma~\ref{lem:tame}, and Sobolev embedding, we
 have
 \begin{equation*}
   \|\tilde \rho_k\|_{L^\infty([0,T];L^\infty)}\lesssim 1.
 \end{equation*}
To estimate $\nabla \diver \tilde V_k$, that is $\nabla^3 \tilde
\phi_k$, in $L^2$, we invoke Lemma~\ref{lem:tame} like above, to have,
since $s\le 2$,
\begin{equation*}
  \left\|\nabla \diver \tilde
    V_k\right\|_{L^\infty([0,T];L^{2})}\lesssim 1.
\end{equation*}
 Therefore,
 \[
| III |
	\lesssim 
        \eps_k \sqrt{ K_k}\lesssim \eps_k^2+K_k.
      \]
 We then conclude as in the proof of~\cite[Theorem 4.1]{ACMA} that
\[
\frac{\dd H_k}{\dd t}
	\lesssim H_k+\eps_k^2,
\]
and the proposition follows from a Gronwall argument.
\end{proof}

\section{Renormalized modulated energy estimate}
\label{sec:renormalized}

When  $s\geq s_{\rm sob}$, the approach from
Proposition~\ref{prop:modulated0} is not satisfying since the
modulated energy functional is large even at time $t=0$, due to the
kinetic part. It turns out
that this initial value is the main responsible for the failure of the
approach: we renormalize the modulated energy functional by removing
the initial bubbles $\ell<k$ (those which have large lower-order
Sobolev norms). More precisely, we introduce  
\[
\tilde H_k(t)=\tilde K_k(t)+P_k(t),
\]
where we denote the renormalized kinetic part by
\[
\tilde K_k(t)=\frac 12 \|(\eps_k\nabla -i\tilde V_k)u_k-\eps_k\nabla\varphi\|_{L^2}^2,
\quad \varphi=\sum_{\ell<k} \jmath \ast \varphi_{\ell,k},
\]
and we leave the potential part unchanged.
With this choice,  at $t=0$,
\[
\tilde H_k(0)
	\lesssim \varepsilon_k^2.
\]
Note that this strategy somehow meets the approach followed in the
proof of \cite[Proposition~2.6]{CaGa23}, as noted in
Remark~\ref{rem:low-modes}. 
\begin{proposition}[Renormalized modulated
  energy]\label{prop:modulated} Let $0<s<s_c$, with $s\le 2$. For
  every $t\in[0,T]$, there holds 
\[
\tilde H_k(t)\lesssim \eps_k^2.
\]
\end{proposition}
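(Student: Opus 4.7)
My plan is to follow the broad strokes of the proof of Proposition~\ref{prop:modulated0}, with adaptations reflecting the renormalization. First, I would check that $\tilde H_k(0)\lesssim\eps_k^2$: at $t=0$, $\tilde V_k=0$ and the renormalized kinetic part reduces to $\tilde K_k(0)=\tfrac{\eps_k^2}{2}\|\nabla(u_{0,k}-\varphi)\|_{L^2}^2$. The difference $u_{0,k}-\varphi$ contains only the bubble at scale $k$ and the higher-frequency bubbles $\ell\ge k$; Lemma~\ref{lem:norme-bulle-init} (applied with $s'=1$) bounds each piece uniformly in $\dot H^1$, and the tail over $\ell>k$ is summable since $(h_\ell/h_k)^s\to 0$ geometrically, giving $\tilde K_k(0)=O(\eps_k^2)$. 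The initial potential contribution $P_k(0)=O(\eps_k^2)$ has already been verified in the proof of Proposition~\ref{prop:modulated0}, provided $M$ is chosen large enough.

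Next, I would expand
\[
\tilde K_k=K_k-2\eps_k\RE\int\overline{(\eps_k\nabla-i\tilde V_k)u_k}\cdot\nabla\varphi\,\dd x+\tfrac{\eps_k^2}{2}\|\nabla\varphi\|_{L^2}^2,
\]
where the last term is $t$-independent. For $\tfrac{\dd}{\dd t}(K_k+P_k)$, I reproduce the identity from the proof of Proposition~\ref{prop:modulated0} (the decomposition into the terms denoted $I, I'$ and the potential-like remainder) and rerun the same estimates. This produces a bound of the form $K_k+P_k+\eps_k^2$, which by Young's inequality and the uniform bound $\|\nabla\varphi\|_{L^2}=O(1)$ becomes $\tilde H_k+\eps_k^2$, after writing $K_k\le 2\tilde K_k+2\eps_k^2\|\nabla\varphi\|_{L^2}^2$.

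The remaining ingredient is the time derivative of the cross term. Substituting the semiclassical Schr\"odinger equation~\eqref{eq:NLSsemi} for $\partial_t u_k$ and the symmetric hyperbolic system~\eqref{eq:Appint} for $\partial_t\tilde V_k$, and integrating by parts so as to place as many derivatives as possible onto the smooth, compactly supported function $\nabla\varphi$, one obtains a linear combination of integrals whose coefficients involve $\tilde V_k$, $\tilde A_k$ and their derivatives, controlled uniformly in $k$ by Lemma~\ref{lem:tame}. The main obstacle is that a na\"ive Cauchy--Schwarz step produces factors $\|u_k\|_{L^2}\lesssim h_k^{-s}$, which are unbounded; I would arrange the integrations by parts so that $u_k$ only appears through the combination $(\eps_k\nabla-i\tilde V_k)u_k$, whose $L^2$-norm is $\sqrt{2K_k}\lesssim\sqrt{\tilde K_k}+\eps_k$. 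Together with standard Young inequalities, this absorbs every new contribution into $\tilde H_k+\eps_k^2$. Combining everything gives $\tfrac{\dd}{\dd t}\tilde H_k\lesssim\tilde H_k+\eps_k^2$ on $[0,T]$, and Gronwall's lemma closes the argument starting from $\tilde H_k(0)=O(\eps_k^2)$.
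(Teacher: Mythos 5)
Your plan correctly identifies the need to bound $\tilde H_k(0)$ and to compute $\frac{\dd}{\dd t}\tilde H_k$ via a decomposition into $K_k+P_k$ and a cross term, and your check that $\tilde K_k(0)=O(\eps_k^2)$ is sound. But there is a genuine gap in the core of the argument, and it concerns precisely the range of $s$ that this proposition is designed to cover and that Proposition~\ref{prop:modulated0} does not.

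You assert that $\|\nabla\varphi\|_{L^2}=O(1)$, and consequently that $K_k\le 2\tilde K_k+2\eps_k^2\|\nabla\varphi\|_{L^2}^2\lesssim \tilde K_k+\eps_k^2$ and $\sqrt{2K_k}\lesssim\sqrt{\tilde K_k}+\eps_k$. Both are false for $s>1$. From Lemma~\ref{lem:norme-bulle-init}, for $\ell<k$ one has $\|\jmath\ast\varphi_{\ell,k}\|_{\dot H^1}\lesssim \frac{|\log h_k|}{|\log h_\ell|}(h_k/h_\ell)^{1-s}$, and when $s>1$ this is $(h_\ell/h_k)^{s-1}\gg 1$; summing over $\ell<k$ gives $\|\nabla\varphi\|_{L^2}\approx h_k^{1-s}|\log h_k|\to\infty$. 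Then $\eps_k^2\|\nabla\varphi\|_{L^2}^2\approx h_k^{2(m+1)(s_{\rm sob}-s)}|\log h_k|^{2m+2}$, which blows up exactly when $s>s_{\rm sob}$. So the triangle inequality does \emph{not} convert a $K_k$ on the right-hand side into $\tilde K_k+O(\eps_k^2)$, and the Gronwall argument as you phrase it closes only for $s<s_{\rm sob}$, i.e.\ precisely the range already handled by Proposition~\ref{prop:modulated0}. For the same reason, the promise that every occurrence of $u_k$ can be routed through $(\eps_k\nabla-i\tilde V_k)u_k$, ``whose $L^2$-norm is $\sqrt{2K_k}\lesssim\sqrt{\tilde K_k}+\eps_k$,'' does not hold.

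The missing ingredient is a \emph{spatial localization} of the approximate phase, not a global triangle inequality. The paper proves (Lemma~\ref{lem:V-loc}) that on the support of $1-\chi_k$ --- which contains $\operatorname{supp}\nabla\varphi$ --- one has $\|(1-\chi_k)|\nabla|^n\tilde V_k\|_{L^\infty}\lesssim\eps_k^2 h_k^{3+n}$, because $\tilde V_k$ starts from zero and is driven there only by the low-mode bubbles, whose amplitude is tiny. Every term in $\frac{\dd}{\dd t}L_k$ and in the refined bounds on $I,I'$ in which $\nabla\varphi$ appears also carries a factor of $\tilde V_k$ or $\nabla\tilde V_k$ or $\partial_t\tilde V_k$ evaluated on $\operatorname{supp}\nabla\varphi$; replacing the global Lemma~\ref{lem:tame} bound $O(1)$ by the localized bound $O(\eps_k^2 h_k^{3+n})$ is what turns the otherwise-divergent powers $h_k^{3-2s}$, $h_k^{2-2s}$ etc.\ into nonnegative powers (at worst $h_k^{4-2s}$, which is bounded for $s\le 2$), yielding $\bigl|\frac{\dd}{\dd t}\tilde H_k\bigr|\lesssim\tilde H_k+\eps_k^2h_k^{4-2s}$. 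Without this localization, several terms in your ``linear combination of integrals'' carry unbounded coefficients for $s\in(3/2,2]$, and the estimate does not close. This is the structural idea your proposal is missing.
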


The rest of this section if devoted to the proof of Proposition~\ref{prop:modulated}.
To emphasize the new terms compared to Section~\ref{sec:modulated}, we
develop 
\[
\tilde K_k(t)=K_k(t)- L_k(t) +\|\eps_k\nabla\varphi\|_{L^2}^2,
\]
\[
L_k(t)
	=2\RE\langle (\eps_k\nabla -i\tilde V_k)u_k, \eps_k\nabla\varphi\rangle,
\]
then we evaluate the time derivative of the two components $K_k(t)$
and $L_k(t)$ separately ($\nabla \varphi$ is obviously time-independent).

\subsection{Localized Sobolev norms of the WKB ansatz}

Let $\chi_k\in C_c^\infty(\R^d)$ such that $\chi_k$ localizes in space around the $k$-th bubble $x\approx \frac{x_k}{h_k}$:
\begin{align*}
   \chi_k\equiv 1& \quad\text{on }\quad \operatorname{supp}  (\jmath\ast \varphi_{k,k})=\operatorname{supp}( \jmath\ast\alpha_k),\\
   \chi_k\equiv 0 & \quad\text{on }\quad \operatorname{supp}   (\jmath\ast \varphi_{\ell,k}) \quad  \text{for}\quad  \ell\neq k.
\end{align*}

On the support of $(1-\chi_k)$, we will use the fact that the Sobolev
norms of $(\tilde V_k,\tilde A_k)$ are proportional to the restriction
of the Sobolev norms of its initial data $(0,(\tilde \rho_k)^m)$ to
the support of $(1-\chi_k)$, hence the higher order Sobolev norms are
decaying in $k$.
More precisely we establish the following lemma.
\begin{lemma}[Localized Sobolev norms of the approximate phase]\label{lem:V-loc} For every $n\geq 0$, there holds
\[
\|(1-\chi_k)|\nabla|^n \tilde V_k \|_{L^{\infty}}
     	\lesssim \eps_k^2h_k^{3+n}.
\]
\end{lemma}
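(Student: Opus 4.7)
My strategy is to exploit the superposition structure of $\tilde V_k$ together with the zero-propagation property of Lemma~\ref{lem:MUK}. I would first write $\tilde V_k = \sum_{\ell\le k} V_{\ell,k}$, where each $V_{\ell,k}$ solves \eqref{eq:Appint} with initial datum $(0,(\jmath\ast\varphi_{\ell,k})^m)$. Lemma~\ref{lem:MUK} ensures $\operatorname{supp} V_{\ell,k}(t,\cdot)\subset\operatorname{supp}(\jmath\ast\varphi_{\ell,k})$; by the defining properties of $\chi_k$ the cutoff $(1-\chi_k)$ annihilates the $\ell=k$ bubble and is identically $1$ on the (pairwise disjoint) supports of the remaining bubbles. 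Since integer-order $|\nabla|^n$ preserves supports, this reduces the bound to
\begin{equation*}
\|(1-\chi_k)|\nabla|^n\tilde V_k\|_{L^\infty} \le \sum_{\ell<k}\||\nabla|^n V_{\ell,k}\|_{L^\infty},
\end{equation*}
a sum running only over low modes which are spatially far from the $k$-th bubble.

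Next, I would Taylor-expand each $V_{\ell,k}$ at $t=0$. Reading off \eqref{eq:Appint} with $V_{\ell,k}(0)=0$ and $A_{\ell,k}(0)=(\jmath\ast\varphi_{\ell,k})^m$ yields $\dot V_{\ell,k}(0)=-\nabla|A_{\ell,k}(0)|^2$ and, since $\dot A_{\ell,k}(0)=0$, also $\ddot V_{\ell,k}(0)=0$; hence
\begin{equation*}
V_{\ell,k}(t) = -t\,\nabla(\jmath\ast\varphi_{\ell,k})^{2m} + t^3 R_{\ell,k}(t),
\end{equation*}
with $R_{\ell,k}$ controlled in Sobolev spaces thanks to the local smoothness from Lemma~\ref{lem:MUK}. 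Applying $|\nabla|^n$, Moser's inequality, and the explicit scaling bounds of Lemma~\ref{lem:norme-bulle-init} (together with its pointwise companion $\|\jmath\ast\varphi_{\ell,k}\|_{L^\infty}\lesssim \frac{|\log h_k|}{|\log h_\ell|}(h_k/h_\ell)^{d/2-s}$), I obtain estimates of the form
\begin{equation*}
\||\nabla|^n V_{\ell,k}(t)\|_{L^\infty}\lesssim |\log h_k|^{2m}\!\(\frac{h_k}{h_\ell}\)^{\!2m(s_c-s)+n+3}\!,
\end{equation*}
where the exponent combines $(2m-1)(d/2-s)$ ($L^\infty$ powers) with $(d/2-s)+n+1$ (the derivative factor) and uses the identity $2m(d/2-s)=2m(s_c-s)+2$.

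Finally, summing over $\ell$, the background mode $\ell=0$---analyzed via the explicit form $\varphi_{0,k}(x)=|\log h_k|h_k^{d/2-s}\varphi_0(h_k x)$---produces exactly
\begin{equation*}
\||\nabla|^n V_{0,k}(t)\|_{L^\infty}\lesssim |\log h_k|^{2m}h_k^{2m(s_c-s)+n+3}\simeq \eps_k^2 h_k^{n+3},
\end{equation*}
matching the claimed right-hand side. The hard part is the precise matching of exponents: identifying that the dominant contribution indeed comes from the $\ell=0$ term and that the other $k_0\le \ell<k$ contributions, which decay super-exponentially in $k-\ell$ thanks to the separation of scales $h_\ell\gg h_k$, are absorbed. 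This relies on the algebraic identities $m(d/2-s)=m(s_c-s)+1$ and $\eps_k=h_k^{m(s_c-s)}|\log h_k|^m$, together with careful bookkeeping of the $\ell$-dependence in Lemma~\ref{lem:norme-bulle-init}. The $O(t^3)$ remainder is of strictly smaller order and is handled by the same scheme without additional difficulty.
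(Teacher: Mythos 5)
Your overall strategy is structurally the same as the paper's: exploit the superposition $\tilde V_k=\sum_{\ell\le k}V_{\ell,k}$, use the zero speed of propagation from Lemma~\ref{lem:MUK} so that $(1-\chi_k)$ annihilates the $\ell=k$ bubble, and use the facts $V_{\ell,k}(0)=0$, $\ddot V_{\ell,k}(0)=0$ (the paper achieves the same thing by integrating \eqref{eq:Appint} once in time for $\tilde V_k$ and once more for the convective part, and bounding the integrands over $[0,T]$ by the localized version of Lemma~\ref{lem:tame}, rather than by a Taylor expansion with a remainder $R_{\ell,k}$). Up to that presentational difference, the two arguments are equivalent.

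However, the final step of your argument contains an internal inconsistency that constitutes a real gap. You correctly record that $\||\nabla|^n V_{\ell,k}(t)\|_{L^\infty}$ scales like $(h_k/h_\ell)^{2m(s_c-s)+n+3}$ (up to logarithmic factors) and that this quantity decays super-exponentially in $k-\ell$, yet you conclude that the $\ell=0$ contribution dominates. These two assertions contradict each other: since $h_\ell=e^{-M^\ell}$ decreases in $\ell$, the ratio $h_k/h_\ell$ \emph{increases} toward $1$ as $\ell\uparrow k$, so $(h_k/h_\ell)^{2m(s_c-s)+n+3}$ is largest at $\ell=k-1$, not at $\ell=0$. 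In fact $h_k/h_{k-1}=h_k^{(M-1)/M}\gg h_k$, so the $\ell=k-1$ term is larger than the $\ell=0$ term by a factor of order $h_{k-1}^{-(2m(s_c-s)+n+3)}$, which blows up super-exponentially in $k$. Consequently your scheme actually delivers the bound $\lesssim (h_k/h_{k-1})^{2m(s_c-s)+n+3}$, which is strictly weaker than the stated $\eps_k^2h_k^{3+n}\sim h_k^{2m(s_c-s)+n+3}|\log h_k|^{2m}$. (The paper's own computation appears to carry the same implicit identification of $h_k$ with $h_k/h_{k-1}$; the weaker $(h_k/h_{k-1})$-bound still suffices for all the applications in Section~\ref{sec:renormalized} once $M$ is taken large, but if you keep the stated exponent you must at least not appeal to $\ell=0$ as the dominant mode.) You should either state the result in terms of $h_k/h_{k-1}$ and check it still feeds correctly into the linear and nonlinear estimates, or explain why the $\ell=k-1$ contribution can nevertheless be absorbed; as written the argument does not close.

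A secondary issue: you justify restricting the sum to $\ell<k$ by saying that ``integer-order $|\nabla|^n$ preserves supports.'' This is only true for even $n$; for odd $n$ the fractional Laplacian $|\nabla|^n=(-\Delta)^{1/2}(-\Delta)^{(n-1)/2}$ is nonlocal. Both the paper and you implicitly read $|\nabla|^n$ as the family of $n$-th order partial derivatives $D^n$, which is fine, but the support-preservation claim should be phrased in those terms.
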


\begin{proof}
We note that at time $t=0$, we have $|\nabla|^n \tilde
V_{k\mid t=0}=0$. Hence we use the equation~\eqref{eq:Appint} satisfied by
$\tilde V_k$: 
\begin{equation*}
\frac{\dd}{\dd t}|\nabla|^n \tilde V_k
	=-|\nabla|^n\( \tilde V_k\cdot \nabla \tilde V_k- \nabla( |\tilde A_k|^2)\).
\end{equation*}
Then we note that on the support of $(1-\chi_k)$, the Sobolev norms of $|\tilde A_k|^2 $ are proportional to the Sobolev norms of the restriction of its initial data $(\tilde \rho_k)^{m}$ according to Lemma~\ref{lem:tame}. Therefore the equality right below Lemma~\ref{lem:tame} taking into account the localization implies the estimate
\[
\|(1-\chi_k)|\nabla|^n \nabla( |\tilde A_k|^2)\|_{L^{\infty}}
	\lesssim h_k^{1+n+2m(\frac d2-s)}=h_k^{3+n}\eps_k^2.
\]
Let us now treat the convective part. Similarly, this term vanishes at
time $t=0$ hence we compute one more time derivative: 
\begin{multline*}
\frac{\dd}{\dd t}|\nabla|^n \( \tilde V_k\cdot \nabla \tilde V_k\)
	=-|\nabla|^n\( \(\tilde V_k\cdot \nabla \tilde V_k- \nabla( |\tilde A_k|^2)\)\cdot\nabla\tilde V_k\)\\
	-|\nabla|^n\( \tilde V_k \cdot\nabla\(\tilde V_k\cdot \nabla \tilde V_k- \nabla( |\tilde A_k|^2)\)\).
\end{multline*}
The terms involving $\tilde A_k$ are small thanks to the same argument
as before. Regarding the terms involving only $\tilde V_k$, the terms are of the form 
\[
\||\nabla|^n( (\tilde V_k\cdot \nabla \tilde V_k)\cdot\nabla \tilde V_k)\|_{L^{\infty}}
	\lesssim h_k^{2+n}h_k^{3m(\frac d2-s)}
	\lesssim h_k^{5+n}\eps_k^{3},
      \]
hence the lemma after integration in time. 
\end{proof}

\subsection{Linear part}
We compute
\begin{align*}
\frac{\dd}{\dd t}L_k(t)
	=&2\RE\langle (\eps_k\nabla -i\tilde V_k)\partial_t u_k, \eps_k\nabla\varphi\rangle
	+2\RE\langle  -i \partial_t\tilde V_k u_k, \eps_k\nabla\varphi\rangle\\
	=&-\RE\langle i\varepsilon_k \Delta u_k,(\eps_k\nabla -i\tilde V_k)  \eps_k \nabla\varphi\rangle
	+2\RE\langle i|u_k|^{2m}u_k,  (\eps_k\nabla -i\tilde V_k)\cdot \nabla\varphi\rangle\\
	&-2\RE\langle  i \partial_t\tilde V_k u_k, \eps_k\nabla\varphi\rangle.
\end{align*}

In the subsequent estimates, we leave out the dependence of $\eps_k$
upon $\log(h_k)$ (see \eqref{eq:epsilon})  to lighten the notations,
since, as pointed out before, the logarithmic correction is irrelevant
when we deal with open algebraic conditions. 
\smallbreak

$\bullet$ First, we note that
\[
\left|\langle i\eps_k\Delta u_k,  \eps_k^2 \Delta \varphi\rangle\right|
	\leq \|\eps_k\nabla u_k\|_{L^2}\| \eps_k^2 \nabla\Delta\varphi\|_{L^2}
	\lesssim \eps_k^3 h_k^{4-2s},
 \]
 where we have used \eqref{eq:H1} to estimate $\eps_k\nabla u_k$, and
 the first part of 
 Lemma~\ref{lem:norme-bulle-init} with $s'=3$ to estimate
 $\nabla\Delta\varphi$.  
\smallbreak

$\bullet $ We now estimate $\langle  i|u_k|^{2m}u_k, \eps_k\Delta\varphi\rangle$. First, we decompose
\[
\langle  i|u_k|^{2m}u_k, \eps_k\Delta\varphi\rangle
	=\langle  i(|u_k|^{2m}-|\tilde a_k|^{2m})u_k, \eps_k\Delta\varphi\rangle
	+\langle  i|\tilde a_k|^{2m}u_k, \eps_k\Delta\varphi\rangle.
\]
The first term on the right hand side has the upper bound
\[
\left|\langle  i(|u_k|^{2m}-|\tilde a_k|^{2m})u_k,
  \eps_k\Delta\varphi\rangle \right|
	\lesssim \||u_k|^{2m}-|\tilde a_k|^{2m}\|_{L^{\frac{2m+2}{2m}}}\|u_k\|_{L^{2m+2}}\|\eps_k\Delta\varphi\|_{L^{2m+2}}.
\]
This implies, in view of \eqref{eq:H1}, the embedding $\dot H^{s_{\rm
    sob}}\hookrightarrow L^{2m+2}$, Lemma~\ref{lem:norme-bulle-init},
and Young inequality, 
\begin{align*}
\left|\langle  i(|u_k|^{2m}-|\tilde a_k|^{2m})u_k, \eps_k\Delta\varphi\rangle\right|
	&\lesssim P_k^{\frac{2m}{2m+2}} h_k^{\frac d2-s-\frac{d}{2m+2}} \eps_kh_k^{2+\frac d2-s-\frac{d}{2m+2}}\\
	&\lesssim P_k+ \eps_k^{m+1}h_k^{2(m+1)+2(m+1)(\frac d2-s)-d}\\	
	&\lesssim P_k+ \eps_k^{m+1}h_k^{2(m+1)+2m(\frac d2-s)-2s}.
\end{align*}
We note that $2m(\frac d2-s)-2s=2+2m(s_c-s)$, so $h_k^{2m(\frac
  d2-s)-2s} = \eps_k^2h_k^2$. 
Let us now investigate the second term $\langle  i|\tilde a_k|^{2m}u_k, \eps_k\Delta\varphi\rangle$:
\[
\left|\langle  i|\tilde a_k|^{2m}u_k, \eps_k\Delta\varphi\rangle\right|
	\lesssim \eps_k\|\tilde a_k\|_{L^{\infty}}^{2m}\|u_k\|_{L^2}\|\Delta\varphi\|_{L^2}
	\lesssim \eps_k h_k^{2m\left(\frac d2-s\right)+2-2s}=\eps_k^3 h_k^{4-2s}.
\]
We conclude that when $t\lesssim 1$,
\[
\left|\langle  i|u_k|^{2m}u_k, \eps_k\Delta\varphi\rangle\right|
	\lesssim P_k
	+\eps_k^3 h_k^{4-2s}.
\]

$\bullet$ Then we note that
\[
\left|\langle \varepsilon_k \Delta u_k,\tilde V_k \cdot \eps_k \nabla\varphi\rangle\right|
	= \eps_k^2\left|\langle  u_k,\Delta(\tilde V_k \cdot  \nabla\varphi)\rangle\right|.
\]
As a consequence, since $\nabla \varphi = (1-\chi_k)\nabla\varphi$, using the conservation of $L^2$ norm and 
Lemma~\ref{lem:V-loc},

\[
\left|\langle \varepsilon_k \Delta u_k,\tilde V_k \cdot \eps_k \nabla\varphi\rangle\right|
	\lesssim \eps_k^2 h_k^{3-2s}\eps_k^2 h_k^{3}.
\]

$\bullet$  Moreover, \eqref{eq:H1} and Lemma~\ref{lem:V-loc} yield
\begin{multline*}
\left|\langle i|u_k|^{2m}u_k, i\tilde V_k\cdot \nabla\varphi\rangle\right|
	\lesssim \|u_k\|_{L^{2m+2}}^{2m+1}\|(1-\chi_k)\tilde V_k\|_{L^{\infty}}\|\nabla \varphi\|_{L^{2m+2}}\\
	\lesssim \eps_k^2 h_k^{2-2s+1} \eps_k^2h_k^3.
\end{multline*}

$\bullet$ Finally, we estimate $\langle  i \partial_t\tilde V_k u_k, \eps_k\nabla\varphi\rangle$ on the support of $\varphi$. 

Using the equation~\eqref{eq:Appint} satisfied by $\tilde V_k$ and
Lemma~\ref{lem:V-loc}, we have the estimate
\[
\|\partial_t \tilde V_k\|_{L^{\infty}}
	\lesssim \eps_k^4 h_k^7,
\]
and so 
\[
\left|\langle  i \partial_t\tilde V_k u_k, \eps_k\nabla\varphi\rangle\right|
	\lesssim \eps_k^5 h_k^{8-2s}.
\]

$\bullet$ In conclusion,
\[
\left| \frac{\dd}{\dd t}L_k(t)\right|
	\lesssim P_k(t)+ \eps_k^3 h_k^{4-2s}.
\]

\subsection{Nonlinear part}

We now also replace $K_k$ by $\tilde K_k$ in the upper bounds for
estimating $\frac{\dd K_k}{\dd t}$ that appear in the proof
of~\cite[Theorem~4.1]{ACMA}.

$\bullet$ First, we treat the following problematic term evidenced in the proof of Proposition~\ref{prop:modulated0},
 \begin{equation*}
 I = \eps_k^2\int
     \nabla \(\diver \tilde V_k\)\cdot \nabla (|u_k|^2).
\end{equation*}
The same argument as in Section~\ref{sec:modulated} would lead to the inequalities
\begin{align*}
 &\left| \eps_k\int
     \nabla \(\diver \tilde V_k\)\cdot\( \overline u_k (\eps_k\nabla u_k-i\tilde V_k u_k-\eps_k\nabla \varphi \right) \right|\\
&\qquad\qquad      \lesssim \eps_k
      (1+h_k^{2-s})\(P_k^{\frac{1}{2m+2}}+1\)\sqrt{\tilde K_k}, 
\end{align*}
\begin{equation*}
 \left| \eps_k^2\int
     \nabla \(\diver \tilde V_k\)\cdot\( \overline u_k \nabla \varphi\) \right|
      \lesssim \eps_k^2 h_k^{3-2s}\(P_k^{\frac{1}{2m+2}}+1\).
\end{equation*}
However, since we want to tackle exponents $s$ that may be larger than
$\frac 32$, we refine slightly the argument in order to improve the
estimate.  Indeed, as we wish $s$ to be arbitrarily close to $s_c$,
the power of $\eps_k$ is essentially useless, and we need the
remaining power of $h_k$ to be nonnegative. 

Lemma~\ref{lem:V-loc} taken with two derivatives leads to
\[
\|(1-\chi_k)\nabla \diver \tilde V_k \|_{L^{\infty}}
     	\lesssim \eps_k^2h_k^{5},
\]
and applying this estimate in the formula for
  \begin{equation*}
 I 
	= \eps_k\int
     \nabla \(\diver \tilde V_k\)\cdot\( \overline u_k (\eps_k\nabla-i\tilde V_k)u_k
  +u_k \overline {(\eps_k\nabla-i\tilde V_k)u_k} \).
\end{equation*}
we get
\[
| I |\lesssim \tilde H_k + \eps_k^2 h_k^{4-2s}.
\]

$\bullet$ We also need to estimate the second problematic term from the proof of Proposition~\ref{prop:modulated}
\[
I'
	:=-\RE\sum_{i,j}\int\partial_j  (\tilde V_k)_i (\eps_k\partial_j u_k-i (\tilde V_k)_j u_k)(\overline{\eps_k\partial_i u_k-i (\tilde V_k)_i u_k}).
\]
We decompose again $(\eps_k\nabla-i\tilde V_k)u_k=(\eps_k\nabla-i\tilde V_k)u_k-\eps_k\nabla \varphi +\eps_k\nabla\varphi$ to get
\begin{multline*}
| I' |
	\lesssim \tilde K_k 
	+ 2\sum_{i,j}\int|\partial_j  (\tilde V_k)_i| |\eps_k\partial_j\varphi| |\eps_k\partial_i u_k-i (\tilde V_k)_i u_k|\\
	+\sum_{i,j}\int|\partial_j  (\tilde V_k)_i| |\eps_k\partial_j \varphi| |\eps_k\partial_i \varphi|.
\end{multline*}
The second term on the right hand side can be estimated from the first and third terms on the right hand side.
Finally, Lemma~\ref{lem:V-loc}  implies 
\[
\|(1-\chi_k)\partial_j \tilde V_k \|_{L^{\infty}}
     	\lesssim \eps_k^2h_k^{4},
\]
so that ,since $(1-\chi_k)=1$ on the support of $\varphi$,
\[
\int|\partial_j  (\tilde V_k)_i| |\eps_k\partial_j \varphi| |\eps_k\partial_i \varphi|
	\lesssim \eps_k^2 h_k^{6-2s}.
\]
This leads to
\[
| I' |
	\lesssim \tilde K_k + \eps_k^2 h_k^{6-2s}.
\]

$\bullet$ In conclusion, we get 
\[
\left|\frac{\dd \tilde H_k}{\dd t}\right|
	\lesssim \tilde H_k + \eps_k^2h_k^{4-2s},
\]
hence Proposition~\ref{prop:modulated} by integration.

\section{Localized estimates and conclusion}
\label{sec:local}

\subsection{Localized mass estimate}

Like in Section~\ref{sec:renormalized}, let $\chi_k\in
C_c^\infty(\R^d)$ such that $\chi_k$ localizes in space around the
$k$-th bubble $x\approx \frac{x_k}{h_k}$: 
\begin{align*}
   \chi_k\equiv 1& \quad\text{on }\quad \operatorname{supp}  (\jmath\ast \varphi_{k,k})=\operatorname{supp}( \jmath\ast\alpha_k),\\
   \chi_k\equiv 0 & \quad\text{on }\quad \operatorname{supp}   (\jmath\ast \varphi_{\ell,k}) \quad  \text{for}\quad  \ell\neq k.
\end{align*}
More precisely, we choose a
cutoff function $\chi$ independent of $k$, and a scaling parameter
$R_k\gtrsim 1$ (which will be crucial in the alternative argument presented
in appendix), and set
\[
\chi_k(x)=\chi\left(\frac{x-x_k}{R_k}\right).
\]
We recall that in view of Lemma~\ref{lem:MUK}, for $\ell\le k$ and
$t\in [0,T]$,
\[
 \operatorname{supp} \nabla
  \phi_{\ell,k}(t,\cdot)\cup \operatorname{supp}  \rho_{\ell,k} (t,\cdot)
  	\subset  \operatorname{supp}   \jmath\ast \varphi_{\ell,k}.
\]
We consider the mass localized near the bubble living at scale $k$,
\begin{equation*}
   M_{k}(t) := \|\chi_k u_k(t)\|_{L^2}^2,
\end{equation*}
and show that is bounded. 

\begin{lemma}[Localized mass estimate]\label{lem:loc-mass}
There exists $C>0$ such that for every $t\in [0,T]$ and $k\geq 1$,
\[
M_k(t)\leq C M_k(0)\lesssim 1.
\]
\end{lemma}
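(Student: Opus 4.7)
The strategy is to differentiate $M_k$ in time, identify the natural current that appears after the nonlinearity cancels out and integration by parts, and then exploit the fact that $\operatorname{supp}\nabla\chi_k$ lies entirely outside the supports of all the bubbles. On that region the approximate phase/amplitude $(\tilde V_k,\tilde a_k)$ and the function $\varphi$ all vanish (by the zero speed of propagation of Lemma~\ref{lem:MUK} and the construction of $\varphi$), so the only surviving contribution can be absorbed into the renormalized modulated energy from Proposition~\ref{prop:modulated}.

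Working formally, from \eqref{eq:NLSsemi} the nonlinear term is real, so
\[
\frac{\dd}{\dd t}M_k(t)=2\RE\int \chi_k^2\,\overline{u_k}\,\d_t u_k\,\dd x=\eps_k\IM\int \chi_k^2\,\overline{u_k}\,\Delta u_k\,\dd x.
\]
Integrating by parts and using that $|\nabla u_k|^2$ is real, this becomes
\[
\frac{\dd}{\dd t}M_k(t)=-2\eps_k\IM\int \chi_k\,\overline{u_k}\,\nabla\chi_k\cdot\nabla u_k\,\dd x=-2\int \chi_k\,\nabla\chi_k\cdot J_k\,\dd x,
\]
where $J_k=\IM(\overline{u_k}\,\eps_k\nabla u_k)$ is the semiclassical current. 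Now write
\[
J_k=\IM\bigl(\overline{u_k}\bigl[(\eps_k\nabla-i\tilde V_k)u_k-\eps_k\nabla\varphi\bigr]\bigr)+\IM(\overline{u_k}\,\eps_k\nabla\varphi)+\tilde V_k|u_k|^2.
\]
The key geometric observation is that $\nabla\chi_k$ vanishes both on $\operatorname{supp}(\jmath\ast\varphi_{k,k})$ and on $\operatorname{supp}(\jmath\ast\varphi_{\ell,k})$ for $\ell\neq k$, hence $\operatorname{supp}\nabla\chi_k$ is disjoint from the union of all bubble supports. By Lemma~\ref{lem:MUK} applied to each $(\phi_{\ell,k},a_{\ell,k})$, and the superposition principle of Section~3.2, both $\tilde V_k(t,\cdot)$ and $\tilde a_k(t,\cdot)$ are supported in $\bigcup_{\ell\le k}\operatorname{supp}(\jmath\ast\varphi_{\ell,k})$ for $t\in[0,T]$; similarly $\nabla\varphi$ is supported in $\bigcup_{\ell<k}\operatorname{supp}(\jmath\ast\varphi_{\ell,k})$. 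Therefore the last two terms in the decomposition of $J_k$ contribute zero when paired with $\chi_k\nabla\chi_k$.

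It remains to estimate the first term. By Cauchy--Schwarz, using $|\chi_k\nabla\chi_k|\le\|\nabla\chi_k\|_{L^\infty}\chi_k\lesssim R_k^{-1}\chi_k$,
\[
\left|\frac{\dd}{\dd t}M_k(t)\right|\lesssim \frac{1}{R_k}\,\|\chi_k u_k\|_{L^2}\,\bigl\|(\eps_k\nabla-i\tilde V_k)u_k-\eps_k\nabla\varphi\bigr\|_{L^2}\lesssim \frac{\eps_k}{R_k}\sqrt{M_k(t)\,\tilde K_k(t)},
\]
and Proposition~\ref{prop:modulated} gives $\tilde K_k(t)\lesssim\eps_k^2$. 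This yields $\frac{\dd}{\dd t}\sqrt{M_k}\lesssim \eps_k^2/R_k$, so that on $[0,T]$,
\[
\sqrt{M_k(t)}\le \sqrt{M_k(0)}+C\eps_k^2/R_k.
\]
Since $R_k\gtrsim 1$ and, by the disjoint-support hypothesis and $\chi_k\equiv 1$ on $\operatorname{supp}(\jmath\ast\varphi_{k,k})$, $M_k(0)=\|\jmath\ast\varphi_{k,k}\|_{L^2}^2$ is bounded independently of $k$, we conclude $M_k(t)\le CM_k(0)\lesssim 1$.

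The only genuinely delicate point is the geometric/zero-speed statement that $\tilde V_k$ and $\nabla\varphi$ really vanish on $\operatorname{supp}\nabla\chi_k$: this is where the use of $(\phi_{\ell,k},a_{\ell,k})$ (convolved initial data, still compactly supported) rather than $(\uphi_{\ell,k},\ua_{\ell,k})$ is essential, so that Lemma~\ref{lem:MUK} applies directly. Everything else is a clean combination of an energy identity with the already-established renormalized modulated energy estimate.
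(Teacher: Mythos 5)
Your argument reproduces the paper's proof essentially step for step: differentiate $M_k$, integrate by parts so the real nonlinearity drops out, insert $\tilde V_k$ and $\nabla\varphi$ (both vanishing on $\operatorname{supp}\nabla\chi_k$ by the zero-speed Lemma~\ref{lem:MUK} and the construction of $\varphi$), and close with Cauchy--Schwarz and Proposition~\ref{prop:modulated}. Two cosmetic slips that do not affect the conclusion: the first identity should carry a minus sign, $-\eps_k\IM\int\chi_k^2\,\overline{u_k}\,\Delta u_k$, and the Cauchy--Schwarz step yields $\frac{1}{R_k}\sqrt{M_k\tilde K_k}$, not $\frac{\eps_k}{R_k}\sqrt{M_k\tilde K_k}$ --- either way $\tilde K_k\lesssim\eps_k^2$ gives $\sqrt{M_k(t)}-\sqrt{M_k(0)}=o_k(1)$ and the bound follows.
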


\begin{proof}
At $t=0$, we have 
\[
M_k(0)=\|\jmath \ast \alpha_k\|_{L^2}= \|\jmath\ast \alpha\|_{L^2}.
\]
Let us now estimate the time derivative of the localized mass.
Using the equation satisfied by $u_k$, \eqref{eq:NLSsemi}, we get
\begin{equation*}
\frac{\dd M_k}{\dd t}= 2\RE \int \chi_k^2\bar u_k\d_t u_k=\eps_k \RE
\int \chi_k^2\bar u_k i\Delta u_k
	 = 2\eps_k\IM \int \chi_k \bar u_k \nabla \chi_k\cdot
              \nabla u_k.
\end{equation*}
Given that $\nabla\chi_k=0$ on the support of $\tilde \phi_k$ and of $\varphi$, we have  
\begin{equation*}
\frac{\dd M_k}{\dd t}
    = 2\IM \int \chi_k \bar u_k \nabla \chi_k \cdot
       \( \(\eps_k\nabla u_k-i \nabla \tilde\phi_k\)u_k-\eps_k\nabla \varphi\).
 \end{equation*}
Cauchy-Schwarz and Young inequalities yield
\begin{equation*}
\frac{\dd M_k}{\dd t}
	\lesssim \|\nabla\chi_k\|_{L^{\infty}} \left(\tilde H_k+M_k\right),
      \end{equation*}
where $\tilde H_k$ is small, from
Proposition~\ref{prop:modulated}. Gronwall lemma implies that for
$t\in [0,T]$, 
\[
M_k(t)\lesssim M_k(0)+o_k(1)\lesssim M_k(0).\qedhere
\]
\end{proof}

\subsection{Lower and upper bounds on the $\dot{H}^1$ norm}

\begin{lemma}[Bounds on the local energy]\label{lem:H1}
There exist $C>0$ and $\tau\in [0,T]$ such that for every  $k\geq 1$, 
\[
\frac{1}{C}\leq  \| (\eps_k\nabla)(\chi_k  u_k)(\tau)\|_{L^2}\leq C.
\]
\end{lemma}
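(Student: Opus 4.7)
The overall plan is to identify $\eps_k\nabla(\chi_k u_k)$, up to $O(\eps_k)$ in $L^2$, with the leading term $iV_{k,k}u_k$, then to reduce this to the purely hydrodynamic quantity $\|V_{k,k}a_{k,k}\|_{L^2}$ via the modulated energy, and finally to exploit translation invariance to get a $k$-independent positive lower bound for some well chosen $\tau$.

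First I would decompose
\[
\eps_k\nabla(\chi_k u_k)=\chi_k(\eps_k\nabla-i\tilde V_k)u_k+i\chi_k\tilde V_k u_k+\eps_k(\nabla\chi_k)u_k.
\]
Since $\varphi=\sum_{\ell<k}\jmath\ast\varphi_{\ell,k}$ is supported where $\chi_k$ vanishes, the first term coincides with $\chi_k\bigl[(\eps_k\nabla-i\tilde V_k)u_k-\eps_k\nabla\varphi\bigr]$, whose $L^2$-norm is $\le\sqrt{2\tilde K_k}\lesssim\eps_k$ by Proposition~\ref{prop:modulated}. The zero-speed property of Lemma~\ref{lem:MUK} and the superposition principle of Section~\ref{sec:bubble} give $\chi_k\tilde V_k=V_{k,k}$ with $\mathrm{supp}(V_{k,k})\subset\{\chi_k=1\}$, so the middle term is exactly $iV_{k,k}u_k$ and has $L^2$-norm $\lesssim\|V_{k,k}\|_{L^\infty}M_k(t)^{1/2}\lesssim 1$. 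For the last term I would enlarge $\chi_k$ into a cutoff $\widetilde\chi_k\equiv 1$ on $\mathrm{supp}(\chi_k)$ and still vanishing on the remaining bubbles, so that the argument of Lemma~\ref{lem:loc-mass} applies to $\widetilde\chi_k$; since $\mathrm{supp}(\nabla\chi_k)\subset\{\widetilde\chi_k=1\}$ this yields $\|\eps_k(\nabla\chi_k)u_k\|_{L^2}\lesssim\eps_k$. A cross-term estimate then gives
\[
\|\eps_k\nabla(\chi_k u_k)\|_{L^2}^2=\|V_{k,k}u_k\|_{L^2}^2+O(\eps_k).
\]

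Next I would compare $\|V_{k,k}u_k\|_{L^2}^2$ to the hydrodynamic quantity $\|V_{k,k}a_{k,k}\|_{L^2}^2$. Since $\tilde a_k=a_{k,k}$ on $\mathrm{supp}(V_{k,k})$,
\[
\big|\|V_{k,k}u_k\|_{L^2}^2-\|V_{k,k}a_{k,k}\|_{L^2}^2\big|\le\|V_{k,k}\|_{L^\infty}^2\int_{\mathrm{supp}(V_{k,k})}\bigl||u_k|^2-\tilde\rho_k\bigr|\,\dd x.
\]
Splitting $|u_k|^2-\tilde\rho_k=(|u_k|+\sqrt{\tilde\rho_k})(|u_k|-\sqrt{\tilde\rho_k})$ and applying Cauchy--Schwarz on the compact set $\mathrm{supp}(V_{k,k})$, the first factor is bounded in $L^2$ by Lemma~\ref{lem:loc-mass}, while the second factor, controlled in $L^{2m+2}$ by $P_k^{1/(2m+2)}\lesssim\eps_k^{1/(m+1)}$ as already used in Section~\ref{sec:modulated}, is bounded in $L^2$ on that compact set by Hölder's inequality. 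Hence the right-hand side is $O(\eps_k^{1/(m+1)})$.

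Finally, since $\alpha_k(x)=\alpha(x-x_k/h_k)$ is merely a translate of the $k$-independent profile $\alpha$, the pair $(a_{k,k},\phi_{k,k})$ is the translate of the unique $(a,\phi)\in\mathcal C([0,T];H^\infty)$ solving~\eqref{eq:bulle-univ-reg} with initial datum $\jmath\ast\alpha$, so $\|V_{k,k}a_{k,k}\|_{L^2}^2=\|\nabla\phi(\tau)a(\tau)\|_{L^2}^2$ does not depend on $k$. This quantity vanishes at $\tau=0$, but $\d_t\nabla\phi_{|t=0}=-\nabla(|\jmath\ast\alpha|^{2m})\not\equiv 0$, so it becomes strictly positive for any sufficiently small $\tau>0$. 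Choosing such a $\tau\in(0,T]$ and combining the three steps, for $k$ large enough
\[
\|\eps_k\nabla(\chi_k u_k)(\tau)\|_{L^2}^2=\|\nabla\phi(\tau)a(\tau)\|_{L^2}^2+o(1),
\]
which is bounded above and below by positive constants; the finitely many remaining values of $k$ can be absorbed into the constant $C$. The main obstacle is the middle step: the modulated-energy control $P_k\lesssim\eps_k^2$ provides only $L^{2m+2}$-type information on $|u_k|-\sqrt{\tilde\rho_k}$, and turning it into usable $L^1$ information on $|u_k|^2-\tilde\rho_k$ on the compact set $\mathrm{supp}(V_{k,k})$ requires combining the compactness of that support with the localized mass estimate from Lemma~\ref{lem:loc-mass}.
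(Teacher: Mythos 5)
Your proposal is correct and follows essentially the same route as the paper: decompose $\eps_k\nabla(\chi_ku_k)$ into the renormalized kinetic residual, the main term $i\chi_k\tilde V_ku_k=iV_{k,k}u_k$, and the $\nabla\chi_k$ boundary term, then control the residual by Proposition~\ref{prop:modulated}, the boundary term by Lemma~\ref{lem:loc-mass} with an enlarged cutoff, and reduce the main term to $\|a\nabla\phi\|_{L^2}$ at a suitable $\tau$ using the potential part $P_k$. The only minor variations are cosmetic (you present it as an additive identity with an $O(\eps_k)$ cross term rather than the paper's reverse triangle inequality, and you give an explicit derivative-at-$\tau=0$ argument for the positivity of $\|a(\tau)\nabla\phi(\tau)\|_{L^2}$ where the paper cites \cite[Lemma~5.3]{ACMA}).
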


\begin{proof}
We develop
\[
(\eps_k\nabla)(\chi_k u_k)
	= \eps_k u_k\nabla \chi_k +   \chi_k \eps_k\nabla u_k.
\]
Let $\tilde\chi\in\mathcal{C}_c^{\infty}$ such that $\tilde\chi\geq |\nabla \chi|$ on $\operatorname{supp}(\chi)$, and such that 
\[
\tilde \chi_k=\tilde \chi\left(\frac{x-x_k}{R_k}\right)
\]
also satisfies
\begin{align*}
   \tilde\chi_k\equiv 1 &\quad\text{on }\quad \operatorname{supp} (\jmath\ast \varphi_{k,k}),\\
   \tilde\chi_k\equiv 0 &\quad\text{on }\quad \operatorname{supp} (\jmath\ast \varphi_{\ell,k}), \quad \ell\neq k.
\end{align*}
Then Lemma~\ref{lem:loc-mass} applied with $\tilde \chi_k$ instead of
$\chi_k$ yields
\[
\|\nabla \chi_ku_k(t)\|_{L^2}
	\lesssim \|\tilde\chi_k u_k(t)\|_{L^2}
	\lesssim 1.
\]
Moreover, since $\varphi$ and $\chi$ have disjoint supports, we have the inequality
\[
\| \chi_k \eps_k\nabla u_k\|_{L^2}
	\gtrsim \| \chi_k \tilde V_k u_k\|_{L^2}-\| \chi_k ((\eps_k\nabla-i\tilde V_k) u_k-\eps_k\nabla\varphi)\|_{L^2}.
\]
Now we argue like in~\cite[Lemma 5.3]{ACMA}: in view of the coupling
in \eqref{eq:bulle-univ-reg}, there 
exists $\tau\in [0,T]$ such that
\begin{equation*}
  \|a(\tau)\nabla\phi(\tau)\|_{L^2}\gtrsim 1,
\end{equation*}
hence, for every $k$,
\[
\| \chi_k \tilde V_k (\tau)\tilde a_k(\tau)\|_{L^2}=\|
a_k(\tau)\nabla\phi_k (\tau)\|_{L^2}\gtrsim 1. 
\]
Using that $|\tilde a_k|^2-|u_k|^2$ is small in $L^{m+1}$ thanks to the renormalized modulated energy estimate from Proposition~\ref{prop:modulated}, we deduce that
\[
\| \chi_k \tilde V_k u_k\|_{L^2}\gtrsim 1.
\]
Using Proposition~\ref{prop:modulated} to estimate the term $\| \chi_k
((\eps_k\nabla-i\tilde V_k) u_k-\eps_k\nabla\varphi)\|_{L^2}$
(which is controlled by the renormalized kinetic term), we deduce that
\[
\| \chi_k \eps_k\nabla u_k(\tau)\|_{L^2}\gtrsim 1.
\]

Note that  there also holds 
\[
\| \chi_k \tilde V_k(t) \tilde a_k(t)\|_{L^2}\lesssim 1,\quad \forall
t\in [0,T],
\]
hence we also know that there is also an upper bound with the same order:
\[
\| \chi_k \eps_k\nabla u_k(t)\|_{L^2}\lesssim 1,\quad \forall
t\in [0,T],
\]
hence the lemma.
\end{proof}

\subsection{Higher-order Sobolev norms}
\label{sec:higher}

In this section, we fix $0<\si<2$, and prove that the homogeneous Sobolev norm
$\dot H^\si$ of the solution $u_k$ at time $\tau$, provided by
Lemma~\ref{lem:H1},  grows like $\eps_k^{-\si}$ as $k\to
+\infty$. First, in view of Lemmas~\ref{lem:loc-mass} and
\ref{lem:H1}, we have, by interpolation, for $\si>1$,
\begin{align*}
  \begin{aligned}
  1\lesssim \left\|(\eps_k\nabla)\(\chi_ku_k(\tau)\)\right\|_{L^2}&\le
  \left\|\chi_ku_k(\tau)\right\|_{L^2}^{1-1/\si}
  \left\||\eps_k\nabla|^\si\(\chi_ku_k(\tau)\)\right\|_{L^2}^{1/\si}\\
  &\lesssim
  \left\||\eps_k\nabla|^\si\(\chi_ku_k(\tau)\)\right\|_{L^2}^{1/\si}.
  \end{aligned}
\end{align*}
For the case $0<\si<1$, we invoke the following result:
\begin{lemma}[Lemma~5.1 from \cite{ACMA}]
  There exists a constant $K$ such that, for all 
$\eps\in ]0,1]$, for all $\si\in [0,1]$, 
for all $u\in H^{1}(\R^n)$ and for all $v\in W^{1,\infty}(\R^n)$,
\begin{equation*}
\| |v|^\si u\|_{L^2}\le \| |\eps D_{x}|^\si u\|_{L^2}+\| (\eps\nabla -i
v)u\|_{L^2}^{\si} 
\| u\|_{L^2}^{1-\si}+
\eps^{\si/2} K \(1+\| \nabla v\|_{L^\infty}\)\| u\|_{L^2}.
\end{equation*}
\end{lemma}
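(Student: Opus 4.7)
The idea is to reduce to the constant-coefficient case via a gauge transformation, and then treat the variable-coefficient case by freezing $v$ on a partition at scale $\sqrt{\eps}$. Both endpoints are immediate: when $\sigma=0$ the inequality becomes $\|u\|_{L^2}\le \|u\|_{L^2}$, and when $\sigma=1$ the triangle inequality combined with Plancherel gives
\[
\||v|u\|_{L^2}=\|vu\|_{L^2}\le \|(\eps\nabla-iv)u\|_{L^2}+\eps\|\nabla u\|_{L^2}=\|(\eps\nabla-iv)u\|_{L^2}+\||\eps D|u\|_{L^2},
\]
with no need for the last term. The content lies in the intermediate range.

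For constant $v$ and $\sigma\in(0,1)$, the gauge transform $w=e^{-iv\cdot x/\eps}u$ satisfies the two identities $\eps\nabla w=e^{-iv\cdot x/\eps}(\eps\nabla-iv)u$ and $\widehat u(\xi)=\widehat w(\xi-v/\eps)$. The pointwise subadditivity
\[
|v|^\sigma=|(\eps\xi+v)-\eps\xi|^\sigma\le|\eps\xi+v|^\sigma+|\eps\xi|^\sigma,
\]
valid for $\sigma\in[0,1]$, squared and integrated against $|\widehat w(\xi)|^2$, yields
\[
\||v|^\sigma u\|_{L^2}^2\lesssim \||\eps D|^\sigma u\|_{L^2}^2+\||\eps D|^\sigma w\|_{L^2}^2,
\]
and the last term is at most $\||\eps D|w\|_{L^2}^{2\sigma}\|w\|_{L^2}^{2(1-\sigma)}=\|(\eps\nabla-iv)u\|_{L^2}^{2\sigma}\|u\|_{L^2}^{2(1-\sigma)}$ by H\"older on the Fourier side. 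This proves the lemma without the error term when $\|\nabla v\|_{L^\infty}=0$.

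For variable $v$, I would choose a smooth partition of unity $\sum_j\chi_j^2\equiv 1$ with $\chi_j$ supported in a ball $B_j$ of radius $\delta=\sqrt{\eps}$ and $\|\nabla\chi_j\|_{L^\infty}\lesssim\delta^{-1}$, and apply Step~2 to each $\chi_j u$ with the frozen constant value $v(x_j)$. The Lipschitz control $|v(x)-v(x_j)|\le \delta\|\nabla v\|_{L^\infty}$ on $B_j$, combined with the H\"older bound $\bigl| |v(x)|^\sigma-|v(x_j)|^\sigma\bigr|\le |v(x)-v(x_j)|^\sigma$, produces a pointwise error of size $\eps^{\sigma/2}\|\nabla v\|^\sigma_{L^\infty}$, which after $\ell^2$-summation in $j$ contributes $\eps^{\sigma/2}\|\nabla v\|_{L^\infty}^\sigma\|u\|_{L^2}$. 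The substitution errors $(v-v(x_j))\chi_j u$ in the modulated derivative and the commutator $\eps(\nabla\chi_j)u$ from localising $\eps\nabla$ each contribute terms of order $\eps^{1/2}(1+\|\nabla v\|_{L^\infty})\|u\|_{L^2}$, and all of them fold into the stated error $\eps^{\sigma/2}K(1+\|\nabla v\|_{L^\infty})\|u\|_{L^2}$ (using $\sigma\in[0,1]$).

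The main technical obstacle is the commutator $[|\eps D|^\sigma,\chi_j]$ that arises when one tries to recombine $\sum_j\||\eps D|^\sigma(\chi_j u)\|_{L^2}^2$ into $\||\eps D|^\sigma u\|_{L^2}^2$ plus a manageable error. Since $|\eps D|^\sigma$ is a non-local Fourier multiplier with homogeneous symbol while $\chi_j$ is only Lipschitz at scale $\delta=\sqrt\eps$, naive symbolic calculus gives a commutator of order $\eps^\sigma\delta^{-\sigma}=1$, which is too large. One must exploit the orthogonality of the partition more carefully, rewriting
\[
\sum_j\langle|\eps D|^{2\sigma}(\chi_j u),\chi_j u\rangle=\langle|\eps D|^{2\sigma}u,u\rangle+\sum_j\langle[|\eps D|^{2\sigma},\chi_j]u,\chi_j u\rangle,
\]
and controlling the remainder via a Kato--Ponce-type estimate or a second-order Taylor expansion of the symbol. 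Verifying that this commutator still fits inside the single error term $\eps^{\sigma/2}K(1+\|\nabla v\|_{L^\infty})\|u\|_{L^2}$ is the crux of the argument.
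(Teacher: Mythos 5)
The paper does not prove this lemma; it cites it as Lemma~5.1 from \cite{ACMA}, so there is no in-paper argument to compare yours against. Judged on its own terms, your constant-coefficient step is correct and clean: the gauge transform $w=e^{-iv\cdot x/\eps}u$, the subadditivity $|a-b|^\sigma\le|a|^\sigma+|b|^\sigma$ for $\sigma\in[0,1]$ applied on the Fourier side, Minkowski's inequality (which keeps the constants equal to $1$), and H\"older interpolation in $\xi$ together give exactly the estimate with no error term when $v$ is constant. The partition at scale $\delta=\sqrt\eps$ is also the natural next move, and the bookkeeping of the freezing and localisation errors is plausible.

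However, the variable-coefficient part is not a proof: you explicitly identify the almost-orthogonality estimate for $\sum_j\langle[|\eps D|^{2\sigma},\chi_j]u,\chi_j u\rangle$ as ``the crux of the argument'' and then stop. That commutator bound is precisely what would have to be established, and it is nontrivial. Cauchy--Schwarz applied termwise is not enough (summing $\|\chi_j u\|_{L^2}$ over $j$ loses a large factor), so one genuinely needs the spatial localisation of the commutator kernel, and for $\sigma\in(1/2,1)$ the exponent $2\sigma$ exceeds $1$, so the elementary kernel estimate of the type proved in the paper's Appendix~\ref{sec:alternative} (which is restricted to $\alpha\in(0,1)$) does not apply directly. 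There is also an arithmetic slip in your heuristic: with $\delta=\sqrt\eps$ one has $\eps^\sigma\delta^{-\sigma}=\eps^{\sigma/2}$, not $1$, so the commutator is in fact of the expected size rather than $O(1)$. This makes the obstacle less hopeless than you suggest, but the estimate still has to be proved, and it isn't; as written, the argument is incomplete at the decisive step.
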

Applying this lemma to $u=\chi_k u_k$, $v=\nabla \phi_k$, and noticing
that $\chi_k\nabla \varphi\equiv 0$ so we may invoke
Proposition~\ref{prop:modulated}, we infer, thanks to Lemma~\ref{lem:loc-mass}:
\begin{equation}\label{eq:bound-below-sob}
  \left\||\eps_k\nabla|^\si\(\chi_ku_k(\tau)\)\right\|_{L^2}\gtrsim
  1,\quad \forall \si>0.
\end{equation}

\begin{proposition}[Norm inflation]\label{prop:H^si} Let $0<\si\leq
  2$, then, with $\tau\in [0,T]$ as in Lemma~\ref{lem:H1},
\[
\| | \eps_k\nabla|^{\si} u_k(\tau)\|_{L^2}	\gtrsim 1.
\]
\end{proposition}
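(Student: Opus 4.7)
The starting point is the localized lower bound \eqref{eq:bound-below-sob}, which already provides $\||\eps_k\nabla|^\sigma(\chi_k u_k(\tau))\|_{L^2} \gtrsim 1$ for every $\sigma>0$ (and in particular for every $\sigma\in(0,2]$). The remaining task is thus solely to remove the cutoff $\chi_k$ and pass from $\chi_k u_k$ to $u_k$.

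The plan is to write
\[
|\eps_k\nabla|^\sigma(\chi_k u_k) \;=\; \chi_k\,|\eps_k\nabla|^\sigma u_k + R_k(\tau),\qquad R_k := [\,|\eps_k\nabla|^\sigma,\chi_k\,]u_k,
\]
and, since $0\leq\chi_k\leq 1$, deduce
\[
\||\eps_k\nabla|^\sigma u_k\|_{L^2}\;\geq\;\|\chi_k|\eps_k\nabla|^\sigma u_k\|_{L^2}\;\geq\;\||\eps_k\nabla|^\sigma(\chi_k u_k)\|_{L^2}-\|R_k\|_{L^2}.
\]
In view of \eqref{eq:bound-below-sob}, the proposition reduces to proving that $\|R_k(\tau)\|_{L^2}=o(1)$ as $k\to\infty$, for every fixed $\sigma\in(0,2]$.

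For $\sigma\in\{1,2\}$, Leibniz's rule gives an explicit expression; for example at $\sigma=2$,
\[
R_k \;=\; 2\eps_k(\eps_k\nabla\chi_k)\cdot\nabla u_k + \eps_k^2(\Delta\chi_k)u_k.
\]
Because $\nabla\chi_k$ and $\Delta\chi_k$ are supported away from the initial supports of all bubbles $\jmath\ast\varphi_{\ell,k}$, one can introduce a slightly enlarged cutoff $\tilde\chi_k$ equal to $1$ on $\supp\nabla\chi_k$ and satisfying the same disjointness property as $\chi_k$. Applying Lemma~\ref{lem:loc-mass} to $\tilde\chi_k$ gives $\|\tilde\chi_k u_k\|_{L^2}\lesssim 1$, and the same argument used in Lemma~\ref{lem:H1} supplies $\|\tilde\chi_k\eps_k\nabla u_k\|_{L^2}\lesssim 1$. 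Substituting yields $\|R_k\|_{L^2}\lesssim\eps_k$, which is $o(1)$.

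The main obstacle is the case of non-integer $\sigma\in(0,2)$, since $|\eps_k\nabla|^\sigma$ is no longer a local differential operator and the naive bound $\|R_k\|_{L^2}\leq 2\||\eps_k\nabla|^\sigma\chi_k\|_{L^\infty}\|u_k\|_{L^2}$ cannot suffice: the conservation $\|u_k\|_{L^2}\lesssim h_k^{-s}$ is too weak to offset negative powers of $\eps_k$. Instead, one exploits the fact that the kernel of $[|\eps_k\nabla|^\sigma,\chi_k]$ is concentrated (up to semiclassically small tails) near $\supp\nabla\chi_k$, a region where Lemmas~\ref{lem:loc-mass} and \ref{lem:H1} provide uniform local $L^2$ and $\dot H^1_\eps$ bounds on $u_k$. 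A semiclassical Kato--Ponce / pseudodifferential commutator estimate identifies $[|\eps_k\nabla|^\sigma,\chi_k]$ as a zero-order operator with small semiclassical symbol $\O(\eps_k)$, and combining it with these local controls produces $\|R_k\|_{L^2}=o(1)$. This Fourier-analytic step is the delicate part alluded to in Section~\ref{sec:scheme}, with the appendix to furnish a Fourier-free substitute amenable to $\T^d$ and other geometries.
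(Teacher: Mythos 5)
Your reduction is correct: by \eqref{eq:bound-below-sob} it suffices to show that the commutator $R_k=[|\eps_k\nabla|^\sigma,\chi_k]u_k$ has $\|R_k(\tau)\|_{L^2}=o(1)$. The paper also handles $\sigma=2$ exactly as you do, by Leibniz plus the localized bounds of Lemmas~\ref{lem:loc-mass} and~\ref{lem:H1}. For non-integer $\sigma$, however, your argument has a genuine gap. The operator $|\eps_k\nabla|^\sigma$ is simply $\eps_k^\sigma|\nabla|^\sigma$, so the commutator is \emph{not} ``a zero-order operator with small semiclassical symbol $\O(\eps_k)$'': for a cutoff $\chi$ at fixed scale, $\|[|\eps_k\nabla|^\sigma,\chi]\|_{L^2\to L^2}=\O(\eps_k^\sigma)$, not $\O(\eps_k)$. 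Moreover its Schwartz kernel decays only like $|x-y|^{-d-\sigma}$, with no semiclassical improvement in the tails, so $[|\eps_k\nabla|^\sigma,\chi]u_k$ genuinely feels the \emph{global} $L^2$ norm of $u_k$; the local controls of Lemmas~\ref{lem:loc-mass} and~\ref{lem:H1} cannot be brought to bear directly. Combining the correct $\O(\eps_k^\sigma)$ operator bound with $\|u_k\|_{L^2}\lesssim h_k^{-s}$ (recall \eqref{eq:L2}) only yields $\|R_k\|_{L^2}=o(1)$ when $m(s_c-s)\sigma>s$, i.e.\ $\sigma>\frac{s}{m(s_c-s)}$, which misses the stated range $\sigma>\frac{s}{1+m(s_c-s)}$.

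The fix along your line of attack is exactly what the paper does in Appendix~\ref{sec:alternative}: one lets the cutoff scale $R_k$ grow with $k$, so that the commutator bound of Lemma~\ref{lem:commutator}, namely $\|[|\nabla|^\alpha,\chi_k]f\|_{L^2}\lesssim R_k^{-\alpha}\|f\|_{L^2}$, offsets $\|u_k\|_{L^2}\lesssim h_k^{-s}$ (with a separate interpolation for $1<\sigma<2$ and a further lemma verifying the bubbles can still be fitted with cutoffs at scale $h_kR_k$, a nontrivial geometric constraint especially on $\T^d$). The paper's \emph{main} proof, in Section~\ref{sec:higher}, takes a different route that avoids the growing cutoff entirely: it represents $\|\chi_k u_k\|_{\dot H^\sigma}^2$ via the finite-difference characterization of Lemma~\ref{lem:Besov}, splits the $y$-integral at a fixed $\delta$, pays $\O((\eps_k/\delta)^{2\sigma})$ for $|y|>\delta$, and applies a discrete Leibniz rule on $|y|<\delta$ in which every term where $\chi$ is ``differentiated'' is supported in a fixed neighbourhood of $\supp\nabla\chi$; those terms are then estimated by $\|\chi_1 u_k\|_{L^2}$ and $\|\chi_1 u_k\|_{\dot H^{\max(0,\sigma-1)}}$ for a slightly larger fixed cutoff $\chi_1$, quantities controlled by Lemmas~\ref{lem:loc-mass} and~\ref{lem:H1}. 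The finite-range nature of the finite-difference kernel is what genuinely localizes the error, something the nonlocal commutator kernel cannot do at fixed cutoff scale.
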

Roughly speaking, the above result consists in getting rid of the cutoff
function in \eqref{eq:bound-below-sob}. 
In the
case $\si=2$, this is direct from Leibniz formula:
\begin{align*}
  \eps_k^2\Delta \(\chi_ku_k\)=\eps_k^2 \chi_k \Delta u_k +
  2\eps_k^2\nabla\chi_k\cdot \nabla u_k+\eps_k^2 u_k\Delta \chi_k.
\end{align*}
The $L^2$-norm of the second term is $\O(\eps_k)$ from
Lemma~\ref{lem:H1}, and the last term is  $\O(\eps_k^2)$ from
Lemma~\ref{lem:loc-mass}, hence, in view of \eqref{eq:bound-below-sob},
\begin{equation*}
  \|\eps_k^2\Delta u_k(\tau)\|_{L^2}\ge \|\eps_k^2\chi_k\Delta
  u_k(\tau)\|_{L^2}+\mathcal{O}(\eps_k)\gtrsim 1. 
\end{equation*}

To prove Proposition~\ref{prop:H^si} in the case $0<\si<2$, we first
recall a characterization of the 
homogeneous Sobolev 
norms, based on the seminal work \cite{Besov61}, which then makes it
possible to easily extend the above Leibniz formula to the case of
fractional derivative.
\begin{lemma}\label{lem:Besov}
  Let $\si\in (0,2)$, $\si<d/2$. The following equivalence holds:
  \begin{equation*}
    \|u\|_{\dot H^{\si}}^2\approx \iint_{\R^{2d}}
      \left|u(x+y)+u(x-y)-2u(x)\right|^2\frac{\dd x \dd y}{|y|^{d+2\si}}.
  \end{equation*}
\end{lemma}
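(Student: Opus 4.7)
The plan is to establish the equivalence via Fourier analysis. I would first compute the Fourier transform in $x$ of the symmetric second difference $\Delta^2_y u(x):=u(x+y)+u(x-y)-2u(x)$, which reads
\[
\widehat{\Delta^2_y u}(\xi) = \bigl(e^{iy\cdot\xi}+e^{-iy\cdot\xi}-2\bigr)\hat u(\xi) = -2\bigl(1-\cos(y\cdot\xi)\bigr)\hat u(\xi).
\]
Applying Plancherel's theorem in $x$ and then Fubini's theorem (legitimate by positivity of the integrand), I obtain
\[
\iint_{\R^{2d}} \bigl|\Delta^2_y u(x)\bigr|^2 \frac{\dd x\, \dd y}{|y|^{d+2\si}} = 4\int_{\R^d}|\hat u(\xi)|^2\, \Phi(\xi)\, \dd\xi, \qquad \Phi(\xi):=\int_{\R^d}\frac{(1-\cos(y\cdot\xi))^2}{|y|^{d+2\si}}\, \dd y.
\]

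Next, I would carry out the change of variable $y=z/|\xi|$ in the inner integral. By rotational invariance of Lebesgue measure, one may assume $\xi=|\xi|e_1$, which yields $\Phi(\xi)=C_\si|\xi|^{2\si}$ with
\[
C_\si = \int_{\R^d}\frac{(1-\cos z_1)^2}{|z|^{d+2\si}}\, \dd z.
\]
Thus the whole computation collapses to
\[
\iint_{\R^{2d}}|\Delta^2_y u(x)|^2\, \frac{\dd x\, \dd y}{|y|^{d+2\si}} = 4C_\si\int_{\R^d}|\xi|^{2\si}|\hat u(\xi)|^2\, \dd\xi = 4C_\si\|u\|_{\dot H^\si}^2,
\]
and it only remains to show that the constant $C_\si$ is finite and nonzero under the assumptions on $\si$.

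The main (and essentially only) obstacle is this verification of $0<C_\si<\infty$. Near $z=0$, one has $(1-\cos z_1)^2\sim z_1^4/4$, so the integrand is comparable to $|z|^{4-d-2\si}$ in the worst directions, which is integrable precisely when $\si<2$; at infinity the integrand decays like $|z|^{-d-2\si}$, integrable precisely when $\si>0$. Both conditions are furnished by the hypothesis $\si\in(0,2)$. Strict positivity of $C_\si$ is immediate since the integrand is nonnegative and not identically zero. The remaining hypothesis $\si<d/2$ is used only to guarantee that $\dot H^\si(\R^d)$ is a genuine function space (via the embedding $\dot H^\si\hookrightarrow L^{2d/(d-2\si)}$), so that both sides of the stated equivalence are simultaneously well defined as quantities attached to the function $u$ itself, rather than an equivalence class modulo polynomials.
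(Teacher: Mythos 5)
Your proof is correct and follows essentially the same route as the paper's: Plancherel in $x$, the rotational/scaling reduction $\Phi(\xi)=C_\si|\xi|^{2\si}$, and the convergence check for $C_\si$ at $0$ and $\infty$. Your write-up is a bit more explicit about why the integral converges near $z=0$ and includes a sensible remark about the role of $\si<d/2$, but there is no substantive difference in method.
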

\begin{proof}
  From Plancherel equality in $x$, the quantity on the right hand side
  is equal to
  \begin{equation*}
     4\iint_{\R^{2d}}
      \left|\cos(y\cdot \xi)-1\right|^2|\hat u(\xi)|^2\frac{\dd\xi \dd y}{|y|^{d+2\si}}.
  \end{equation*}
By an homogeneous change of variable and a rotation, the integral in
$y$  is equal to
\begin{align*}
  \int_{\R^d}\left|\cos(y\cdot \xi)-1\right|^2\frac{
    \dd y}{|y|^{d+2\si}}&= |\xi|^{2\si}\int_{\R^d}\left|\cos\(z\cdot
    \frac{\xi}{|\xi|}\)-1\right|^2\frac{ 
    \dd z}{|z|^{d+2\si}}\\
&= |\xi|^{2\si}\int_{\R^d}\left|\cos\(z_1\)-1\right|^2\frac{ 
    \dd z}{|z|^{d+2\si}}.
\end{align*}
This quantity is finite as soon as $\si\in (0,2)$.
\end{proof}
Note that for $\delta>0$, the integral analyzed in the proof of the
lemma, when restricted to the region 
$\{|y|>\delta\}$, is controlled by
\begin{equation*}
  \int_{|y|>\delta}\left|\cos(y\cdot \xi)-1\right|^2|\frac{
    \dd y}{|y|^{d+2\si}}\le 4 \int_{|y|>\delta}\frac{
    \dd y}{|y|^{d+2\si}}\lesssim \delta^{-2\si},
   \end{equation*}
and so
\begin{equation*}
  \iint_{|y|>\delta}
      \left|u(x+y)+u(x-y)-2u(x)\right|^2\frac{\dd x \dd y}{|y|^{d+2\si}}\lesssim
      \delta^{-2\si}\|u\|_{L^2}^2. 
\end{equation*}
We apply these properties to the function $\chi_k u_k$: for $\delta>0$ to
be fixed later, we have
\begin{align*}
  \eps_k^{2\si} \|\chi_k u_k\|_{\dot H^{\si}}^2&\lesssim
  \(\frac{\eps_k}{\delta}\)^{2\si}\|\chi_k u_k\|_{L^2}^2 \\
+  \eps_k^{2\si}\iint_{|y|<\delta}&
      \left|(\chi_k u_k)(x+y)+(\chi_k u_k)(x-y)-2(\chi _ku_k)(x)\right|^2\frac{\dd x\dd y}{|y|^{d+2\si}}.
\end{align*}
We leave out the index $k$ in order to lighten notations. 
For the last integral, we use a discrete form of Leibniz formula:
\begin{align*}
  (\chi u)(x+y)&+(\chi u)(x-y)-2(\chi u)(x)=\\
&\quad\chi(x)\( u(x+y)+u(x-y)-2u(x)\)\\
&+\(\chi(x+y)-\chi(x)\)\(u(x+y)-u(x)\) \\
&+ \(\chi(x)-\chi(x-y)\)\(u(x)-u(x-y)\) \\
&+u(x)\(\chi(x+y)+\chi(x-y)-2\chi(x)\). 
\end{align*}
The integral corresponding to the first term on the right hand side is
obviously estimated by
\begin{equation*}
  \|\chi\|_{L^\infty}^2 \|u\|_{\dot H^{\si}}^2.
\end{equation*}
The second and third terms are similar, and the corresponding
integrals are actually equal, through the change of variable $y\mapsto
-y$. We choose $\delta>0$ (independent of $k$) such that 
\begin{equation*}
  \forall \ell\not = k,\quad \(\operatorname{supp}\chi
  +B(0,3\delta)\)\cap \operatorname{supp}a_{\ell,k} = \emptyset,
\end{equation*}
and we pick another cut-off function $\chi_1\in
C_0^\infty(\R^d;[0,1])$ such that $\chi_1\equiv 1$ on $\operatorname{supp}\chi
  +B(0,\delta)$, and 
  \begin{equation*}
    \operatorname{supp}\chi_1\subset \operatorname{supp}\chi
  +B(0,3\delta).
  \end{equation*}
We thus have
\begin{multline*}
  \(\chi(x+y)-\chi(x)\)\(u(x+y)-u(x)\)\\ =
  \(\chi(x+y)-\chi(x)\)\((\chi_1u)(x+y)-(\chi_1 u)(x)\).
\end{multline*}
We infer
\begin{align*}
  \iint_{|y|<\delta}&
      \left|\(\chi(x+y)-\chi(x)\)\(u(x+y)-u(x)\)
  \right|^2\frac{\dd x\dd y}{|y|^{d+2\si}} \\
& \lesssim \|\nabla
  \chi\|_{L^\infty}^2 \iint_{|y|<\delta}
      \left|(\chi_1 u)(x+y)-(\chi_1 u)(x)
  \right|^2\frac{\dd x\dd y}{|y|^{d+2\si-2}}\\
& \lesssim \|\nabla
  \chi\|_{L^\infty}^2 \|\chi_1 u\|_{\dot H^{\max(0,\si-1)}}^2,
\end{align*}
where we have used the more standard characterization of the $\dot
H^{\si-1}$-norm 
(note that $\si-1<1$)
analogous to the one given in Lemma~\ref{lem:Besov}. 

Finally, the last integral is equal to 
\begin{align*}
    \iint_{|y|<\delta}|(\chi_1
  u)(x)|^2\left|\chi(x+y)+\chi(x-y)-2\chi(x)\right|^2
  \frac{\dd x\dd y}{|y|^{d+2\si}} \lesssim \|\chi\|_{W^{2,\infty}}^2 \|\chi_1
  u\|_{L^2}^2,
\end{align*}
since, for $\delta\le 1$, 
\begin{equation*}
  \int_{|y|<\delta}  \frac{\dd y}{|y|^{d+2\si}} \lesssim \int_0^1
  \frac{r^{d-1}}{r^{d-2\si}}\dd r<\infty. 
\end{equation*}
We conclude:
\begin{equation*}
  1\lesssim  \eps^{2\si} \|\chi u(\tau)\|_{\dot H^{\si}}^2\lesssim
 \eps^{2\si} \|u(\tau)\|_{\dot H^{\si}}^2 + \underbrace{\eps^{2\si}\(\|\chi_1
 u(\tau)\|_{L^2}^2 + \|\chi_1 u(\tau)\|_{\dot H^{\max(0,\si-1)}}^2\)}_{=\O(\eps^{2\si})},
\end{equation*}
hence 
\begin{equation*}
  \eps^{2\si} \|u\|_{\dot H^{\si}}^2 \gtrsim 1,\quad \forall \si\in (0,2).
\end{equation*}

\subsection{Conclusion}
\label{sec:conclusion}

We can now go back to the original function
\color{black}
\[
\psi_k(t,x)=h_k^{s-\frac d2}\frac{1}{|\log(h_k)|} u_k\left(\frac{t}{\eps_k h_k^2},\frac{x}{h_k}\right).
\]
\color{black}
This function satisfies 
\[
\||\nabla|^{\si}\psi_k(\eps_kh_k^2 t)\|_{L^2}=h_k^{s-\si}|\log(h_k)|^{-1}\||\nabla|^{\si} u_k(t)\|_{L^2}.
\]
Let $t_k= \eps_kh_k^2\tau$, where $\tau$ stems from
Lemma~\ref{lem:H1}: $t_k\to 0$ as $k\to +\infty$. 
In view of Proposition~\ref{prop:H^si}, we know that when $0<\si<2$,
\[
\||\nabla|^{\si}\psi_k(t_k)\|_{L^2}
	\gtrsim h_k^{s-\si}\eps_k^{-\si}|\log(h_k)|^{-1}.
\]
Given the choice $\eps_k=h_k^{m(s_c-s)}|\log(h_k)|^{m}$, the lower bound goes to infinity as $k$ goes to infinity as soon as
\[
s<\si(1+m(s_c-s)).
\]

Finally, the density of initial data $f_0$ in the pathological set is a direct consequence of~\cite[Proposition 2.10]{CaGa23}.

\subsection*{Acknowledgements} The authors are grateful to Nikolay
Tzvetkov for suggesting to reconsider the result of \cite{ACMA} in the light of
\cite{CaGa23}, to Thomas Alazard for the arguments presented in
Section~\ref{sec:higher}, and also to Nicolas Camps for ideas leading to Appendix~\ref{sec:alternative}. 

\appendix

\section{Alternative proof of localized estimates}
\label{sec:alternative}

In this appendix, we provide an alternative proof of
Proposition~\ref{prop:H^si}. It has the drawback of being longer than
in Section~\ref{sec:higher}, but the advantage of being more flexible, and easily
adapted to the case of geometries different from $\R^d$, typically
$\T^d$. 

\subsection{Tuning the size of cutoff functions}

We provide a proof that relies on the sizes of the cutoff $R_k$
around the bubble $k$, as introduced in Section~\ref{sec:local}. We
first show that up to some conditions on  $R_k$, norm
inflation occurs. In a second step, we prove that such conditions on
$R_k$ can be met
even on $\T^d$. 

\begin{lemma}[Size of the cutoff functions] Let $0<\si\leq 2$. There exists $C,C'>0$ such that for every $t\lesssim 1$ and $k\geq 1$, assuming that
\[
\begin{cases}
R_k^{\si}\geq C'h_k^{-s}\eps_k^{\si} &\quad \text{if} \quad {0<\si <1}, 
\\
R_k^{\si-1}\geq C'h_k^{1-s}\eps_k^{\si}h_{k-1}^{(m+1)(s_{\rm sob}-s)} &\quad \text{if}\quad  {1<\si < 2} ,
\end{cases}
\]
then there holds
\[
\| | \eps_k\nabla|^{\si} u_k\|_{L^2}	\gtrsim 1.
\]
\end{lemma}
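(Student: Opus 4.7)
The starting point is the bound
\[
\|\,|\eps_k\nabla|^{\si}(\chi_k u_k)\,\|_{L^2}\gtrsim 1,
\]
which is \eqref{eq:bound-below-sob} and was already established in Section~\ref{sec:higher} independently of the scale $R_k$. To turn this into a lower bound on the global Sobolev norm, the plan is to commute $|\eps_k\nabla|^{\si}$ past $\chi_k$:
\[
|\eps_k\nabla|^{\si}(\chi_k u_k)=\chi_k\,|\eps_k\nabla|^{\si} u_k+[|\eps_k\nabla|^{\si},\chi_k]u_k.
\]
Since $\|\chi_k f\|_{L^2}\le\|f\|_{L^2}$, it suffices to show that the size assumptions on $R_k$ force
\[
\|[|\eps_k\nabla|^{\si},\chi_k]u_k\|_{L^2}\le \tfrac12.
\]

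To estimate this commutator without resorting to the Fourier transform (the whole point of this alternative proof being to remain valid on $\T^d$), I would rely on the classical singular integral representation of the fractional Laplacian: for $\si\in(0,1)$,
\[
(-\Delta)^{\si/2}u(x)=c_\si\,\mathrm{P.V.}\int\frac{u(x)-u(y)}{|x-y|^{d+\si}}\,dy,
\]
and for $\si\in(1,2)$ the analogous second-difference formula already used in Section~\ref{sec:higher}. These identities make the commutator explicit as an integral operator.

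For the case $0<\si<1$, a direct computation yields
\[
[|\nabla|^{\si},\chi_k]u(x)=c_\si\int\frac{(\chi_k(x)-\chi_k(y))\,u(y)}{|x-y|^{d+\si}}\,dy.
\]
Splitting the integral at $|x-y|\sim R_k$ and using $|\chi_k(x)-\chi_k(y)|\lesssim\min(R_k^{-1}|x-y|,1)$, a Schur-type argument gives the $L^2$-bound $R_k^{-\si}\|u\|_{L^2}$. Rescaling and using $\|u_k\|_{L^2}\lesssim h_k^{-s}$ from \eqref{eq:L2} produces
\[
\|[|\eps_k\nabla|^{\si},\chi_k]u_k\|_{L^2}\lesssim (\eps_k/R_k)^{\si}\,h_k^{-s},
\]
which is $\le\tfrac12$ under the first size assumption provided $C'$ is large enough.

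For the case $1<\si<2$, I would write $|\eps_k\nabla|^{\si}=|\eps_k\nabla|^{\si-1}\circ(\eps_k\nabla)$ and apply the ordinary Leibniz rule $\eps_k\nabla(\chi_k u_k)=\chi_k\eps_k\nabla u_k+\eps_k u_k\nabla\chi_k$. This reduces the matter, on the one hand, to a commutator of order $\si-1\in(0,1)$ applied to $\eps_k\nabla u_k$, which is handled by the previous case and contributes at most
\[
(\eps_k/R_k)^{\si-1}\,\|\eps_k\nabla u_k\|_{L^2},
\]
and, on the other hand, to a term of the form $|\eps_k\nabla|^{\si-1}(\eps_k u_k\nabla\chi_k)$, which is treated by combining the previous case with $\|\nabla\chi_k\|_{L^\infty}\lesssim R_k^{-1}$ and \eqref{eq:L2}. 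The semiclassical energy bound \eqref{eq:H1} provides
\[
\|\eps_k\nabla u_k\|_{L^2}\lesssim 1+(h_k/h_{k-1})^{(m+1)(s_{\rm sob}-s)}|\log h_k|^{1/2},
\]
and the algebraic identity $h_k^{1-s}\eps_k\sim h_k^{(m+1)(s_{\rm sob}-s)}$ (already used in Section~\ref{sec:renormalized}) shows that the second size assumption on $R_k$ precisely balances these two contributions and yields $\le\tfrac12$.

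The main technical obstacle lies in the range $1<\si<2$, since there $\|\eps_k\nabla u_k\|_{L^2}$ is not a priori bounded and can grow with $k$ via the semiclassical energy, so the commutator error must be balanced carefully against the scale $R_k$; this is exactly why the second size condition carries the additional factor $h_{k-1}^{(m+1)(s_{\rm sob}-s)}$ absent in the first. The endpoint value $\si=2$ is already settled by the direct Leibniz computation of Section~\ref{sec:higher}, and $\si=1$ follows by interpolation between the $L^2$ and the $\dot H^{1+\delta}$ bounds.
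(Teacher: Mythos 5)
Your overall strategy matches the paper's proof almost step by step: start from the localized lower bound~\eqref{eq:bound-below-sob}, peel off the cutoff $\chi_k$ by commuting $|\eps_k\nabla|^\si$ past it, estimate the commutator via the singular-integral kernel of $|\nabla|^\alpha$ (this is exactly Lemma~\ref{lem:commutator}), and, for $1<\si<2$, factor $|\eps_k\nabla|^\si=|\eps_k\nabla|^{\si-1}\circ(\eps_k\nabla)$ and apply Leibniz. The case $0<\si<1$ and the commutator term on $\eps_k\nabla u_k$ in the case $1<\si<2$ are treated exactly as in the paper.

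There is, however, a gap in your handling of the remaining term $|\eps_k\nabla|^{\si-1}(\eps_k u_k\nabla\chi_k)$. You propose to bound it ``by combining the previous case with $\|\nabla\chi_k\|_{L^\infty}\lesssim R_k^{-1}$ and \eqref{eq:L2}'', but there is no commutator structure here, and invoking the \emph{global} bound $\|u_k\|_{L^2}\lesssim h_k^{-s}$ is wasteful in exactly the place where it matters. The paper instead estimates this term by the interpolation inequality
\[
\||\nabla|^{\si-1}(\nabla\chi_k\, u_k)\|_{L^2}\lesssim\|\nabla\chi_k\, u_k\|_{L^2}^{2-\si}\,\|\nabla(\nabla\chi_k\, u_k)\|_{L^2}^{\si-1},
\]
and then, crucially, bounds the two factors on the right by the \emph{localized} mass and $\dot H^1$ estimates of Lemmas~\ref{lem:loc-mass} and~\ref{lem:H1} applied with a cutoff $\tilde\chi_k\geq|\nabla\chi_k|$ supported away from the other bubbles. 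This gives $\|\nabla\chi_k\,u_k\|_{L^2}\lesssim 1$ and $\|\nabla(\nabla\chi_k\,u_k)\|_{L^2}\lesssim\eps_k^{-1}$, so the resulting contribution is $O(\eps_k^{-(\si-1)})\ll\eps_k^{-\si}$ and is \emph{automatically} negligible, with no constraint on $R_k$ coming from this term at all. If you substitute~\eqref{eq:L2} instead, you pick up a spurious $h_k^{-s}$ factor, and it is not clear—nor do you verify—that the size assumption on $R_k$ still absorbs it; that assumption was derived only to control the genuine commutator term involving $\|\nabla u_k\|_{L^2}$ and the energy bound~\eqref{eq:H1}. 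So you should replace the appeal to~\eqref{eq:L2} for this term by the interpolation-plus-localized-estimates argument, which is precisely the point where the zero speed of propagation of the approximate bubbles is being exploited.

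Two smaller remarks: the lower bound~\eqref{eq:bound-below-sob} has an implicit constant, so ``$\le\tfrac12$'' should read ``$\le$ half that constant'', adjustable through $C'$; and the endpoint $\si=1$ is already Lemma~\ref{lem:H1} directly, so no further interpolation is needed there.
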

The conclusion of the above lemma corresponds to that of
Proposition~\ref{prop:H^si}, and from that the proof of norm inflation
is inferred like in
Section~\ref{sec:conclusion}. 
\begin{proof}
  Recall that in view of \eqref{eq:bound-below-sob},
  \begin{equation*}
  \left\||\eps_k\nabla|^\si\(\chi_ku_k(\tau)\)\right\|_{L^2}\gtrsim
  1,\quad \forall \si>0.
\end{equation*}
We first treat the case $0<\si<1$, and write
\begin{equation*}
| \nabla|^{\si}(\chi_k u_k)
	=[| \nabla|^{\si}, \chi_k] u_k +\chi_k |\nabla|^{\si}u_k.
\end{equation*}
Using the commutator estimate recalled in Lemma~\ref{lem:commutator} below,
\[
\|[| \nabla|^{\si}, \chi_k]  u_k\|_{L^2}
	\lesssim \frac{\| u_k\|_{L^2}}{R_k^{\si}}.
\]
Given the non-localized estimate~\eqref{eq:L2} on  the $L^2$-norm of
$u_k$, the upper bound is less than $\frac{\eps_k^{-\si}}{2 C}$ as
soon as for some $C'>0$, we have
\[
R_k^{\si }\geq C' h_k^{-s}\eps_k^{\si}.
\] 
In this case we conclude that
\[
\|\chi_k | \nabla|^{\si}u_k(\tau)\|_{L^2}
	\gtrsim  \eps_k^{-\sigma}.
\]
In the case $1<\si<2$,
we expand
\begin{align*}
| \nabla|^{\si}(\chi_k u_k)
	&=| \nabla|^{\si-1}(\nabla \chi_k u_k+\chi_k\nabla u_k)\\
	&=| \nabla|^{\si-1}(\nabla \chi_k u_k) +[| \nabla|^{\si-1}, \chi_k] \nabla u_k +\chi_k |\nabla|^{\si}u_k.
\end{align*}
Using interpolation, we have
\[
\| | \nabla|^{\si-1}(\nabla \chi_k u_k)\|_{L^2}
	\lesssim \| \nabla \chi_k u_k\|_{L^2}^{2-\si}
\| \nabla(\nabla \chi_k u_k)\|_{L^2}^{\si-1}.
\]
Thanks to Lemmas~\ref{lem:loc-mass} and~\ref{lem:H1} applied to the cutoff function $\nabla \chi_k$, this term is uniformly bounded in $k$ when $t\in[0,T]$.
Concerning the second term in the expansion, we apply the commutator estimate from Lemma~\ref{lem:commutator} with $\alpha=\si-1$ and get that
\[
\|[|\nabla|^{\si-1},\nabla\chi_k]\nabla u_k\|_{L^2}
	\lesssim \frac{\|\nabla u_k\|_{L^2}}{R_k^{\si-1}}.
\]
As a consequence of the energy estimate~\eqref{eq:H1}, if 
\[
R_k^{\si-1}\geq C' h_k^{1-s}\eps_k^{\si}h_{k-1}^{(m+1)(s_{\rm sob}-s)},
\]
for some $C'>0$ large enough, we get that
\[
\|\chi_k |\nabla|^{\si}u_k(\tau)\|_{L^2} \gtrsim \eps_k^{-\si}.
\]
In the case $\si=2$, the argument presented in
Section~\ref{sec:higher}, based on Leibniz formula and localized
estimates in $L^2$ and $\dot H^1$, respectively, needs no modification.
\end{proof}

We now check that the condition on the size of $R_k$ from the above
lemma can be realized with a suitable choice of positions
$\frac{x_k}{h_k}$ for the bubbles.  

\begin{lemma}[Fitting the bubbles on the torus]
One can fix $x_k$ such that there exists $R_k$ satisfying $\overline{B}(\frac{x_k}{h_k},R_k) \cap \operatorname{supp} (\jmath\ast \varphi_{\ell,k})=\emptyset$ for $\ell\neq k$ and
\[
\begin{cases}
R_k^{\si}\geq C'h_k^{-s}\eps_k^{\si} &\quad \text{if} \quad {0<\si <1}, 
\\
R_k^{\si-1}\geq C'h_k^{1-s}\eps_k^{\si}h_{k-1}^{(m+1)(s_{\rm sob}-s)} &\quad \text{if}\quad  {1<\si < 2}.
\end{cases}
\]
\end{lemma}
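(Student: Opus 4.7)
The plan is to reduce both size constraints on $R_k$ to a lower bound on the \emph{physical} buffer $\rho_k := R_k h_k$ around each $x_k$, and then to place the $x_k$ along a straight line with consecutive gaps slightly above $\rho_k + \rho_{k+1} + 2r_1(h_k + h_{k+1})$. The whole construction fits inside $B(0, r_0)$ thanks to super-geometric summability of $\rho_k$, which itself is essentially the norm-inflation threshold $\si > s/(1+m(s_c-s))$.

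The first step is algebraic. Using $\eps_k = h_k^{m(s_c-s)}|\log h_k|^m$ together with the scale relation $h_{k-1} = h_k^{1/M}$, the two size conditions translate into $\rho_k \geq c\,h_k^{\alpha_\si}|\log h_k|^{\beta_\si}$ with
\[
\alpha_\si = \frac{\si(1+m(s_c-s)) - s}{\si}\quad \text{if } 0<\si<1,
\]
\[
\alpha_\si = \frac{\si(1+m(s_c-s)) - s}{\si-1} + \frac{(m+1)(s_{\rm sob}-s)}{M(\si-1)} \quad \text{if } 1<\si<2.
\]
The main numerator $\si(1+m(s_c-s)) - s$ is strictly positive exactly under the norm-inflation threshold. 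In the second case, if $s > s_{\rm sob}$ the residual term is negative, but of order $1/M$, hence absorbed by the leading term once $M$ is taken large enough. Since $h_k = e^{-M^k}$, this gives super-geometric decay $\rho_k \leq h_k^{\alpha_\si/2}$ for $k \geq k_0$ large, and in particular $\sum_k \rho_k < \infty$.

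For the placement, fix $k_0$ large, choose $x_{k_0}\in B(0, r_0/2)$ and a unit direction $e$, and set inductively $x_{k+1} := x_k + d_{k+1}\,e$ with $d_{k+1} := \rho_k + \rho_{k+1} + 2r_1(h_k + h_{k+1}) + 2^{-k-1}$ (the last summand just enforces strict inequalities). Monotone placement on a line gives $|x_k - x_\ell| \geq d_{\min(k,\ell)+1}$; by symmetry we may assume $\ell < k$, so $|x_k - x_\ell| \geq d_{\ell+1} > \rho_\ell + 2r_1 h_\ell$, which exceeds both $\rho_k + 2r_1 h_\ell$ and $\rho_\ell + 2r_1 h_k$ since $\rho_\ell \geq \rho_k$ and $h_\ell \geq h_k$. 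Rescaling by $h_k$ turns this into the disjointness $\overline{B}(x_k/h_k,R_k)\cap\operatorname{supp}(\jmath\ast\varphi_{\ell,k})=\emptyset$ claimed in the lemma. Summability of $d_k$ ensures $\sum_{k \geq k_0}d_{k+1} < r_0/2$ for $k_0$ large, so all $x_k$ remain in $B(0, r_0) \subset \R^d$; on $\T^d$ the same argument works since $d_{k+1}$ is eventually far below the injectivity radius.

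The main obstacle is the algebraic verification when $1 < \si < 2$: the cross-scale factor $h_{k-1}^{(m+1)(s_{\rm sob}-s)}$ in the lower bound for $R_k$ couples consecutive scales and, in the range $s_{\rm sob} < s < s_c$, contributes with the wrong sign to $\alpha_\si$. This negative contribution is only absorbed by choosing $M$ sufficiently large, which provides an a posteriori reason to fix $M$ large in the scale sequence $h_k = e^{-M^k}$ from Section~\ref{sec:def-set}.
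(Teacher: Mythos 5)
Your argument follows the same route as the paper's: reduce both size conditions to a super-geometrically decaying lower bound on the physical buffer $\rho_k = R_k h_k$ (equivalently, the paper's $h_kR_k\lesssim h_k^\delta$ condition with $\delta>0$ small), exploit the decay $h_k=e^{-M^k}$ for summability, and place the centres accordingly. You are somewhat more careful than the paper in explicitly tracking the cross-scale factor $h_{k-1}^{(m+1)(s_{\rm sob}-s)}$ — whose negative contribution to the exponent when $s>s_{\rm sob}$ is of size $O(1/M)$ and absorbed by taking $M$ large — a step the paper's proof passes over without comment, and you also write out the linear placement of the $x_k$ and the disjointness verification which the paper leaves implicit.
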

Under these conditions, it is possible to construct a cutoff $\chi_k$ of radius $R_k$ around the $k$-th bubble located around position $\frac{x_k}{h_k}$ in the rescaled variables for $u_k$.

\begin{proof}
We recall that in the original variables, we have
\[
\psi_k(t,x)=h_k^{s-\frac d2}|\log(h_k)|^{-1}u\left(\frac{t}{\eps_k h_k^2},\frac{x}{h_k}\right).
\]
In the original variables, the cutoff $\chi_k$ corresponds to a cutoff of size $h_k R_k$ around position $x_k$. Moreover, $h_k=e^{-M^k}$ for some parameter $M>1$. As a consequence, it is possible to fit all the bubbles in the torus if there exists $\delta>0$ such that
\[
h_kR_k\lesssim h_k^{\delta}.
\]

In the case $0<\si<1$, we combine this condition with the lower bound and $R_k$ and we deduce that this is possible if
\[
h_k^{\sigma(\delta-1)}\gtrsim  c \eps_k^{\si}h_k^{-s}.
\]
Given the formula $\eps_k=h_k^{m(s_c-s)}|\log(h_k)|^{m}$, it is enough that
\[
\sigma(\delta-1)<\sigma m(s_c-s)-s,
\]
that is,
\[\sigma(1+m(s_c-s)-\delta)>s.
\]
 We check that when 
\[
\sigma(1+m(s_c-s))>s,
\]
then it is always possible to chose $\delta>0$ small enough so that this condition is satisfied.

In the case $1<\si\leq 2$, this condition is compatible with the lower bound on $R_k$ if
\[
h_k^{(\sigma-1)(\delta-1)}\gtrsim c \eps_k^{\si}h_k^{1-s} h_{k-1}^{(m+1)(s_{\rm sob}-s)}.
\]
Given the formula for $\eps_k$, it is enough that
\[
(\sigma-1)(\delta-1)<\sigma m(s_c-s)+1-s,
\]
that is
\[
\sigma(1+m(s_c-s)-\delta)>s-\delta.
\]
We check that when 
\[
\sigma(1+m(s_c-s))>s,
\] then it is always possible to chose $\delta>0$ small enough so that this condition is satisfied.
\end{proof}

\subsection{Commutator estimate}

\begin{lemma}[Commutator estimate]\label{lem:commutator} For every $0<\alpha<1$, there holds
\[
\|[|\nabla|^{\alpha},\chi_k]f\|_{L^2}
	\lesssim \frac{\|\chi\|_{W^{1,\infty}}}{R_k^{\alpha}}\|f\|_{L^2}.
\]
\end{lemma}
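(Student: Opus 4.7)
The plan is to reduce the estimate to a Schur test on the singular integral kernel of the commutator. For $0<\alpha<1$, the fractional Laplacian admits the pointwise representation
\begin{equation*}
|\nabla|^{\alpha} f(x) = c_{d,\alpha}\,\mathrm{p.v.}\int_{\R^d}\frac{f(x)-f(y)}{|x-y|^{d+\alpha}}\,\mathrm{d}y,
\end{equation*}
so a direct computation gives
\begin{equation*}
[|\nabla|^{\alpha},\chi_k]f(x)=c_{d,\alpha}\,\mathrm{p.v.}\int_{\R^d}\frac{\chi_k(x)-\chi_k(y)}{|x-y|^{d+\alpha}}\,f(y)\,\mathrm{d}y.
\end{equation*}
The commutator is therefore an integral operator with kernel $K(x,y)=(\chi_k(x)-\chi_k(y))/|x-y|^{d+\alpha}$, and the claim will follow from Schur's test once we show
\begin{equation*}
\sup_{x}\int_{\R^d}|K(x,y)|\,\mathrm{d}y+\sup_{y}\int_{\R^d}|K(x,y)|\,\mathrm{d}x\lesssim \frac{\|\chi\|_{W^{1,\infty}}}{R_k^{\alpha}}.
\end{equation*}
By symmetry it suffices to bound one of the two.

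The kernel estimate is carried out by splitting $\R^d$ into the near region $\{|x-y|<R_k\}$ and the far region $\{|x-y|\ge R_k\}$, these being the natural scales dictated by $\chi_k(\cdot)=\chi((\cdot-x_k)/R_k)$, which satisfies $\|\nabla\chi_k\|_{L^\infty}\le R_k^{-1}\|\nabla\chi\|_{L^\infty}$. On the near region, the mean-value inequality gives $|\chi_k(x)-\chi_k(y)|\le R_k^{-1}\|\nabla\chi\|_{L^\infty}|x-y|$, hence
\begin{equation*}
\int_{|x-y|<R_k}|K(x,y)|\,\mathrm{d}y\lesssim \frac{\|\nabla\chi\|_{L^\infty}}{R_k}\int_0^{R_k}r^{-\alpha}\,\mathrm{d}r\lesssim \frac{\|\nabla\chi\|_{L^\infty}}{(1-\alpha)\,R_k^{\alpha}}.
\end{equation*}
On the far region, we use the cruder bound $|\chi_k(x)-\chi_k(y)|\le 2\|\chi\|_{L^\infty}$, so
\begin{equation*}
\int_{|x-y|\ge R_k}|K(x,y)|\,\mathrm{d}y\lesssim \|\chi\|_{L^\infty}\int_{R_k}^{\infty}r^{-1-\alpha}\,\mathrm{d}r=\frac{\|\chi\|_{L^\infty}}{\alpha\,R_k^{\alpha}}.
\end{equation*}
Adding the two contributions yields the required Schur bound, and the same argument applies with $x$ and $y$ swapped.

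Neither step is truly delicate; the only care is to track the $R_k$-scaling through the mean-value and tail estimates so that the combined output is $\|\chi\|_{W^{1,\infty}}R_k^{-\alpha}$, with constants depending only on $d$ and $\alpha$ (and blowing up harmlessly as $\alpha\to 0^+$ or $\alpha\to 1^-$, which is irrelevant since $\alpha$ is fixed in $(0,1)$). Schur's test then turns the kernel estimate directly into the $L^2\to L^2$ bound stated in the lemma.
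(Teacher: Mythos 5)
Your proof is correct and follows essentially the same route as the paper's: the same pointwise integral representation of $|\nabla|^{\alpha}$, the same kernel formula for the commutator, the same split at scale $R_k$, and the same Lipschitz/uniform bounds on the two regions. The only cosmetic difference is that the paper closes the argument via Young's convolution inequality (after bounding the kernel by a convolution kernel), whereas you invoke the Schur test directly; for this kernel the two are interchangeable and yield the identical $R_k^{-\alpha}\|\chi\|_{W^{1,\infty}}$ bound.
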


We give an elementary proof of this result, which is
adapted from~\cite[Lemma~D.1]{GLPR18}. 
\begin{proof}
The operator $|\nabla|^{\alpha}$ is the convolution operator with the tempered distribution $\kappa=\mathcal{F}^{-1}(|\xi|^{\alpha})$, which is homogeneous of degree $-\alpha-d$ and even. We deduce that there exists a universal constant $c$ such that
\[
|\nabla|^{\alpha}f(x)
	=\kappa\ast f(x)=c\int_{\R^d}\frac{f(y)-f(x)}{|y-x|^{\alpha+d}}\dd y.
\] 
As a consequence,
\[
[|\nabla|^{\alpha},\chi_k]f
	=c\int_{\R^d}\frac{\chi_k(y)-\chi_k(x)}{|y-x|^{\alpha+d}}f(y)\dd y.
\]
We split the integral between the parts $|y-x|\leq R_k$ and $|y-x|>R_k$. First, using the fact that $\chi_k$ is Lipschitz continuous, $\frac{|\chi_k(y)-\chi_k(x)|}{|y-x|^{\alpha+d}} \leq \frac{\|\nabla \chi_k\|_{L^{\infty}}}{|y-x|^{\alpha+d-1}}$ so that
\begin{align*}
\left\|\int_{|y-x|\leq R_k}\frac{\chi_k(y)-\chi_k(x)}{|y-x|^{\alpha+d}}f(y)\dd y\right\|_{L^2_x}
  &\lesssim \|\nabla \chi_k\|_{L^{\infty}}
    \left\|\frac{\mathbf{1}_{|x|\leq R_k}}{|x|^{\alpha+d-1}}\ast f\right\|_{L^2}\\
  &\lesssim\frac{\|\nabla \chi\|_{L^{\infty}}}{R_k}
    \left\|\frac{\mathbf{1}_{|x|\leq R_k}}{|x|^{\alpha+d-1}}\right\|_{L^1}\| f\|_{L^2}\\
  &\lesssim\frac{\|\nabla \chi\|_{L^{\infty}}}{R_k^{\alpha}}
    \left\|\frac{\mathbf{1}_{|x|\leq 1}}{|x|^{\alpha+d-1}}\right\|_{L^1}\| f\|_{L^2}.
\end{align*}
The term $\|\frac{\mathbf{1}_{|x|\leq 1}}{|x|^{\alpha+d-1}}\|_{L^1}$ is finite since $\alpha+d-1<d$.
Then, we have
\begin{align*}
\left\|\int_{|y-x|> R_k}\frac{\chi_k(y)-\chi_k(x)}{|y-x|^{\alpha+d}}f(y)\dd y\right\|_{L^2_x}
  &\lesssim \| \chi_k\|_{L^{\infty}}
    \left\|\frac{\mathbf{1}_{|x|> R_k}}{|x|^{\alpha+d}}\ast f\right\|_{L^2}\\
  &\lesssim \| \chi\|_{L^{\infty}}
    \left\|\frac{\mathbf{1}_{|x|> R_k}}{|x|^{\alpha+d}}\right\|_{L^1}\| f\|_{L^2}\\
  &\lesssim \frac{\| \chi\|_{L^{\infty}}}{R_k^{\alpha}}
    \left\|\frac{\mathbf{1}_{|x|> 1}}{|x|^{\alpha+d}}\right\|_{L^1}\| f\|_{L^2}.
\end{align*}
The $L^1$-norm is finite since $\alpha+d>d$.
\end{proof}

\color{black}
\section{Proof of Theorem~\ref{thm:0}}\label{sec:th0}

As mentioned in the introduction, the proof of Theorem~\ref{thm:main}
is readily adapted in order to prove Theorem~\ref{thm:0}. Let $f_0\in
H^s(\R^d)$: first, there exists a sequence $g_{0,k}\in \mathcal C_c^\infty
(R^d)$ such that
\begin{equation*}
  \|f_0-g_{0,k}\|_{H^s}\Tend k \infty 0. 
\end{equation*}
We then complement the initial datum $g_{0,k}$ with a single bubble,
\begin{equation*}
  f_{0,k} = g_{0,k} + \varphi_k,
\end{equation*}
where $\varphi_k$ is like in the rest of the paper, as introduced in
Section~\ref{sec:def-set}, with the requirement that for any $k$, the
supports of $g_{0,k}$ and $\varphi_k$ are disjoint. In view of the
logarithmic factor $|\log(h_k)|^{-1}$ in \eqref{eq:def-bubble},
\begin{equation*}
  \|f_0-f_{0,k}\|_{H^s}\le
  \|f_0-g_{0,k}\|_{H^s}+\|\varphi_k\|_{H^k}\Tend k \infty 0.  
\end{equation*}
The proof of
Theorem~\ref{thm:main} can then be repeated, by rescaling the unknown
function as in Section~\ref{sec:rescale}: the initial datum $u_{0,k}$
is the sum of the profile $\alpha$ (possibly shifted in space), and a unique
``low mode'' stemming from $g_{0,k}$. The modulated energy estimate
from Section~\ref{sec:renormalized} remains valid (note that the low
mode stemming from $g_{0,k}$ must be incorporated into the
renormalized modulated energy, in the spirit of
the proof of \cite[Theorem1.33]{Tz19}, and as in
\cite{SuTz20,CaGa23}), and we conclude like in Section~\ref{sec:local}.

\color{black}
\bibliographystyle{amsplain}
\bibliography{loss}

\end{document}